\newtheorem{theorem}{Theorem}[section]
\newtheorem{corollary}[theorem]{Corollary}
\newtheorem{lemma}[theorem]{Lemma}
\newtheorem{definition}[theorem]{Definition}
\newtheorem{remark}[theorem]{Remark}
\def\cD{\mathcal{D}}
\def\cF{\mathcal{F}}
\def\bD{\mathbb{D}}
\def\bN{\mathbb{N}}
\def\bR{\mathbb{R}}
\def\e{\varepsilon}
\begin{document}

\title{Weak convergence and tightness of probability measures in an abstract Skorohod space}

\author{Raluca M. Balan\footnote{Corresponding author. University of Ottawa, Department of Mathematics and Statistics,
STEM Building, 150 Louis-Pasteur Private, Ottawa, ON, K1N 6N5, Canada. E-mail
address: rbalan@uottawa.ca} \footnote{Research supported by a
grant from the Natural Sciences and Engineering Research Council
of Canada.}\and
Becem Saidani\footnote{University of Ottawa, Department of Mathematics and Statistics,
STEM Building, 150 Louis-Pasteur Private, Ottawa, ON, K1N 6N5, Canada. E-mail address: bsaid053@uottawa.ca} \footnote{This article is based on a portion of the author's doctoral thesis.}}

\date{July 25, 2019}
\maketitle

\begin{abstract}
\noindent In this article, we introduce the space $\bD([0,1];\bD)$ of functions defined on $[0,1]$ with values in the Skorohod space $\bD$, which are right-continuous and have left limits with respect to the $J_1$ topology. This space is equipped with the Skorohod-type distance introduced in \cite{whitt80}. Following the classical approach of \cite{billingsley68, billingsley99}, we give several criteria for tightness of probability measures on this space, by characterizing the relatively compact subsets of this space. In particular, one of these criteria has been used in the recent article \cite{balan-saidani18} for proving the existence of a $\bD$-valued $\alpha$-stable L\'evy motion. Finally, we give a criterion for weak convergence of random elements in $\bD([0,1];\bD)$, and a
criterion for the existence of a process with sample paths in $\bD([0,1];\bD)$ based on its finite-dimensional distributions.
\end{abstract}

\noindent {\em MSC 2010:} Primary 60B10; Secondary 60F17, 60G20


\vspace{1mm}

\noindent {\em Keywords:} weak convergence and tightness of probability measures, Skorohod space, functional limit theorems, generalized stochastic processes

\section{Introduction}

The area of limit theorems for stochastic processes has been growing steadily in the last 50 years, especially after the publication of Billingsley's seminal monograph \cite{billingsley68}.
This area has developed from the original investigations of Donsker \cite{donsker51} and Skorohod \cite{skorohod56,skorohod57} regarding the asymptotic behaviour of the partial sum process associated with independent identically distributed (i.i.d.) random variables. These results state that if the variables have finite variance, the partial sum process converges in distribution to the Brownian motion, whereas if the variables have regularly varying tail probabilities with index $\alpha \in (0,2)$, the partial sum process converges in distribution to an $\alpha$-stable L\'evy motion.

In the recent article \cite{balan-saidani18}, we proved an extension of this later result to random elements with values in the Skorohod space $\bD=\bD[0,1]$ of c\`adl\`ag functions on $[0,1]$ (i.e. right-continuous functions with left limits), the limit being an infinite-dimensional process called the {\em $\bD$-valued $\alpha$-stable L\'evy motion}.
This study was motivated by the fact that nowadays, data is no longer observed at fixed moments at time, but is recorded continuously over a fixed period of time (or a region in space), which can be modeled by the interval $[0,1]$. This approach was initiated in article \cite{dehaan-lin01} in which the authors considered the example of the high-tide water level observed continuously at any location along
the northern coast of the Netherlands. Another example is the evolution of a stock price which is monitored continuously between 9 a.m and 5 p.m. when the stock market operates; if the price is likely to exhibit a sudden drop or increase, then its behaviour over this 8-hour window can be viewed as the sample path of a random process with values in $\bD$.

In \cite{balan-saidani18}, we proved that the $\bD$-valued $\alpha$-stable L\'evy motion $\{Z(t)\}_{t \in [0,1]}$ arises as the limit in distribution of the partial sum process $\{S_n(t)=a_n^{-1}\sum_{i=1}^{[nt]}X_i\}_{t \in [0,1]}$ associated with a sequence $(X_i)_{i\geq 1}$ of i.i.d. random elements in $\bD$, which are ``regularly varying'' in the sense introduced in \cite{hult-lindskog05}. Moreover, the sample paths of this process belong to the space $\bD([0,1];\bD)$ of functions defined on $[0,1]$ with values in $\bD$, which are right-continuous and have left-limits with respect to Skorohod $J_1$-topology on $\bD$. Therefore, the law of $\{Z(t)\}_{t \in [0,1]}$ is a probability measure on $\bD([0,1];\bD)$. For any element $x=\{x(t)\}_{t \in [0,1]}$ in $\bD([0,1];\bD)$, $x(t)$ is a c\`adl\`ag function denoted by $x(t)=\{x(t,s)\}_{s \in [0,1]}$. We interpret $t$ as the time variable and $s$ as the space variable.

The goal of the present article is to provide some of the technical details which are missing from the companion article \cite{balan-saidani18}, related to the weak convergence and tightness of probability measures on the space $\bD([0,1];\bD)$, providing in this way some useful tools for developing new limit theorems for random elements in $\bD$. In order to do this, we need first to develop a compactness criterion for subsets of $\bD([0,1];\bD)$. We note that the space $\bD([0,1];\bD)$ is endowed with a Skorohod-type topology which was introduced in \cite{whitt80} for spaces of the form $\bD([0,1];S)$, where $S$ is a Polish space, i.e. a complete separable metric space. The main result of the present article is Theorem \ref{analogue-th-13-2} which gives a
criterion for tightness of probability measures on $\bD([0,1];\bD)$. This result is new in the literature and has been used in the recent article \cite{balan-saidani18} for proving the existence of the $\bD$-valued $\alpha$-stable L\'evy motion with $\alpha>1$ (see the proof of Theorem 3.14 of \cite{balan-saidani18}).
The problem of weak convergence and tightness for probability measures on the space $\bD([0,\infty);S)$ of c\`adl\`ag functions defined on $[0,\infty)$ with values in a Polish space $S$ was also studied in \cite{ethier-kurtz86} (Chapter 3, Sections 5-9), but the particular result that we obtained when $S=\bD$ is not discussed in this reference.

Although it does not have a direct relationship with the results that we present here, we should mention that
a version of the It\^o-Nisio theorem for the sum of i.i.d. random processes with sample paths in $\bD([0,1];E)$ was proved in \cite{basse-rosinski13}, when $E$ is a separable Banach space. The space $\bD$ equipped with the uniform norm is a Banach space, but is not separable, and therefore the result of \cite{basse-rosinski13} does not apply to $\bD([0,1];\bD)$.

This article is organized as follows. In Section 2, we introduce the space $\bD([0,1];\bD)$ and discuss some of its properties. In Section 3, we present some criteria for tightness and weak convergence of probability measures on $\bD([0,1];\bD)$. One of these criteria, namely Theorem 3.8 below, has been used in the proof of Theorem 3.14 of \cite{balan-saidani18}. In Section 4, we refine the criterion for weak convergence and we derive a result about existence of a process with sample paths in $\bD([0,1];\bD)$. These results generalize classical results from \cite{billingsley68, billingsley99} and may be useful in future investigations.

\section{Basic properties of $\bD([0,1];\bD)$}

In this section, we introduce the space $\bD([0,1];\bD)$ of c\`adl\`ag functions on $[0,1]$ with values in $\bD$ (equipped with the $J_1$-topology), and we examine its properties following very closely the discussion contained in Section 12 of \cite{billingsley99} for c\`adl\`ag functions with values in $\bR$.


We begin by recalling some basic properties of the Skorohod space $\bD$, the space of functions $x:[0,1] \to \bR$ which are right-continuous and have left limits. On this space, we consider the supremum norm:
$\|x\|=\sup_{s \in [0,1]}|x(s)|$.

The Skorohod distance $d_{J_1}$ on $\bD$ is defined as follows: for any $x,y \in \bD$,
$$d_{J_1}(x,y)=\inf_{\lambda \in \Lambda} \{\|\lambda-e\| \vee \|x-y \circ \lambda\| \},$$
where $\Lambda$ the set of strictly increasing continuous functions from $[0,1]$ onto $[0,1]$ and $e$ is the identity function on $[0,1]$. The space $\bD$ equipped with distance $d_{J_1}$ is separable, but it is not complete. There exists another distance $d_{J_1}^0$ on $\bD$, which is equivalent to $d_{J_1}$, under which $\bD$ is complete and separable. This distance is given by:
(see (12.16) of \cite{billingsley99})
\begin{equation}
\label{def-dJ1}
d_{J_1}^0(x,y)=\inf_{\lambda \in \Lambda} \{\|\lambda\|^0 \vee \|x-y \circ \lambda\| \},
\end{equation}
for any $x,y \in \bD$, where
$\|\lambda\|^0=\sup_{s<s'} \left|\log \frac{\lambda(s')-\lambda(s)}{s'-s} \right|$.
Note that: 
\begin{equation}
\label{lambda0-equiv}
\sup_{s \in [0,1]}|\lambda(s)-s| \leq e^{\|\lambda\|^0}-1 \quad \mbox{for all} \ \lambda \in \Lambda,
\end{equation}
and therefore
\begin{equation}
\label{dJ1-smaller-dJ10}
d(x,y)\leq e^{d_{J_1}^0(x,y)}-1 \quad \mbox{for all} \quad x,y \in \bD.
\end{equation}
Taking $\lambda=e$ in \eqref{def-dJ1}, we obtain:
\begin{equation}
\label{dJ1-ineq}
d_{J_1}^0(x,y)\leq \|x-y\| \quad \mbox{for all} \quad x,y \in \bD.
\end{equation}
Note that
\begin{equation}
\label{d-equal-norm}
d_{J_1}(x,0)=d_{J_1}^0(x,0)=\|x\| \quad \mbox{for all} \quad x \in \bD.
\end{equation}
For functions $(x_n)_{n\geq 1}$ and $x$ in $\bD$, we write $x_n \stackrel{J_1}{\to}x$ if $d_{J_1}^0(x_n,x)\to 0$.

For any set $T \subset [0,1]$ and for any $x \in \bD(\bD)$, we let
$$w(x,T)=\sup_{s_1,s_2 \in T}|x(s_1)-x(s_2)|.$$

A set $\{t_i\}_{0\leq i\leq v}$ with $0=t_0<t_1<\ldots<t_v=1$ is called {\em $\delta$-sparse} if $\min_{1\leq i\leq v}(t_i-t_{i-1})>\delta$.
For any $\delta\in (0,1)$, we consider the following moduli of continuity of a function $x \in \bD$:
\begin{equation}
\label{def-w-prime}
w'(x,\delta)=\inf_{\{t_i\}} \max_{1\leq i\leq v} w(x,[t_{i-1},t_i)),
\end{equation}
where the infimum is taken over all $\delta$-sparse sets $\{t_i\}_{0\leq i \leq v}$, and
\begin{equation}
\label{def-w-second}
w''(x,\delta)=\sup_{s_1\leq s\leq s_2, s_2-s_1 \leq \delta} \big(|x(s)-x(s_1)|\wedge |x(s_2)-x(s)| \big).
\end{equation}

We denote by $\cD$ the Borel $\sigma$-field of $\bD$, which coincides with the $\sigma$-field generated by the projections $\pi_t:\bD \to \bR, t \in [0,1]$ given by $\pi_t(x)=x(t)$.

\medskip

We introduce now the set $\bD([0,1];\bD)$ of functions $x:[0,1]\to \bD$ such that:\\
{\em (i)} $x$ is right-continuous with respect to $J_1$, i.e. for any $t \in [0,1)$ and for any $(t_k)_{k\geq 1} \subset [0,1]$ with $t_k \to t$ and $t_k \geq t$ for all $k$, we have $x(t_k) \stackrel{J_1}{\to}x(t)$;\\
{\em (ii)} $x$ has left limits with respect to $J_1$, i.e. for any $t \in (0,1]$, there exists $x(t-) \in \bD$ such that for any $(t_k)_{k\geq 1} \subset [0,1]$ with $t_k \to t$ and $t_k <t$ for all $k$, we have $x(t_k) \stackrel{J_1}{\to}x(t-)$.

\vspace{2mm}
For any  $t \in [0,1]$, $x(t)$ is an element of $\bD$, which we denote by $\{x(t,s);s \in [0,1]\}$. In applications, $t$ may be interpreted as time variable, and $s$ as space variable (see \cite{balan-saidani18}).

The next result shows that a function in $\bD([0,1];\bD)$ is uniformly bounded in $t$ and $s$.

\begin{lemma}
For any $x \in \bD([0,1];\bD)$, the set $\{x(t);t\in [0,1]\}$ is relatively compact in $(\bD,J_1)$, and therefore $\sup_{t \in [0,1]}\|x(t)\|<\infty$.
\end{lemma}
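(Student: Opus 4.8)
The plan is to mimic the classical argument (Billingsley, Section 12) that a càdlàg function on a compact interval with values in a metric space has relatively compact range, adapting it to the metric space $(\bD, d_{J_1}^0)$. The key point is that $(\bD, d_{J_1}^0)$ is a complete separable metric space, so relative compactness of a subset is equivalent to total boundedness of its closure; hence it suffices to show that $\{x(t); t \in [0,1]\}$ is totally bounded, i.e. for every $\e > 0$ it can be covered by finitely many $d_{J_1}^0$-balls of radius $\e$.

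First I would fix $\e > 0$ and use the left-limit property {\em (ii)} together with the right-continuity property {\em (i)} to show that every point $t \in [0,1]$ has an associated one-sided neighborhood on which $x$ varies by less than $\e$ in the $d_{J_1}^0$ distance: concretely, for each $t \in (0,1]$ there is $\delta_t^- > 0$ with $d_{J_1}^0(x(r), x(s)) < \e$ for all $r, s \in (t - \delta_t^-, t)$ (this follows because $x(r) \stackrel{J_1}{\to} x(t-)$ as $r \uparrow t$), and for each $t \in [0,1)$ there is $\delta_t^+ > 0$ with $d_{J_1}^0(x(r), x(t)) < \e$ for all $r \in [t, t + \delta_t^+)$. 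Then the open intervals $(t - \delta_t^-, t)$ and $[t, t + \delta_t^+)$ over all $t$ form an open cover of the compact set $[0,1]$; extracting a finite subcover produces a finite partition $0 = s_0 < s_1 < \dots < s_m = 1$ such that on each subinterval $[s_{j-1}, s_j)$ the oscillation of $x$ in $d_{J_1}^0$ is at most $\e$ (this is exactly the standard lemma that a càdlàg function has only finitely many $\e$-oscillations, applied in the target metric). Therefore the set $\{x(t); t \in [0,1]\}$ is contained in the union of the $m$ closed $d_{J_1}^0$-balls of radius $\e$ centered at $x(s_0), x(s_1), \dots, x(s_{m-1})$ together with the point $x(1)$, which shows total boundedness and hence relative compactness.

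Finally, once $K = \overline{\{x(t); t \in [0,1]\}}$ is known to be compact in $(\bD, J_1)$, the bound $\sup_{t \in [0,1]} \|x(t)\| < \infty$ follows because the norm $\|\cdot\|$ is lower semicontinuous (in fact continuous on $J_1$-convergent sequences with a continuous limit, but semicontinuity is all that is needed) with respect to $J_1$ convergence, or more simply because $d_{J_1}^0(\cdot, 0) = \|\cdot\|$ by \eqref{d-equal-norm}, so $\|x(t)\| = d_{J_1}^0(x(t), 0)$ is a continuous function of the element $x(t) \in K$; a continuous real function on a compact set is bounded, giving $\sup_t \|x(t)\| = \sup_{y \in K} d_{J_1}^0(y, 0) < \infty$.

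The main obstacle I anticipate is the passage from the pointwise left- and right-limit properties {\em (i)} and {\em (ii)} to the \emph{uniform} statement that the range has only finitely many $\e$-jumps; one must be careful that the left-neighborhoods are genuinely half-open intervals not containing their right endpoint (so that $x(t-)$, not $x(t)$, governs the behaviour there) and argue the finite-subcover step cleanly, exactly as in the scalar case but with $d_{J_1}^0$ replacing $|\cdot|$ — there is no real extra difficulty here since only the metric-space structure of $(\bD, d_{J_1}^0)$ and its completeness are used, both of which are available from the earlier discussion in the excerpt.
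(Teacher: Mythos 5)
Your argument is correct, but it follows a genuinely different route from the paper's. The paper gives a three-line sequential argument: given an arbitrary sequence $x(t_n)$ in the range, extract a monotone subsequence $t_{n_k}\downarrow t$ or $t_{n_k}\uparrow t$, and conclude directly from the defining properties \emph{(i)} and \emph{(ii)} that $x(t_{n_k})$ converges in $J_1$ to $x(t)$ or to $x(t-)$; the bound $\sup_t\|x(t)\|<\infty$ is then read off from Theorem 12.3 of Billingsley. Your proof instead goes through total boundedness: you build the one-sided $\e$-oscillation neighborhoods, extract a finite subcover of $[0,1]$, and invoke completeness of $(\bD,d_{J_1}^0)$ to pass from total boundedness to relative compactness, finishing with the observation that $\|y\|=d_{J_1}^0(y,0)$ is continuous and hence bounded on the compact closure. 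Both are sound. What the paper's approach buys is brevity and independence from completeness (sequential relative compactness comes for free from the order structure of $[0,1]$); what yours buys is that it essentially establishes the finite-$\e$-oscillation decomposition (the paper's Lemma 2.4, stated only later) as a by-product, and it replaces the external citation of Theorem 12.3 by the self-contained remark that $d_{J_1}^0(\cdot,0)=\|\cdot\|$. One small caution: a finite subcover by the two-sided neighborhoods $(t-\delta_t^-,t+\delta_t^+)$ does not immediately yield an ordered partition $0=s_0<\dots<s_m=1$ with controlled oscillation on each $[s_{j-1},s_j)$ (that requires the supremum argument of Billingsley's Lemma 1, p.~122, since the oscillation across the point $t$ itself is not controlled); however, for total boundedness you only need the range to be covered by finitely many sets of small $d_{J_1}^0$-diameter, which the $2m$ one-sided pieces already provide, so the gap is cosmetic rather than substantive.
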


\noindent {\bf Proof:} Let $A=\{x(t);t\in [0,1]\}$ and $\{x(t_n)\}_{n \geq 1}$ be an arbitrary sequence in $A$. There exists a monotone subsequence $(t_{n_k})_{k\geq 1}$: either $t_{n_k} \downarrow t$ or $t_{n_k} \uparrow t$. Then either $x(t_{n_k}) \stackrel{J_1}{\to} x(t)$ or $x(t_{n_k}) \stackrel{J_1}{\to}x(t-)$. This shows that any sequence in $A$ has a $J_1$-convergent subsequence. So, $A$ is relatively compact in $(\bD,J_1)$. The last part follows by the characterization of relative compactness in $(\bD,J_1)$ given by Theorem 12.3 of \cite{billingsley99}. $\Box$

\vspace{3mm}

We denote by $\|\cdot\|_{\bD}$ the {\em super-uniform norm} on $\bD([0,1];\bD)$ given by:
$$\|x\|_{\bD}=\sup_{t \in [0,1]}\|x(t)\|.$$

We let $d_{\bD}$ be the {\em Skorohod distance} on $\bD([0,1];\bD)$, given by relation (2.1) of \cite{whitt80}:
\begin{equation}
\label{def-d-D}
d_{\bD}(x,y)=\inf_{\lambda \in \Lambda} \{\|\lambda-e\| \vee \rho_{\bD}(x,y \circ \lambda) \},
\end{equation}
where $\rho_{\bD}$ is the {\em uniform distance} on $\bD([0,1];\bD)$:
\begin{equation}
\label{def-rho-D}
\rho_{\bD}(x,y)=\sup_{t \in [0,1]}d_{J_1}^{0}(x(t),y(t)).
\end{equation}

By relation \eqref{d-equal-norm}, it follows that for any $x \in \bD([0,1];\bD)$,
\begin{equation}
\label{dD-equal-norm}
d_{\bD}(x,0)=\rho_{\bD}(x,0)=\|x\|_{\bD}.
\end{equation}

Note that for any $x,y \in \bD([0,1];\bD)$, we have:
\begin{equation}
\label{dist-ineq1}
d_{\bD}(x,y)\leq \rho_{\bD}(x,y) \leq \|x-y\|_{\bD}.
\end{equation}

By definition, $d_{\bD}(x_n,x) \to 0$ if and only if there exists a sequence $(\lambda_n)_{n\geq 1} \subset \Lambda$ such that
\begin{equation}
\label{conv-dD}
\sup_{t \in [0,1]}|\lambda_n(t)-t| \to 0 \quad \mbox{and} \quad \sup_{t \in [0,1]}d_{J_1}^{0}(x_n(\lambda_n(t)),x(t))\to 0.
\end{equation}
Similarly to $\bD$, the uniform topology on $\bD([0,1];\bD)$
is stronger than the Skorohod topology on this space: if $\rho_{\bD}(x_n,x) \to 0$ then $d_{\bD}(x_n,x) \to 0$ (take $\lambda_n=e$ in \eqref{conv-dD}). The following result is also similar to the classical case.

\begin{lemma}
\label{properties-dD}
a) If $d_{\bD}(x_n,x) \to 0$, then $x_n(t) \stackrel{J_1}{\to} x(t)$ for any continuity point $t$ of $x$ (with respect to $J_1$).\\
b) If $d_{\bD}(x_n,x) \to 0$ and $x$ is continuous on $[0,1]$ with respect to $J_1$, then
$\rho_{\bD}(x_n,x) \to 0$.
\end{lemma}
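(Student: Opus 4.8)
The plan is to follow the classical argument from Section~12 of \cite{billingsley99}, using the characterization \eqref{conv-dD} of convergence in $d_{\bD}$. Concretely, I would fix time-changes $(\lambda_n)_{n\geq 1}\subset \Lambda$ realizing $d_{\bD}(x_n,x)\to 0$, and set $\alpha_n=\sup_{t\in[0,1]}|\lambda_n(t)-t|$ and $\beta_n=\sup_{t\in[0,1]}d_{J_1}^0\big(x_n(\lambda_n(t)),x(t)\big)$, both of which tend to $0$ by \eqref{conv-dD}.

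For part~a), fix a continuity point $t$ of $x$. Since $\lambda_n$ is a continuous strictly increasing bijection of $[0,1]$, there is a point $u_n\in[0,1]$ with $\lambda_n(u_n)=t$; then $|u_n-t|=|u_n-\lambda_n(u_n)|\leq\alpha_n\to 0$, so $u_n\to t$. Specializing the definition of $\beta_n$ to $s=u_n$ gives $d_{J_1}^0\big(x_n(t),x(u_n)\big)\leq\beta_n\to 0$. Separately, since $t$ is a continuity point and $x$ is right-continuous with left limits in $J_1$, any sequence converging to $t$ is carried by $x$ to a $J_1$-convergent sequence with limit $x(t)$: one splits $(u_n)$ into the terms $\geq t$ (handled by right-continuity) and the terms $<t$ (handled by the left limit, which equals $x(t)$ at a continuity point), and recombines by the usual subsequence argument, with the obvious simplifications at the endpoints $t=0,1$. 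Hence $d_{J_1}^0\big(x(u_n),x(t)\big)\to 0$, and the triangle inequality for $d_{J_1}^0$ gives $d_{J_1}^0\big(x_n(t),x(t)\big)\to 0$.

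For part~b), the additional ingredient is that a continuous map from the compact interval $[0,1]$ into the metric space $(\bD,d_{J_1}^0)$ is uniformly continuous; thus, given $\eta>0$, there is $\delta>0$ with $d_{J_1}^0\big(x(s),x(s')\big)<\eta$ whenever $|s-s'|<\delta$. For an arbitrary $t\in[0,1]$ I would write $t=\lambda_n(s_n)$, so that $|s_n-t|=|s_n-\lambda_n(s_n)|\leq\alpha_n$, and use $d_{J_1}^0\big(x_n(t),x(t)\big)=d_{J_1}^0\big(x_n(\lambda_n(s_n)),x(\lambda_n(s_n))\big)$ together with
\begin{align*}
d_{J_1}^0\big(x_n(\lambda_n(s_n)),x(\lambda_n(s_n))\big) &\leq d_{J_1}^0\big(x_n(\lambda_n(s_n)),x(s_n)\big) \\
&\quad + d_{J_1}^0\big(x(s_n),x(\lambda_n(s_n))\big).
\end{align*}
The first term on the right is at most $\beta_n$, and the second is less than $\eta$ as soon as $\alpha_n<\delta$, both uniformly in $t$. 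Taking the supremum over $t$ gives $\rho_{\bD}(x_n,x)\leq\beta_n+\eta$ for all large $n$, and letting $\eta\downarrow 0$ yields $\rho_{\bD}(x_n,x)\to 0$.

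I do not expect a serious obstacle: the whole argument is essentially the $\bR$-valued proof of \cite{billingsley99} with $|\cdot|$ replaced by the metric $d_{J_1}^0$. The only points requiring a little care are the use of the inverse time-changes (legitimate since each $\lambda_n$ is a homeomorphism of $[0,1]$), the behaviour at the endpoints $0$ and $1$, and --- for part~b) --- the passage from $J_1$-continuity on the compact set $[0,1]$ to uniform $J_1$-continuity.
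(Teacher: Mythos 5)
Your proposal is correct and follows essentially the same route as the paper: the triangle inequality through $x$ evaluated at the (inverse) time-change, with the first term controlled uniformly by \eqref{conv-dD} and the second by continuity of $x$ at $t$ (respectively, by uniform continuity of $x$ on the compact $[0,1]$ for part b). Your write-up is in fact more careful than the paper's, which leaves the c\`adl\`ag subsequence argument for $d_{J_1}^0(x(\lambda_n(t)),x(t))\to 0$ implicit.
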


\noindent {\bf Proof:} Let $(\lambda_n)_{n\geq 1}\subset \Lambda$ be such that \eqref{conv-dD} holds. a) Then
$$d_{J_1}^0(x_n(t),x(t))\leq d_{J_1}^0(x_n(t),x(\lambda_n(t)))+d_{J_1}^0(x(\lambda_n(t)),x(t))\to 0.$$
b) Since $x$ is continuous on the compact set $[0,1]$, it is also uniformly continuous. Hence
$$\rho_{\bD}(x_n,x) \leq \sup_{t \in [0,1]}d_{J_1}^0(x_n(t),x(\lambda_n(x(t))))+ \sup_{t \in [0,1]}d_{J_1}^0(x(\lambda_n(x(t))),x(t))\to 0.$$
$\Box$

The next result show that the super-uniform norm is continuous on $\bD([0,1];\bD)$.

\begin{lemma}
\label{dD-cont}
If $(x_n)_{n\geq 1}$ and $x$ are functions in $\bD([0,1];\bD)$ such that $d_{\bD}(x_n,x) \to 0$ as $n \to \infty$, then $\|x_n\|_{\bD} \to \|x\|_{\bD}$ as $n \to \infty$.
\end{lemma}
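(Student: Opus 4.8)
The plan is to exploit the invariance of the supremum norm on $\bD$ under time changes in $\Lambda$, together with the identity \eqref{d-equal-norm}. First I would invoke the characterization \eqref{conv-dD}: the hypothesis $d_{\bD}(x_n,x)\to 0$ supplies a sequence $(\lambda_n)_{n\geq 1}\subset\Lambda$ such that
\begin{equation*}
\e_n:=\sup_{t\in[0,1]}d_{J_1}^{0}\bigl(x_n(\lambda_n(t)),x(t)\bigr)\longrightarrow 0
\end{equation*}
(the other condition on $\lambda_n$ appearing in \eqref{conv-dD}, namely $\sup_t|\lambda_n(t)-t|\to 0$, will not be needed here).

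Next, since each $\lambda_n$ is a strictly increasing continuous bijection of $[0,1]$ onto $[0,1]$, as $t$ runs over $[0,1]$ so does $\lambda_n(t)$, and hence the inner supremum defining the super-uniform norm is unaffected by this reparametrization: $\|x_n\|_{\bD}=\sup_{t\in[0,1]}\|x_n(t)\|=\sup_{t\in[0,1]}\|x_n(\lambda_n(t))\|$. Then, for each fixed $t\in[0,1]$, I would use \eqref{d-equal-norm} to write $\|x_n(\lambda_n(t))\|=d_{J_1}^{0}(x_n(\lambda_n(t)),0)$ and $\|x(t)\|=d_{J_1}^{0}(x(t),0)$, and apply the triangle inequality for the metric $d_{J_1}^{0}$ to obtain
\begin{equation*}
\bigl|\,\|x_n(\lambda_n(t))\|-\|x(t)\|\,\bigr|\leq d_{J_1}^{0}\bigl(x_n(\lambda_n(t)),x(t)\bigr)\leq \e_n .
\end{equation*}

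Finally, taking the supremum over $t\in[0,1]$ in the two resulting one-sided bounds $\|x_n(\lambda_n(t))\|\leq\|x(t)\|+\e_n$ and $\|x(t)\|\leq\|x_n(\lambda_n(t))\|+\e_n$, and using that both $\|x_n\|_{\bD}$ and $\|x\|_{\bD}$ are finite (by the first lemma of this section, which guarantees $\sup_{t}\|y(t)\|<\infty$ for every $y\in\bD([0,1];\bD)$) so that the arithmetic is legitimate, I get $\bigl|\,\|x_n\|_{\bD}-\|x\|_{\bD}\,\bigr|\leq\e_n\to 0$, which is the assertion. I do not expect any genuine obstacle: the proof is essentially a reparametrization-invariance observation plus the triangle inequality. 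The only point meriting care is precisely that the supremum norm on $\bD$, and hence the super-uniform norm on $\bD([0,1];\bD)$, is insensitive to composition with an element of $\Lambda$ — this is what lets one convert control of $\sup_t d_{J_1}^0(x_n(\lambda_n(t)),x(t))$ into control of $\|x_n\|_{\bD}$ itself.
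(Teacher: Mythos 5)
Your proof is correct and follows essentially the same route as the paper's: both extract $(\lambda_n)$ from \eqref{conv-dD}, use the invariance of the super-uniform norm under composition with $\lambda_n$, and apply the reverse triangle inequality together with the identity $\|y\|=d_{J_1}^0(y,0)$; the paper merely phrases the last step at the level of the sup-metric $\rho_{\bD}$ via \eqref{dD-equal-norm} rather than pointwise in $t$.
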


\noindent {\bf Proof:} Let $(\lambda_n)_{n \geq 1} \subset \Lambda$ be such that
\eqref{conv-dD} holds. By \eqref{dD-equal-norm}, we have:
$$|\|x_n \circ \lambda_n\|_{\bD} -\|x\|_{\bD}|=|\rho_{\bD}(x_n\circ \lambda_n,0)-\rho_{\bD}(x,0)|\leq \rho_{\bD}(x_n \circ \lambda_n,x) \to 0.$$
The conclusion follows since $\|x_n \circ \lambda_n\|_{\bD}=\|x_n\|_{\bD}$ (because $\lambda_n$ is a one-to-one map). $\Box$

\vspace{3mm}


For any set $T \subset [0,1]$ and for any $x \in \bD([0,1];\bD)$, we let
$$w_{\bD}(x,T)=\sup_{t_1,t_2 \in T}d_{J_1}^0(x(t_1),x(t_2)).$$

The following result is proved similarly to Lemma 1 (page 122) of \cite{billingsley99}.

\begin{lemma}
\label{w-D-e}
For any $x \in \bD([0,1];\bD)$ and $\e>0$, there exist $0=t_0<t_1<\ldots<t_v=1$ such that
$$w_{\bD}(x,[t_{i-1},t_i))<\e \quad \mbox{for all} \quad i=1,\ldots,v.$$
\end{lemma}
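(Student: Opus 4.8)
The plan is to follow the proof of Lemma~1 on page~122 of \cite{billingsley99}, transcribing it from the real line to the metric space $(\bD,d_{J_1}^0)$: the euclidean oscillation $\sup_{s_1,s_2\in T}|x(s_1)-x(s_2)|$ is replaced throughout by $w_{\bD}(x,T)$, which is nothing but the diameter of $\{x(t):t\in T\}$ in $(\bD,d_{J_1}^0)$. The only structural facts needed are the triangle inequality for $d_{J_1}^0$ and the observation that, by the very definition of $\bD([0,1];\bD)$ together with the equivalence of $d_{J_1}$ and $d_{J_1}^0$, the map $t\mapsto x(t)$ is right-continuous and has left limits when regarded as a map into the metric space $(\bD,d_{J_1}^0)$.

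Fix $\e>0$ and call a point $t\in(0,1]$ \emph{admissible} if there exist $0=s_0<s_1<\cdots<s_k=t$ with $w_{\bD}(x,[s_{j-1},s_j))<\e$ for every $j=1,\dots,k$; let $H$ denote the set of admissible points. First I would observe that $H$ is nonempty and downward closed: right-continuity of $x$ at $0$ yields $\delta>0$ with $d_{J_1}^0(x(s),x(0))<\e/4$ for all $s\in[0,\delta)$, whence $w_{\bD}(x,[0,\delta))\le\e/2<\e$ by the triangle inequality, so $\delta\in H$; and truncating a partition shows that $t\in H$ and $0<t'<t$ imply $t'\in H$. Put $\tau=\sup H\in(0,1]$.

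Next I would show $\tau\in H$. Since $x$ has a left limit $x(\tau-)\in\bD$ at $\tau$, there is $\eta>0$ with $d_{J_1}^0(x(s),x(\tau-))<\e/4$ for all $s\in[\tau-\eta,\tau)$, so $w_{\bD}(x,[\tau-\eta,\tau))\le\e/2<\e$. Pick $t\in H$ with $t>\tau-\eta$; if $t=\tau$ we are done, and otherwise $[t,\tau)\subset[\tau-\eta,\tau)$, so appending the piece $[t,\tau)$ to a partition of $[0,t)$ witnessing $t\in H$ produces a partition of $[0,\tau)$ with oscillation $<\e$ on each piece, i.e. $\tau\in H$. Finally, if $\tau<1$, right-continuity of $x$ at $\tau$ gives $\eta'>0$ with $d_{J_1}^0(x(s),x(\tau))<\e/4$ for $s\in[\tau,\tau+\eta')$, hence $w_{\bD}(x,[\tau,\tau+\eta''))<\e$ with $\eta''=\eta'\wedge(1-\tau)>0$; appending $[\tau,\tau+\eta'')$ to a partition witnessing $\tau\in H$ shows $\tau+\eta''\in H$, contradicting $\tau=\sup H$. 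Therefore $\tau=1\in H$, which is exactly the assertion of the lemma.

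I do not expect a genuine obstacle here, since the argument is the classical one run in a metric space rather than in $\bR$. The only points requiring a moment's care are that the suprema defining $w_{\bD}$ must be controlled with strict inequalities (which is why I use bounds of the form $\e/4$ rather than $\e/2$), and that ``left limits / right-continuity with respect to $J_1$'' in the definition of $\bD([0,1];\bD)$ really does mean convergence in the metric $d_{J_1}^0$, so that the triangle inequality is available.
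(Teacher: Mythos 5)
Your proof is correct and is precisely the argument the paper has in mind: the paper simply states that the lemma "is proved similarly to Lemma 1 (page 122) of Billingsley (1999)," and your write-up is a faithful transcription of that classical supremum-of-admissible-points argument to the metric space $(\bD,d_{J_1}^0)$, using right-continuity, left limits, and the triangle inequality exactly as intended.
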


A consequence of this result is that for $x \in \bD([0,1];\bD)$ and $\e>0$, there can be at most finitely many points $t \in [0,1]$ such that
$d_{J_1}^0(x(t),x(t-))>\e$. Hence, any function $x \in \bD([0,1];\bD)$ has a countable set of discontinuities with respect to $J_1$, which we denote by ${\rm Disc}(x)$. The maximum jump of $x$ is defined by:
$$j(x)=\sup_{t \in [0,1]}d_{J_1}^0(x(t),x(t-))$$

For any $\delta\in (0,1)$ and $x \in \bD([0,1];\bD)$, we let
\begin{equation}
\label{def-wD-prime}
w_{\bD}'(x,\delta)=\inf_{\{t_i\}} \max_{1\leq i\leq v} w_{\bD}(x,[t_{i-1},t_i)),
\end{equation}
where the infimum is taken over all $\delta$-sparse sets $\{t_i\}_{0\leq i \leq v}$.

Clearly, the function $w'_{\bD}(x,\cdot)$ is non-decreasing. The following two results give some further properties of $w'_{\bD}(x,\delta)$.

\begin{lemma}
For any $x \in \bD([0,1];\bD)$,
\begin{equation}
\label{limit-w'-zero}
\lim_{\delta \to 0}w_{\bD}'(x,\delta)=0
\end{equation}
$$w'_{\bD}(x,\delta)\leq w_{\bD}(x,2\delta) \quad \mbox{for any} \quad \delta \in (0,1/2),$$
$$w_{\bD}(x,\delta)\leq 2w'_{\bD}(x,\delta)+j(x) \quad \mbox{for any} \quad \delta \in (0,1).$$
\end{lemma}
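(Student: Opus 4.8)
The three relations are the exact analogues, for the space $\bD([0,1];\bD)$ with the metric $d_{J_1}^0$ in the role of $|\cdot|$ on $\bR$, of the classical facts about the moduli $w'$ and $w$ in Section~12 of \cite{billingsley99}, and the plan is to mimic those proofs. Throughout I write $w_{\bD}(x,\delta)=\sup\{d_{J_1}^0(x(t_1),x(t_2)):|t_1-t_2|\le\delta\}$. The limit \eqref{limit-w'-zero} is immediate from Lemma~\ref{w-D-e}: given $\e>0$, pick $0=t_0<t_1<\ldots<t_v=1$ with $w_{\bD}(x,[t_{i-1},t_i))<\e$ for all $i$ and set $\delta_0=\min_i(t_i-t_{i-1})>0$; for every $\delta<\delta_0$ this partition is $\delta$-sparse, so by \eqref{def-wD-prime} we get $w_{\bD}'(x,\delta)\le\max_i w_{\bD}(x,[t_{i-1},t_i))<\e$, and letting $\e\downarrow 0$ gives the claim.

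For the inequality $w_{\bD}'(x,\delta)\le w_{\bD}(x,2\delta)$ I would produce one suitable competitor in the infimum \eqref{def-wD-prime}: take $v=\lceil 1/(2\delta)\rceil$ and the equally spaced points $t_i=i/v$, $0\le i\le v$. Since $v\ge 1/(2\delta)$ we have $t_i-t_{i-1}=1/v\le 2\delta$, while $v<1/(2\delta)+1<1/\delta$ — the second inequality being exactly the place where $\delta<1/2$ is used — gives $t_i-t_{i-1}=1/v>\delta$, so $\{t_i\}$ is $\delta$-sparse. As any $t_1,t_2\in[t_{i-1},t_i)$ satisfy $|t_1-t_2|<1/v\le 2\delta$, we get $w_{\bD}(x,[t_{i-1},t_i))\le w_{\bD}(x,2\delta)$ for each $i$, whence $w_{\bD}'(x,\delta)\le w_{\bD}(x,2\delta)$.

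For the last inequality, fix $\delta\in(0,1)$ and $\e>0$, and using \eqref{def-wD-prime} choose a $\delta$-sparse set $0=s_0<s_1<\ldots<s_v=1$ with $w_{\bD}(x,[s_{i-1},s_i))<w_{\bD}'(x,\delta)+\e$ for all $i$. Given $t_1\le t_2$ in $[0,1]$ with $t_2-t_1\le\delta$, the condition $\min_i(s_i-s_{i-1})>\delta$ forces $t_1,t_2$ to lie either in one subinterval $[s_{i-1},s_i)$ or in two consecutive ones, $t_1\in[s_{i-1},s_i)$ and $t_2\in[s_i,s_{i+1})$ (with the evident modification when $t_2=1$, where $t_1\in[s_{v-1},1)$ and one uses $x(1-)$ in place of $x(s_i-),x(s_i)$). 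In the first case $d_{J_1}^0(x(t_1),x(t_2))<w_{\bD}'(x,\delta)+\e$. In the second, using that $x(s)\stackrel{J_1}{\to}x(s_i-)$ as $s\uparrow s_i$ together with the continuity of $d_{J_1}^0$ to obtain $d_{J_1}^0(x(t_1),x(s_i-))\le w_{\bD}(x,[s_{i-1},s_i))$, the triangle inequality, and the definition of $j(x)$, one gets
\begin{align*}
d_{J_1}^0(x(t_1),x(t_2)) &\le d_{J_1}^0(x(t_1),x(s_i-))+d_{J_1}^0(x(s_i-),x(s_i))+d_{J_1}^0(x(s_i),x(t_2))\\
&\le w_{\bD}(x,[s_{i-1},s_i))+j(x)+w_{\bD}(x,[s_i,s_{i+1}))<2\,w_{\bD}'(x,\delta)+j(x)+2\e.
\end{align*}
Taking the supremum over all such $t_1,t_2$ and then letting $\e\downarrow 0$ gives $w_{\bD}(x,\delta)\le 2w_{\bD}'(x,\delta)+j(x)$.

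I expect the only mildly delicate points to be (i) the elementary estimate $v<1/\delta$ in the second part, which is precisely where the hypothesis $\delta<1/2$ enters to guarantee the existence of a $\delta$-sparse partition of mesh at most $2\delta$; and (ii) in the third part, the passage to the $J_1$ left-limit inside $d_{J_1}^0$ (legitimate because $x(s_i-)\in\bD$ exists and $d_{J_1}^0$, being a metric, is continuous) together with the bookkeeping at the right endpoint $t_2=1=s_v$.
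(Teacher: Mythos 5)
Your proof is correct and follows essentially the same route as the paper: the argument for \eqref{limit-w'-zero} via Lemma \ref{w-D-e} is identical, and for the two inequalities the paper simply defers to Billingsley's (12.7) and (12.9) adapted with the triangle inequality in $(\bD,d_{J_1}^0)$, which is exactly what you carry out (including the correct use of $\delta<1/2$ for the $\delta$-sparse mesh-$2\delta$ partition and the passage to the $J_1$ left limit via continuity of $d_{J_1}^0$).
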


\noindent {\bf Proof:} To prove the first relation, let $\e>0$ be arbitrary and $\{t_i\}_{0\leq i\leq v}$ be the sequence given by Lemma \ref{w-D-e}. Pick $0<\delta_{\e}<\min_{0\leq i \leq v}(t_i-t_{i-1})$. For any $\delta \in (0,\delta_{\e})$, $\{t_i\}_{0\leq i\leq v}$ is $\delta$-sparse, and hence
$w_{\bD}'(x,\delta) \leq \max_{1\leq i\leq v} w_{\bD}(x,[t_{i-1},t_i))<\e$.
The last two relations are proved similarly to (12.7) and (12.9) of \cite{billingsley99}, using the triangle inequality in $(\bD,d_{J_1}^0)$. We omit the details. $\Box$

\begin{lemma}
\label{upper-semicont}
 $w'_{\bD}(\cdot,\delta)$ is upper-semicontinuous on $\bD([0,1];\bD)$ equipped with $d_{\bD}$.
\end{lemma}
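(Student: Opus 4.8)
The claim is that $w'_{\bD}(\cdot,\delta)$ is upper-semicontinuous on $(\bD([0,1];\bD), d_{\bD})$, which is the analogue of (12.8) in Billingsley. So I would fix $x$ and a sequence $x_n$ with $d_{\bD}(x_n,x)\to 0$, and aim to show $\limsup_n w'_{\bD}(x_n,\delta)\le w'_{\bD}(x,\delta)$. By definition of $w'_{\bD}(x,\delta)$ as an infimum over $\delta$-sparse partitions, given $\eta>0$ I can pick a $\delta$-sparse set $\{t_i\}_{0\le i\le v}$ with $\max_i w_{\bD}(x,[t_{i-1},t_i)) < w'_{\bD}(x,\delta)+\eta$. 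The strategy is then to perturb this fixed partition by the time-changes $\lambda_n$ coming from $d_{\bD}(x_n,x)\to 0$, producing a $\delta$-sparse partition for $x_n$ whose mesh-modulus is close to that of $x$.

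\medskip

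\emph{Key steps.} First, invoke \eqref{conv-dD}: there exist $\lambda_n\in\Lambda$ with $\|\lambda_n-e\|\to 0$ and $\sup_t d_{J_1}^0(x_n(\lambda_n(t)),x(t))\to 0$. Second, note that since the chosen partition $\{t_i\}$ is $\delta$-sparse with strict inequality $\min_i(t_i-t_{i-1})>\delta$, for $n$ large enough the perturbed points $s_i^{(n)}:=\lambda_n^{-1}(t_i)$ (with $s_0^{(n)}=0$, $s_v^{(n)}=1$) are still $\delta$-sparse — this is where $\|\lambda_n-e\|\to 0$ is used, via $|\lambda_n^{-1}(t_i)-t_i|\le\|\lambda_n^{-1}-e\| = \|\lambda_n - e\| \to 0$ (using that $\lambda_n\in\Lambda$ so $\|\lambda_n^{-1}-e\|=\|e-\lambda_n\|$). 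Third, for $t_1,t_2$ in the block $[s_{i-1}^{(n)},s_i^{(n)})$, we have $\lambda_n(t_1),\lambda_n(t_2)\in[t_{i-1},t_i)$, so
$$d_{J_1}^0(x_n(t_1),x_n(t_2)) \le d_{J_1}^0(x_n(t_1),x(\lambda_n^{-1}\circ\lambda_n\, t_1)) + \dots$$
more cleanly: write $\tau_j = \lambda_n(t_j)$, then $t_j = \lambda_n^{-1}(\tau_j)$, and
$$d_{J_1}^0(x_n(t_1),x_n(t_2)) \le d_{J_1}^0(x_n(\lambda_n^{-1}(\tau_1)),x(\tau_1)) + d_{J_1}^0(x(\tau_1),x(\tau_2)) + d_{J_1}^0(x(\tau_2),x_n(\lambda_n^{-1}(\tau_2))).$$
Since $\tau_1,\tau_2\in[t_{i-1},t_i)$, the middle term is $\le w_{\bD}(x,[t_{i-1},t_i)) < w'_{\bD}(x,\delta)+\eta$, and the two outer terms are each $\le \sup_t d_{J_1}^0(x_n(\lambda_n(t)),x(t)) =: \beta_n \to 0$. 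Taking the sup over $t_1,t_2$ in the block and then the max over $i$ gives $w'_{\bD}(x_n,\delta) \le \max_i w_{\bD}(x_n,[s_{i-1}^{(n)},s_i^{(n)})) \le w'_{\bD}(x,\delta)+\eta + 2\beta_n$. Letting $n\to\infty$ and then $\eta\to 0$ yields the conclusion.

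\medskip

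\emph{Main obstacle.} The only genuinely delicate point is verifying the $\delta$-sparseness of the perturbed partition $\{s_i^{(n)}\}$, and keeping its endpoints pinned at $0$ and $1$: one must check that $\lambda_n^{-1}$ maps $(t_{i-1},t_i)$ into an interval of length strictly exceeding $\delta$ once $\|\lambda_n-e\|$ is small relative to $\min_i(t_i-t_{i-1})-\delta>0$, and that the leftmost/rightmost blocks $[0,s_1^{(n)})$ and $[s_{v-1}^{(n)},1)$ are handled correctly (they are, since $\lambda_n$ fixes $0$ and $1$). Everything else is a routine triangle-inequality estimate in $(\bD,d_{J_1}^0)$ together with \eqref{conv-dD}, exactly parallel to the argument on page 122 of \cite{billingsley99}. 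I would therefore present this compactly, emphasizing the sparseness argument and citing Billingsley for the rest.
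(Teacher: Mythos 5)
Your argument is essentially the paper's: the proof in the paper simply defers to Lemma 4 on page 130 of \cite{billingsley99}, replacing $|y(t)-x(\lambda(t))|$ by $d_{J_1}^0\big(y(t),x(\lambda(t))\big)$ and using the triangle inequality in $(\bD,d_{J_1}^0)$, which is exactly the perturbed-partition argument you spell out. The one slip is an orientation mix-up: from $\beta_n=\sup_t d_{J_1}^0\big(x_n(\lambda_n(t)),x(t)\big)$ the substitution $u=\lambda_n(t)$ gives control of $d_{J_1}^0\big(x_n(u),x(\lambda_n^{-1}(u))\big)$, not of $d_{J_1}^0\big(x_n(\lambda_n^{-1}(\tau)),x(\tau)\big)$ as you claim, so the perturbed partition should be $s_i^{(n)}=\lambda_n(t_i)$ rather than $\lambda_n^{-1}(t_i)$; with that change (and the same sparseness check, since $\|\lambda_n-e\|=\|\lambda_n^{-1}-e\|$) every step goes through verbatim.
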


\noindent {\bf Proof:} Let $x \in \bD([0,1];\bD)$ and $\e>0$ be arbitrary. We have to prove that there exists $\eta>0$ such that $w'_{\bD}(y,\delta)<w'_{\bD}(x,\delta)+\e$ for any $y \in
\bD([0,1];\bD)$ such that $d_{\bD}(x,y)<\eta$.
This follows by the same argument as in Lemma 4 (page 130) of \cite{billingsley99}, replacing $|y(t)-x(\lambda(t))|$ by $d_{J_1}^0(y(t),x(\lambda(t)))$ and using the triangle inequality in $(\bD,d_{J_1}^0)$. $\Box$

\vspace{3mm}

The space $\bD([0,1];\bD)$ equipped with $d_{\bD}$ is separable, but it is not complete. Similarly to the distance $d_{J_1}^0$ on $\bD$, we consider another distance $d_{\bD}^0$ on $\bD([0,1];\bD)$, given by:
\begin{equation}
\label{def-d-D-0}
d_{\bD}^0(x,y)=\inf_{\lambda \in \Lambda} \{\|\lambda\|^0 \vee \rho_{\bD}(x,y \circ \lambda) \}.
\end{equation}
Then
$d_{\bD}(x,y)\leq e^{d_{\bD}^0(x,y)}-1$ for all $x,y \in \bD([0,1];\bD)$.

Similarly to Theorems 12.1 and 12.2 of \cite{billingsley99}, and using the fact that $\bD$ is separable and complete under $d_{J_1}^0$, we obtain the following result. (See also Theorem 2.6 of \cite{whitt80}.)

\begin{theorem}
\label{bD-CSMS}
The metrics $d_{\bD}$ and $d_{\bD}^0$ are equivalent. The space $\bD([0,1];\bD)$ is separable under $d_{\bD}$ and $d_{\bD}^0$, and is complete under $d_{\bD}^0$.
\end{theorem}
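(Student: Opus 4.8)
The plan is to mimic, almost line by line, the proofs of Theorems 12.1 and 12.2 of \cite{billingsley99}, using $(\bD,d_{J_1}^0)$ as the target space in place of $(\bR,|\cdot|)$ and exploiting the fact that $\bD$ is separable and complete under $d_{J_1}^0$. First I would establish the equivalence of $d_{\bD}$ and $d_{\bD}^0$: since $\|\lambda-e\|\le e^{\|\lambda\|^0}-1$ by \eqref{lambda0-equiv}, taking the infimum over $\lambda$ gives $d_{\bD}(x,y)\le e^{d_{\bD}^0(x,y)}-1$, so $d_{\bD}^0$-convergence implies $d_{\bD}$-convergence. For the converse, if $d_{\bD}(x_n,x)\to 0$, choose $\lambda_n\in\Lambda$ with $\|\lambda_n-e\|\vee\rho_{\bD}(x_n,x\circ\lambda_n)\to 0$; as in Billingsley one modifies $\lambda_n$ to a piecewise-linear $\mu_n$ agreeing with $\lambda_n$ on a fine enough grid so that $\|\mu_n\|^0\to 0$, while the uniform continuity of each $x(t)$ together with the finitely-many-big-jumps property (Lemma \ref{w-D-e}) and $w_{\bD}'(x,\delta)\to 0$ controls $\rho_{\bD}(x_n,x\circ\mu_n)$; this gives $d_{\bD}^0(x_n,x)\to 0$.

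Next I would prove separability under $d_{\bD}$ (hence under $d_{\bD}^0$ by equivalence). Given $x\in\bD([0,1];\bD)$ and $\e>0$, use Lemma \ref{w-D-e} to pick $0=t_0<\dots<t_v=1$ with $w_{\bD}(x,[t_{i-1},t_i))<\e$, define the step function $y$ equal to $x(t_{i-1})$ on $[t_{i-1},t_i)$ and to $x(1)$ at $1$; then $\rho_{\bD}(x,y)\le\e$, so step functions are $\rho_{\bD}$-dense, hence $d_{\bD}$-dense. Now replace each value $x(t_{i-1})\in\bD$ by an element of a fixed countable $d_{J_1}^0$-dense subset of $\bD$ (which exists since $\bD$ is separable under $d_{J_1}^0$) within $\e$, and each $t_i$ by a rational; this produces a countable $d_{\bD}$-dense family, so $\bD([0,1];\bD)$ is separable.

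Finally, completeness under $d_{\bD}^0$. Following Theorem 12.2 of \cite{billingsley99}, it suffices to show that any $d_{\bD}^0$-Cauchy sequence has a $d_{\bD}^0$-convergent subsequence; pass to $(x_n)$ with $d_{\bD}^0(x_n,x_{n+1})<2^{-n}$, pick $\mu_n\in\Lambda$ realizing this (up to a factor), and set $\lambda_n=\mu_{n-1}\circ\mu_{n-2}\circ\cdots$; the bound $\|\lambda_{n+1}-\lambda_n\|^0\le 2^{-n}$ (up to constants) shows $\lambda_n$ converges uniformly to some $\lambda\in\Lambda$ with a continuous inverse, and the composed functions $x_n\circ\lambda_n$ form a $\rho_{\bD}$-Cauchy sequence in the complete metric space $\bigl(\bD([0,1];\bD),\rho_{\bD}\bigr)$ — completeness of $\rho_{\bD}$ being inherited coordinatewise from completeness of $(\bD,d_{J_1}^0)$, once one checks the uniform-in-$t$ limit again lies in $\bD([0,1];\bD)$. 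Its limit $z$ pulled back by $\lambda^{-1}$ gives the $d_{\bD}^0$-limit of $(x_n)$. I expect the main obstacle to be precisely this last verification: that the $\rho_{\bD}$-limit of $x_n\circ\lambda_n$ is again right-continuous with left limits in the $J_1$ sense (not merely a pointwise limit of $\bD$-valued functions), which requires an $\e/3$ argument combining $\rho_{\bD}$-closeness with the $w_{\bD}'$-modulus estimates from Lemma \ref{limit-w'-zero} and the upper semicontinuity in Lemma \ref{upper-semicont}; the manipulation of the chained time-changes $\lambda_n$ and the control of $\|\lambda\|^0$ is the other delicate point, but it is entirely parallel to the scalar case. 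I would omit the routine estimates and refer to \cite{billingsley99} and \cite{whitt80} for the details.
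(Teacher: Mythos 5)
Your plan follows exactly the route the paper takes: the paper proves this theorem by simply invoking the arguments of Theorems 12.1 and 12.2 of \cite{billingsley99} together with the separability and completeness of $(\bD,d_{J_1}^0)$ (and citing Theorem 2.6 of \cite{whitt80}), and your sketch is a correct, more detailed unpacking of that same adaptation. No substantive difference to report.
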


The following result characterizes the relatively compact subsets of $\bD([0,1];\bD)$, being the analogue of Theorem 12.3 of \cite{billingsley99}.

\begin{theorem}
\label{analogue-th-12-3}
A set $A \subset \bD([0,1];\bD)$ is relatively compact with respect to $d_{\bD}$ if and only if it satisfies the following three conditions: \\
(i) $\sup_{x \in A}\|x\|_{\bD}<\infty$;\\
(ii) $\lim_{\delta \to 0}\sup_{x \in A} \sup_{t \in [0,1]}w'\big(x(t),\delta\big)=0$;\\
(iii) $\lim_{\delta \to 0} \sup_{x \in A} w'_{\bD}(x,\delta)=0$.
\end{theorem}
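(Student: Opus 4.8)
The plan is to mimic the proof of Theorem 12.3 of \cite{billingsley99}, replacing $\bR$ everywhere by $(\bD,d_{J_1}^0)$. Since $d_{\bD}$ and $d_{\bD}^0$ are equivalent (Theorem \ref{bD-CSMS}), relative compactness is the same for both, and because $(\bD([0,1];\bD),d_{\bD}^0)$ is complete it may be tested by extracting Cauchy (equivalently, convergent) subsequences.

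\emph{Necessity.} Suppose $\overline{A}$ is compact. As $\|\cdot\|_{\bD}$ is $d_{\bD}$-continuous (Lemma \ref{dD-cont}), it is bounded on $\overline{A}$, which gives (i). If (iii) failed there would be $\e>0$, $\delta_n\downarrow 0$ and $x_n\in A$ with $w_{\bD}'(x_n,\delta_n)\geq\e$; passing to a subsequence $x_n\to x$ in $d_{\bD}$, and for each fixed $\delta$ monotonicity of $w_{\bD}'(x_n,\cdot)$ gives $w_{\bD}'(x_n,\delta)\geq w_{\bD}'(x_n,\delta_n)\geq\e$ once $\delta_n\leq\delta$, so upper semicontinuity of $w_{\bD}'(\cdot,\delta)$ (Lemma \ref{upper-semicont}) forces $w_{\bD}'(x,\delta)\geq\e$ for all $\delta$, contradicting \eqref{limit-w'-zero}. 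For (ii), put $B=\{x(t):x\in A,\,t\in[0,1]\}\subset\bD$; the idea is to show $B$ is relatively compact in $(\bD,d_{J_1}^0)$ and then apply Theorem 12.3 of \cite{billingsley99} to $B$, which yields $\sup_{y\in B}\|y\|<\infty$ (i.e.\ (i)) and $\lim_{\delta\to 0}\sup_{y\in B}w'(y,\delta)=0$ (i.e.\ (ii), since this supremum equals $\sup_{x\in A}\sup_{t\in[0,1]}w'(x(t),\delta)$). To prove $B$ relatively compact, take a sequence $x_n(t_n)$ in $B$; using relative compactness of $A$, pass to a subsequence with $x_n\to x$ in $d_{\bD}$ and $t_n\to t$; with $\lambda_n$ as in \eqref{conv-dD} and $u_n=\lambda_n^{-1}(t_n)$ one gets $d_{J_1}^0(x_n(t_n),x(u_n))\to 0$ (evaluate \eqref{conv-dD} at $u_n$) and $|u_n-t_n|\leq\sup_s|\lambda_n(s)-s|\to 0$; passing to a further subsequence along which $u_n$ is monotone, $x(u_n)$ converges in $J_1$ to $x(t)$ or $x(t-)$, so $x_n(t_n)$ converges in $J_1$. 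Thus every sequence in $B$ has a $J_1$-convergent subsequence, and since $(\bD,d_{J_1}^0)$ is complete, $B$ is relatively compact.

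\emph{Sufficiency, set-up.} Assume (i)--(iii). It suffices to show $A$ is totally bounded for $d_{\bD}^0$. By (i)--(ii) and Theorem 12.3 of \cite{billingsley99} applied to $B=\{x(t):x\in A,\,t\in[0,1]\}$, the set $K:=\overline{B}$ is compact in $(\bD,d_{J_1}^0)$. Fix $\eta>0$ and, using (iii) together with \eqref{limit-w'-zero}, choose $\delta\in(0,1)$ with $\sup_{x\in A}w_{\bD}'(x,\delta)<\eta$. For each $x\in A$ pick a $\delta$-sparse partition $0=t_0^x<\cdots<t_v^x=1$ (with $v<1/\delta$, uniformly in $x$) such that $w_{\bD}(x,[t_{i-1}^x,t_i^x))<\eta$ for all $i$, and let $\psi(x)\in\bD([0,1];\bD)$ be the step function equal to $x(t_{i-1}^x)$ on $[t_{i-1}^x,t_i^x)$ and to $x(1)$ at $1$. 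Then $d_{J_1}^0(x(t),\psi(x)(t))=d_{J_1}^0(x(t),x(t_{i-1}^x))\leq w_{\bD}(x,[t_{i-1}^x,t_i^x))<\eta$ for $t\in[t_{i-1}^x,t_i^x)$, so $\rho_{\bD}(x,\psi(x))\leq\eta$ and hence $d_{\bD}^0(x,\psi(x))\leq\eta$; moreover all values $x(t_i^x)$ lie in $K$.

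\emph{Sufficiency, the crux.} It remains to cover $\{\psi(x):x\in A\}$ by a finite set. Fix a finite $\e$-net $\{b_1,\dots,b_N\}$ of $K$ in $d_{J_1}^0$ and a grid $G=\{0=g_0<\cdots<g_M=1\}$ of mesh $<\e$, where $\e\in(0,\delta)$ is chosen small relative to $\eta$ and $\delta$. For $x\in A$, replace each $t_i^x$ by a nearest point of $G$ (these stay strictly increasing once $\e$ is small enough) and each value $x(t_{i-1}^x)$ by a nearest $b_j$; this produces a step function $\widetilde\psi(x)$ lying in a \emph{finite} collection. Taking a piecewise-linear time change $\lambda\in\Lambda$ that matches the original and the perturbed jump times, one finds $\|\lambda\|^0\leq C\e/\delta$ and $\rho_{\bD}(\psi(x),\widetilde\psi(x)\circ\lambda)\leq\e$ (the largest $d_{J_1}^0$-distance between corresponding values), hence $d_{\bD}^0(\psi(x),\widetilde\psi(x))\leq C'\e/\delta$. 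Choosing $\e$ so that $C'\e/\delta<\eta$ yields $d_{\bD}^0(x,\widetilde\psi(x))<2\eta$ for all $x\in A$, i.e.\ a finite $2\eta$-net; since $\eta>0$ was arbitrary, $A$ is totally bounded, hence relatively compact. The step I expect to be the main obstacle is exactly this quantitative estimate --- that a small perturbation of the jump times and of the $\bD$-valued levels of a step function perturbs it only slightly in $d_{\bD}^0$, with the time-change cost governed by $\e/\delta$ --- which relies on the $J_1$-topology's tolerance of small jump-time shifts and needs care when two perturbed jump times fall close together. (Alternatively, the sufficiency direction can be derived from the general relative-compactness criterion for $\bD([0,1];S)$ with $S$ Polish in \cite{whitt80}, applied with $S=(\bD,d_{J_1}^0)$, once one checks that (i)+(ii) is equivalent to relative compactness in $\bD$ of the sections at a dense set of times together with (iii).)
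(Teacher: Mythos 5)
Your proposal is correct and follows essentially the same route as the paper: necessity is obtained by showing the section set $U=\{x(t):x\in A,\,t\in[0,1]\}$ is relatively compact in $(\bD,J_1)$ (via the same $\lambda_n^{-1}(t_n)$ monotone-subsequence argument) together with upper semicontinuity of $w_{\bD}'(\cdot,\delta)$, and sufficiency by proving total boundedness under $d_{\bD}^0$ using step functions whose values lie in a finite $\e$-net of $U$. The only cosmetic differences are that the paper invokes Dini's theorem where you argue by contradiction and subsequences, and that it delegates the quantitative net construction you spell out (grid/time-change perturbation) to the sufficiency part of the proof of Theorem 12.3 of \cite{billingsley99}.
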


\noindent {\bf Proof:} Note that conditions {\em (i)} and {\em (ii)} are equivalent to saying that the set $U=\{x(t);x \in A,t \in [0,1]\}$ is relatively compact in $(\bD,J_1)$ (see Theorem 12.3 of \cite{billingsley99}).

Suppose that $A$ is relatively compact in $\bD([0,1];\bD)$. We first prove that $U$ is relatively compact in $(\bD,J_1)$. Let $\{x_n(t_n)\}_{n\geq 1}$ be an arbitrary sequence in $U$, with $x_n \in A$ and $t_n \in [0,1]$. Since $A$ is relatively compact, there exists a subsequence $N \subset \bN$ such that $d_{\bD}(x_n,x) \to 0$ as $n \to \infty,n \in N$. Let $(\lambda_n)_{n\geq 1} \subset \Lambda$ such that \eqref{conv-dD} hold as $n \to \infty,n \in N$. The sequence $(t_n)_{n \in N}$ has a monotone convergent sub-sequence $(t_n)_{n \in N'}$ with $N' \subset N$: either $t_n \uparrow t$ or $t_n \downarrow t$ as $n \to \infty,n \in N'$. Since $\lambda_n^{-1}$ is strictly increasing, either $\lambda_n^{-1}(t_n) \uparrow t$ or $\lambda_n^{-1}(t_n) \downarrow t$ as $n \to \infty,n\in N'$.
Therefore, either $x(\lambda_n^{-1}(t_n)) \stackrel{J_1}{\to} x(t-)$ or $x(\lambda_n^{-1}(t_n)) \stackrel{J_1}{\to} x(t)$ as $n \to \infty,n\in N'$. In the first case,
$$d_{J_1}^0\big(x_n(t_n),x(t-)\big) \leq d_{J_1}^0\big(x_n(t_n),x(\lambda_n^{-1}(t_n))\big)+d_{J_1}^0\big(x(\lambda_n^{-1}(t_n)),x(t-)\big)\to 0,$$
as $n \to \infty,n\in N'$. In the second case, $d_{J_1}^0\big(x_n(t_n),x(t)\big) \to 0$ as $n \to \infty,n\in N'$. This shows that the sequence $\{x_n(t_n)\}_{n\geq 1}$ has a $J_1$-convergence subsequence.

To prove {\em (iii)}, we apply Dini's theorem, as stated in Appendix M8 of \cite{billingsley99}.
Since $w_{\bD}'(\cdot,1/n)$ is upper semi-continuous for any $n$, and $w_{\bD}'(x,1/n) \downarrow 0$ for any $x \in \bD([0,1];\bD)$, this convergence is uniform on compact sets. Hence $\sup_{x \in A}w_{\bD}'(x,n^{-1})  \to 0$ as $n\to \infty$. Condition {\em (iii)} follows since
$w_{\bD}'(x,\cdot)$ is non-decreasing.

Next, suppose that the set $A$ satisfies conditions {\em (i)}-{\em (iii)}. Since $\bD([0,1];\bD)$ is complete with respect to $d_{\bD}^0$, the closure $\overline{A}$ of $A$ is also complete. To show that $\overline{A}$ is compact, it suffices to show that $\overline{A}$ is totally bounded with respect to $d_{\bD}^0$ (see Theorem of Appendix M5 of \cite{billingsley99}).
This follows as in the sufficiency part of the proof of Theorem 12.3 of \cite{billingsley99}, by choosing $H$ to be a finite $\e$-net of the set $U$ in $\bD$. $\Box$

\vspace{3mm}

To give a second characterization of the relatively compact subsets of $\bD([0,1];\bD)$, we consider the following modulus of continuity: for any $x \in \bD([0,1];\bD)$ and $\delta\in (0,1)$,
\begin{equation}
\label{def-wD-second}
w_{\bD}''(x,\delta)=\sup_{t_1\leq t\leq t_2,\, \, t_2-t_1 \leq \delta} \big(d_{J_1}^0(x(t),x(t_1))\wedge d_{J_1}^0(x(t_2),x(t)) \big).
\end{equation}

We have the following result.

\begin{lemma}[Lemma 2.2 of \cite{balan-saidani18}]
\label{dD-sum}
For any $x,y \in \bD([0,1];\bD)$, we have:
$$w_{\bD}''(x+y,\delta) \leq w_{\bD}''(x,\delta)+2\|y\|_{\bD}.$$
\end{lemma}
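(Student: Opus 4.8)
The plan is to mimic the classical argument for the real-valued modulus of continuity $w''$ (see the proof of the analogous inequality in Section 12 of \cite{billingsley99}), replacing absolute values by the metric $d_{J_1}^0$ and using the triangle inequality in $(\bD,d_{J_1}^0)$ together with \eqref{d-equal-norm}. First I would fix $\delta \in (0,1)$ and an arbitrary triple $t_1 \le t \le t_2$ with $t_2 - t_1 \le \delta$, and bound the quantity $d_{J_1}^0\big((x+y)(t),(x+y)(t_1)\big)$. Here the obstacle is that $d_{J_1}^0$ is \emph{not} translation-invariant and does not satisfy $d_{J_1}^0(a+b,c+d)\le d_{J_1}^0(a,c)+d_{J_1}^0(b,d)$ in general. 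The way around this is to route through the supremum norm at the right moment: by the triangle inequality,
\[
d_{J_1}^0\big(x(t)+y(t),\,x(t_1)+y(t_1)\big) \le d_{J_1}^0\big(x(t)+y(t),\,x(t_1)+y(t)\big) + d_{J_1}^0\big(x(t_1)+y(t),\,x(t_1)+y(t_1)\big).
\]
For the first term, one can time-change only $x(t)$ and $x(t_1)$ by the same $\lambda \in \Lambda$ (leaving $y(t)$ fixed but noting $\|(x(t)+y(t))\circ\lambda - (x(t_1)+y(t))\circ\lambda\| = \|x(t)\circ\lambda - x(t_1)\circ\lambda + (y(t)\circ\lambda - y(t))\| $); this does not cleanly work either, so instead I would use the simpler estimate $d_{J_1}^0(a+c,b+c) \le d_{J_1}^0(a,b) + (\text{error from }c)$ — but a cleaner route is available.

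The cleaner route, and the one I would actually carry out: use the two inequalities already recorded in the excerpt, namely $d_{J_1}^0(u,v) \le \|u-v\|$ from \eqref{dJ1-ineq} and the triangle inequality, in the following combination. Write
\[
d_{J_1}^0\big((x+y)(t),(x+y)(t_1)\big) \le d_{J_1}^0\big(x(t),x(t_1)\big) + \|y(t)-y(t_1)\| \le d_{J_1}^0\big(x(t),x(t_1)\big) + 2\|y\|_{\bD},
\]
where the first step inserts and removes $x(t_1)+y(t)$ as an intermediate point and applies \eqref{dJ1-ineq} to the $y$-difference, and the second uses $\|y(t)-y(t_1)\| \le \|y(t)\| + \|y(t_1)\| \le 2\|y\|_{\bD}$. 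Similarly,
\[
d_{J_1}^0\big((x+y)(t_2),(x+y)(t)\big) \le d_{J_1}^0\big(x(t_2),x(t)\big) + 2\|y\|_{\bD}.
\]
Taking the minimum of the two left-hand sides and using $\min(a+c,b+c) = \min(a,b)+c$ and $\min(a+c,b+c') \le \min(a,b) + \max(c,c')$ with $c=c'=2\|y\|_{\bD}$, I get
\[
d_{J_1}^0\big((x+y)(t),(x+y)(t_1)\big) \wedge d_{J_1}^0\big((x+y)(t_2),(x+y)(t)\big) \le \Big(d_{J_1}^0\big(x(t),x(t_1)\big) \wedge d_{J_1}^0\big(x(t_2),x(t)\big)\Big) + 2\|y\|_{\bD}.
\]

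Finally, I would take the supremum over all admissible triples $t_1 \le t \le t_2$ with $t_2 - t_1 \le \delta$ on both sides: the supremum of the right-hand side is $w_{\bD}''(x,\delta) + 2\|y\|_{\bD}$ by definition \eqref{def-wD-second}, and the supremum of the left-hand side is exactly $w_{\bD}''(x+y,\delta)$, which yields the claimed bound $w_{\bD}''(x+y,\delta) \le w_{\bD}''(x,\delta) + 2\|y\|_{\bD}$. The only genuinely delicate point is the replacement of the $y$-difference term by a metric-independent bound — one must resist the temptation to write $d_{J_1}^0(y(t),y(t_1))$ there (which would be symmetric but not obviously small and would not combine correctly through the minimum), and instead pass immediately to $\|y\|_{\bD}$ via \eqref{dJ1-ineq} and the triangle inequality for the norm; everything else is a routine bookkeeping of triangle inequalities and the elementary identity for minima.
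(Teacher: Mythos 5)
Your reduction to the single inequality
\begin{equation*}
d_{J_1}^0\big((x+y)(t),(x+y)(t_1)\big) \le d_{J_1}^0\big(x(t),x(t_1)\big) + 2\|y\|_{\bD},
\end{equation*}
followed by the $\min$/$\sup$ bookkeeping, is the right skeleton (the paper itself gives no proof here, quoting the lemma from the companion article), and the final bookkeeping step is correct. The gap is in how you justify the displayed inequality. Inserting $x(t_1)+y(t)$ as an intermediate point leaves you with the term $d_{J_1}^0\big(x(t)+y(t),\,x(t_1)+y(t)\big)$, and you silently bound it by $d_{J_1}^0\big(x(t),x(t_1)\big)$; that is exactly the translation-invariance $d_{J_1}^0(a+c,b+c)\le d_{J_1}^0(a,b)$ which you yourself flagged as "not cleanly working" two sentences earlier, and which is genuinely false. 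For instance, with $a=1_{[1/2,1]}$, $b=1_{[1/2+\e,1]}$ and $c=a$, one has $d_{J_1}^0(a,b)=O(\e)$, but $a+c$ has a single jump of size $2$ while $b+c$ has two jumps of size $1$, so $\|(a+c)-(b+c)\circ\lambda\|\ge 1$ for every $\lambda\in\Lambda$ and hence $d_{J_1}^0(a+c,b+c)\ge 1$. Adding a common summand can increase the Skorohod distance by as much as twice its sup-norm, so the intermediate-point route either fails or, if patched, produces the constant $4\|y\|_{\bD}$ rather than $2\|y\|_{\bD}$.

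The correct route avoids the intermediate point entirely and works once from the definition \eqref{def-dJ1}: for any $\lambda\in\Lambda$,
\begin{equation*}
\big\|(x(t)+y(t))-(x(t_1)+y(t_1))\circ\lambda\big\| \le \|x(t)-x(t_1)\circ\lambda\| + \|y(t)\| + \|y(t_1)\circ\lambda\|,
\end{equation*}
and $\|y(t_1)\circ\lambda\|=\|y(t_1)\|$ because $\lambda$ is a bijection of $[0,1]$. Since $a\vee(b+c)\le (a\vee b)+c$ for $c\ge 0$, taking $\|\lambda\|^0\vee(\cdot)$ and then the infimum over $\lambda$ yields
\begin{equation*}
d_{J_1}^0\big((x+y)(t),(x+y)(t_1)\big) \le d_{J_1}^0\big(x(t),x(t_1)\big) + \|y(t)\|+\|y(t_1)\| \le d_{J_1}^0\big(x(t),x(t_1)\big) + 2\|y\|_{\bD}.
\end{equation*}
Note that the additive term is $\|y(t)\|+\|y(t_1)\|$, not $\|y(t)-y(t_1)\|$: the time change $\lambda$ decouples $y(t)$ from $y(t_1)$, which is why the lemma is stated with $2\|y\|_{\bD}$ and not with an increment of $y$. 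With this substitution, the remainder of your argument goes through verbatim.
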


As in the classical case, it follows that $w_{\bD}''(x,\delta) \leq w_{\bD}'(x,\delta)$ (see the proof of (12.28) of \cite{billingsley99}). The following result is the analogue of Theorem 12.4 of \cite{billingsley99}.

\begin{theorem}
\label{analogue-th-12-4}
A set $A \subset \bD([0,1];\bD)$ is relatively compact with respect to $d_{\bD}$ if and only if it satisfies the following three conditions: \\
$(i)$ $\sup_{x \in A}\|x\|_{\bD}<\infty$;\\
$(ii')$
$$\left\{
\begin{array}{ll}
(a) & \lim_{\delta \to 0} \sup_{t \in [0,1]} w''(x(t),\delta) =0 \\
(b) & \lim_{\delta \to 0} \sup_{x \in A} \sup_{t \in [0,1]} |x(t,\delta),x(t,0)|=0 \\
(c) & \lim_{\delta \to 0} \sup_{x \in A} \sup_{t \in [0,1]} |x(t,1-),x(t,1-\delta)|=0;
\end{array}
\right.
$$
$(iii')$
$$\left\{
\begin{array}{ll}
(a) & \lim_{\delta \to 0} w_{\bD}''(x,\delta) =0 \\
(b) & \lim_{\delta \to 0} \sup_{x \in A} d_{J_1}^0(x(\delta),x(0))=0 \\
(c) & \lim_{\delta \to 0} \sup_{x \in A} d_{J_1}^0(x(1-),x(1-\delta))=0.
\end{array}
\right.
$$
\end{theorem}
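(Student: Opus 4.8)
The plan is to deduce the theorem from Theorem~\ref{analogue-th-12-3}: granted condition $(i)$, I will show that $(ii')$ and $(iii')$ together are equivalent to conditions $(ii)$ and $(iii)$ of Theorem~\ref{analogue-th-12-3}. Write $U=\{x(t):x\in A,\ t\in[0,1]\}\subset\bD$; as observed in the proof of Theorem~\ref{analogue-th-12-3}, $(i)$ and $(ii)$ together say precisely that $U$ is relatively compact in $(\bD,J_1)$, while $(i)$ alone says $\sup_{y\in U}\|y\|=\sup_{x\in A}\|x\|_{\bD}<\infty$.

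First I would record two elementary one-sided bounds. Given $y\in\bD$ and any $\delta$-sparse set $\{t_i\}_{0\le i\le v}$, the first block $[0,t_1)$ contains the point $\delta$ (since $t_1>\delta$) and the last block $[t_{v-1},1)$ contains $[1-\delta,1)$ (since $t_{v-1}<1-\delta$); hence $|y(\delta)-y(0)|$ and $|y(1-)-y(1-\delta)|$ are both at most $\max_{1\le i\le v}w(y,[t_{i-1},t_i))$, and taking the infimum over $\delta$-sparse sets gives $|y(\delta)-y(0)|\le w'(y,\delta)$ and $|y(1-)-y(1-\delta)|\le w'(y,\delta)$. The same argument carried out in the metric space $(\bD,d_{J_1}^0)$ instead of $\bR$ (using that $d_{J_1}^0(x(s),x(1-))\to0$ as $s\uparrow1$) gives $d_{J_1}^0(x(\delta),x(0))\le w_{\bD}'(x,\delta)$ and $d_{J_1}^0(x(1-),x(1-\delta))\le w_{\bD}'(x,\delta)$. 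Combining these with $w''(y,\delta)\le w'(y,\delta)$ (by (12.28) of \cite{billingsley99}) and $w_{\bD}''(x,\delta)\le w_{\bD}'(x,\delta)$ (noted before the statement), I immediately obtain $(ii)\Rightarrow(ii')$ — apply $(ii)$ to the functions $y=x(t)$, $x\in A$, $t\in[0,1]$ — and $(iii)\Rightarrow(iii')$. Together with Theorem~\ref{analogue-th-12-3}, this settles the ``only if'' part.

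For the ``if'' part I would invoke the reverse comparisons. For $\bR$-valued functions this is the inequality underlying Theorem~12.4 of \cite{billingsley99}: for small $\delta$ and a universal constant $C$,
$$w'(y,2\delta)\le C\Big(w''(y,\delta)+\sup_{0\le s\le\delta}|y(s)-y(0)|+\sup_{1-\delta\le s<1}|y(1-)-y(s)|\Big),$$
which may simply be quoted. Its analogue for the outer moduli,
$$w_{\bD}'(x,2\delta)\le C\Big(w_{\bD}''(x,\delta)+\sup_{0\le s\le\delta}d_{J_1}^0(x(s),x(0))+\sup_{1-\delta\le s<1}d_{J_1}^0(x(1-),x(s))\Big),$$
is obtained by repeating the chaining argument in the proof of Theorem~12.4 of \cite{billingsley99} verbatim, with each scalar increment $|u-v|$ replaced by $d_{J_1}^0(u,v)$ and using only the triangle inequality in $(\bD,d_{J_1}^0)$, exactly as in the proofs of Lemmas~\ref{upper-semicont} and~\ref{dD-sum}. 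To feed the pointwise endpoint conditions $(ii')(b)$ and $(iii')(b)$ into these estimates I would also note that putting $t_1=0$, $t=s$, $t_2=\delta$ in the definitions of $w''$ and $w_{\bD}''$ yields $\sup_{0\le s\le\delta}|y(s)-y(0)|\le w''(y,\delta)+|y(\delta)-y(0)|$ and the analogous bound for $w_{\bD}''$, with symmetric statements near $s=1$. Granting all this, assume $(i)$, $(ii')$, $(iii')$: the inner reverse estimate applied to $U$ gives $\sup_{x\in A}\sup_{t\in[0,1]}w'(x(t),2\delta)\to0$, i.e.\ $(ii)$, and the outer one gives $\sup_{x\in A}w_{\bD}'(x,2\delta)\to0$, i.e.\ $(iii)$; Theorem~\ref{analogue-th-12-3} then gives relative compactness of $A$.

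I expect the main obstacle to be the outer reverse comparison between $w_{\bD}'$ and $w_{\bD}''$: one must check that the argument for $\bR$-valued c\`adl\`ag functions — which locates the successive times at which the oscillation first attains a prescribed level and then shows consecutive such times are adequately separated — remains valid when the absolute value is replaced by the metric $d_{J_1}^0$ (which is symmetric and satisfies the triangle inequality, though it is not homogeneous). The rest is routine verification.
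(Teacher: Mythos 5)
Your proposal is correct and follows essentially the same route as the paper: both reduce the statement to Theorem~\ref{analogue-th-12-3} by establishing two-sided comparisons between $(w',w'_{\bD})$ and $(w'',w''_{\bD})$ together with the endpoint increments, i.e.\ the analogues of inequalities (12.31) and (12.32) of \cite{billingsley99}, with $|\cdot|$ replaced by $d_{J_1}^0$ for the outer modulus. Your additional observation that the pointwise endpoint conditions control the corresponding suprema via $w''$ and the triangle inequality is a useful detail that the paper leaves implicit.
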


\noindent {\bf Proof:} If $A$ is relatively compact, then conditions $(i)$-$(iii)$ of Theorem \ref{analogue-th-12-3} hold. Condition $(ii')$ follows by applying inequality (12.31) of \cite{billingsley99} to the function $x(t)\in \bD$, for any $t \in [0,1]$. Condition $(iii')$ follows by the following inequality (proved similarly to (12.31) of \cite{billingsley99}):
\begin{equation}
\label{analogue-12-31}
w_{\bD}''(x,\delta)\vee d_{J_1}^0\big(x(\delta),x(0) \big) \vee d_{J_1}^0\big(x(1-),x(1-\delta) \big)\leq w_{\bD}'(x,2\delta)
\end{equation}

Suppose that conditions $(i)$, $(ii')$ and $(iii')$ hold. The fact that $A$ is relatively compact will follow by Theorem \ref{analogue-th-12-3}, once we show that conditions $(ii)$ and $(iii)$ of  this theorem hold. Condition $(ii)$ follows from $(ii')$ by applying inequality (12.32) of \cite{billingsley99} to the function $x(t) \in \bD$ for any $t \in [0,1]$. Condition $(iii')$ follows by the following inequality
\begin{equation}
\label{analogue-12-32}
w_{\bD}'(x,\delta/2) \leq 12 \{w_{\bD}''(x,\delta)+ d_{J_1}^0\big(x(\delta),x(0) \big)+
d_{J_1}^0\big(x(1-),x(1-\delta) \big)\}.
\end{equation}
This is proved similarly to inequality (12.32) of \cite{billingsley99}, using the triangle inequality in $\bD$ and the fact that $x_n \stackrel{J_1}{\to}x$ implies that $d_{J_1}^0(x_n,y) \to d_{J_1}^0(x,y)$ for any $y \in \bD$.
$\Box$

\vspace{3mm}

We conclude this subsection with a discussion about measurability and finite-dimensional sets in $\bD([0,1];\bD)$.
Let $\cD_{\bD}$ be the Borel $\sigma$-field of $\bD([0,1];\bD)$ with respect to $d_{\bD}$. For any $t\in [0,1]$, we let $\pi_{t}^{\bD}:\bD([0,1];\bD) \to \bD$ be the projection given by $\pi_t^{\bD}(x)=x(t)$.
By Lemma 2.3 of \cite{whitt80}, $\pi_{t}^{\bD}$ is $\cD_{\bD}/\cD$-measurable for any $t \in [0,1]$. By Theorem 2.7 of \cite{whitt80}, $\cD_{\bD}$ coincides with the $\sigma$-field generated by the projections $\pi_{t}^{\bD}$ for $t \in [0,1]$. Similarly to the classical case, the function $\pi_t^{\bD}$ has the following continuity properties.

\begin{lemma}
a) $\pi_0^{\bD}$ and $\pi_1^{\bD}$ are continuous with respect to $d_{\bD}$. \\
b) For any $t \in (0,1)$, $\pi_t^{\bD}$ is continuous at $x$ with respect to $d_{\bD}$ if and only if $x$ is continuous at $t$ with respect to $J_1$.
\end{lemma}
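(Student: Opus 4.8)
The plan is to mimic the classical argument from Section~12 of \cite{billingsley99}, replacing the absolute value on $\bR$ by the metric $d_{J_1}^0$ on $\bD$, and to use the characterization \eqref{conv-dD} of convergence in $d_{\bD}$ together with Lemma~\ref{properties-dD}. Since $\bD([0,1];\bD)$ and $(\bD,d_{J_1}^0)$ are metric spaces, it suffices to verify sequential continuity throughout.

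For part a), I would use the fact that every $\lambda\in\Lambda$ is a strictly increasing bijection of $[0,1]$ onto itself, so that $\lambda(0)=0$ and $\lambda(1)=1$. If $d_{\bD}(x_n,x)\to 0$ and $(\lambda_n)_{n\geq 1}\subset\Lambda$ realizes \eqref{conv-dD}, then evaluating the second relation in \eqref{conv-dD} at $t=0$ and at $t=1$ gives $d_{J_1}^0(x_n(0),x(0))\to 0$ and $d_{J_1}^0(x_n(1),x(1))\to 0$, that is, $\pi_0^{\bD}(x_n)\stackrel{J_1}{\to}\pi_0^{\bD}(x)$ and $\pi_1^{\bD}(x_n)\stackrel{J_1}{\to}\pi_1^{\bD}(x)$. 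As $x$ is arbitrary, this proves continuity of $\pi_0^{\bD}$ and $\pi_1^{\bD}$ at every point.

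For the ``if'' implication in part b): if $x$ is continuous at $t$ with respect to $J_1$, then $t$ is a continuity point of $x$, so Lemma~\ref{properties-dD}(a) yields $x_n(t)\stackrel{J_1}{\to}x(t)$ whenever $d_{\bD}(x_n,x)\to 0$, which is exactly continuity of $\pi_t^{\bD}$ at $x$. For the ``only if'' implication I would argue by contraposition: assuming $x$ is discontinuous at $t\in(0,1)$, i.e. $d_{J_1}^0(x(t-),x(t))>0$, I would construct $x_n\to x$ in $d_{\bD}$ with $x_n(t)\not\stackrel{J_1}{\to}x(t)$. Concretely, for $n$ large enough that $t-\tfrac1n>0$, let $\lambda_n\in\Lambda$ be the piecewise-linear bijection of $[0,1]$ that is linear on $[0,t]$ and on $[t,1]$ with $\lambda_n(t)=t-\tfrac1n$, so that $\|\lambda_n-e\|\le\tfrac1n\to 0$, and set $x_n=x\circ\lambda_n\in\bD([0,1];\bD)$. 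Taking $\lambda=\lambda_n$ in \eqref{def-d-D} gives $\rho_{\bD}(x_n,x\circ\lambda_n)=0$, hence $d_{\bD}(x_n,x)\le\|\lambda_n-e\|\to 0$; on the other hand $x_n(t)=x(t-\tfrac1n)\stackrel{J_1}{\to}x(t-)$ by property {\em (ii)} in the definition of $\bD([0,1];\bD)$, and $x(t-)\ne x(t)$ in $(\bD,d_{J_1}^0)$, so $x_n(t)$ does not $J_1$-converge to $x(t)$. Therefore $\pi_t^{\bD}$ is discontinuous at $x$.

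I do not expect a genuine obstacle here: all the tools (the description \eqref{conv-dD} of $d_{\bD}$-convergence, Lemma~\ref{properties-dD}, and the existence of $J_1$-left limits built into the definition of $\bD([0,1];\bD)$) are already in place, and the remaining verifications—that $\lambda_n$ lies in $\Lambda$, that $x\circ\lambda_n$ is again in $\bD([0,1];\bD)$, and the elementary distance estimate—are routine. The one conceptual point worth flagging is that the time-change in the necessity argument pushes the value at $t$ toward the \emph{left} limit $x(t-)$ and relies on being able to move $t$ slightly; this is impossible at the endpoints, since $\lambda(0)=0$ and $\lambda(1)=1$ for every $\lambda\in\Lambda$, which is precisely why b) is restricted to $t\in(0,1)$ while a) holds unconditionally.
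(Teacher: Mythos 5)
Your proof is correct and follows essentially the same route as the paper's: part a) via $\lambda_n(0)=0$, $\lambda_n(1)=1$ in \eqref{conv-dD}, the sufficiency in b) via Lemma \ref{properties-dD}.a), and the necessity via the same piecewise-linear time change $\lambda_n$ with $\lambda_n(t)=t-1/n$ and $x_n=x\circ\lambda_n$. No gaps; the extra remark about why the endpoint case differs is a nice clarification but not a departure from the paper's argument.
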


\noindent {\bf Proof:} a) Assume that $d_{\bD}(x_n,x) \to 0$. Let $(\lambda_n)_{n\geq 1} \subset \Lambda$ be such that \eqref{conv-dD} holds. In particular, since $\lambda_n(0)=0$, we obtain: $d_{J_1}^0(x_n(0),x(0)) \to 0$. This shows that $\pi_0^{\bD}(x_n) \stackrel{J_1}{\to} \pi_0(x)$. Similarly, $\pi_1^{\bD}(x_n) \stackrel{J_1}{\to} \pi_1(x)$.

b) Suppose that $x$ is continuous at $t$ with respect to $J_1$. Assume that $d_{\bD}(x_n,x) \to 0$. Then $\pi_t^{\bD}(x_n) \stackrel{J_1}{\to} \pi_t^{\bD}(x)$, by Lemma \ref{properties-dD}.a).
Suppose next that $x$ is discontinuous at $t$ with respect to $J_1$, i.e. $d_{J_1}^0(x(t-),x(t))>0$. Let $\lambda_n \in \Lambda$ be such that $\lambda_n(t)=t-1/n$, and $\lambda$ is linear on $[0,t]$ and $[t,1]$. Define $x_n(s)=x(\lambda_n(s))$. Then $d_{\bD}(x_n,x) \to 0$, and $\pi_t^{\bD}(x_n)=x_n(t)=x(\lambda_n(t))=x(t-1/n) \stackrel{J_1}{\to} x(t-)$, and so $\pi_t^{\bD}(x_n)$ does not converge in $J_1$ to $x(t)$.
This shows that $\pi_t^{\bD}$ is discontinuous at $x$ with respect to $d_{\bD}$.
$\Box$

\vspace{3mm}

For an arbitrary set $T \subset [0,1]$, we let $\cD_{f,T}^{\bD}$ be the class of finite-dimensional sets of the form $(\pi_{t_1,\ldots,t_k}^{\bD})^{-1}(H)$ for some $0\leq t_1<\ldots<t_k\leq 1$, $t_i \in T$, $H \in \cD^k$ and $k\geq 1$. Note that the $\sigma$-field generated by $\cD_{f,T}^{\bD}$ coincides with $\sigma\{\pi_t^{\bD};t\in T\}$, the minimal $\sigma$-field with respect to which the maps $\pi_t^{\bD},t\in T$ are measurable.

\begin{theorem}
\label{separating-th}
If $T \subset [0,1]$ is such that $1 \in T$ and $T$ is dense in $[0,1]$, then:\\
a) $\cD_{\bD}$ is the $\sigma$-field generated by $\cD_{f,T}^{\bD}$;\\
b) $\cD_{f,T}^{\bD}$ is a separating class of $\cD_{\bD}$, i.e. if $P$ and $Q$ are two probability measures on $(\bD,\cD_{\bD})$ such that $P(A)=Q(A)$ for any $A \in \cD_{f,T}^{\bD}$, then $P=Q$.
\end{theorem}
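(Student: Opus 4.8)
The plan is to deduce everything from two facts already recorded in the excerpt: $\cD_{\bD}=\sigma\{\pi_t^{\bD}:t\in[0,1]\}$ (Theorem 2.7 of \cite{whitt80}) and $\sigma(\cD_{f,T}^{\bD})=\sigma\{\pi_t^{\bD}:t\in T\}$. Thus part a) reduces to proving the identity
$$\sigma\{\pi_t^{\bD}:t\in[0,1]\}=\sigma\{\pi_t^{\bD}:t\in T\},$$
and part b) will then follow from a) together with Dynkin's $\pi$--$\lambda$ theorem, provided $\cD_{f,T}^{\bD}$ is a $\pi$-system. This mirrors the proof of Theorem 12.5 of \cite{billingsley99}.

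For a), the inclusion $\supseteq$ is immediate. For $\subseteq$ it suffices to show that $\pi_t^{\bD}$ is measurable with respect to $\sigma\{\pi_s^{\bD}:s\in T\}$ for every $t\in[0,1]$, which is trivial when $t\in T$. If $t\notin T$ then $t<1$ because $1\in T$, so density of $T$ lets me choose $t_k\in T$ with $t<t_k$ for all $k$ and $t_k\to t$. By right-continuity with respect to $J_1$ (property (i) in the definition of $\bD([0,1];\bD)$), $d_{J_1}^0(\pi_{t_k}^{\bD}(x),\pi_t^{\bD}(x))\to 0$ for every $x$, so $\pi_t^{\bD}$ is the pointwise limit of the maps $\pi_{t_k}^{\bD}$ in the metric space $(\bD,d_{J_1}^0)$. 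Since a pointwise limit of measurable maps into a metric space is measurable, $\pi_t^{\bD}$ is $\sigma\{\pi_s^{\bD}:s\in T\}/\cD$-measurable, which proves a).

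For b), I first check that $\cD_{f,T}^{\bD}$ is a $\pi$-system containing $\bD([0,1];\bD)$ (the latter by taking $k=1$, $t_1=1\in T$, $H=\bD$). Given two members $(\pi_{t_1,\ldots,t_k}^{\bD})^{-1}(H)$ and $(\pi_{s_1,\ldots,s_m}^{\bD})^{-1}(H')$, list $\{t_1,\ldots,t_k\}\cup\{s_1,\ldots,s_m\}\subseteq T$ in increasing order as $u_1<\ldots<u_r$; writing $p:\bD^r\to\bD^k$ and $q:\bD^r\to\bD^m$ for the corresponding coordinate-selection maps (which are $\cD^r/\cD^k$- and $\cD^r/\cD^m$-measurable), the intersection equals $(\pi_{u_1,\ldots,u_r}^{\bD})^{-1}\big(p^{-1}(H)\cap q^{-1}(H')\big)$ with $p^{-1}(H)\cap q^{-1}(H')\in\cD^r$, again a member of $\cD_{f,T}^{\bD}$. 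Since by a) this $\pi$-system generates $\cD_{\bD}$, any two probability measures on $(\bD([0,1];\bD),\cD_{\bD})$ agreeing on $\cD_{f,T}^{\bD}$ agree on all of $\cD_{\bD}$ by Dynkin's theorem, giving b).

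The only step that is not pure bookkeeping is the measurability argument in a); the role of the hypotheses is transparent, since density of $T$ supplies the approximating sequence $t_k\to t$ from the right, while the assumption $1\in T$ is genuinely needed — a function in $\bD([0,1];\bD)$ may be $J_1$-discontinuous at $t=1$, so $\pi_1^{\bD}$ cannot be recovered as a one-sided limit of $\pi_t^{\bD}$ with $t<1$, exactly as in the classical case.
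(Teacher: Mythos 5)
Your proof is correct, but part a) follows a genuinely different route from the paper's. The paper proves the inclusion $\cD_{\bD}\subset\sigma\{\pi_t^{\bD};t\in T\}$ by showing that the identity map on $\bD([0,1];\bD)$ is the pointwise $d_{\bD}$-limit of the piecewise-constant interpolation maps $A_{\sigma_m}$ built from partition points in $T$; this requires the approximation estimate \eqref{lemma3-p127}, the fact that $w_{\bD}'(x,\delta)\to 0$ as $\delta\to 0$, and the continuity of the reconstruction maps $V_{\sigma}$. You instead take the identity $\cD_{\bD}=\sigma\{\pi_t^{\bD};t\in[0,1]\}$ (Whitt's Theorem 2.7, which the paper quotes as known) as your starting point and reduce the problem to showing that each single projection $\pi_t^{\bD}$ with $t\notin T$ is a pointwise $J_1$-limit of projections $\pi_{t_k}^{\bD}$ with $t_k\in T$, $t_k\downarrow t$, using right-continuity and the standard fact that pointwise limits of measurable maps into a metric space are measurable. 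Your argument is shorter and avoids the modulus-of-continuity machinery entirely, at the cost of leaning on the externally quoted identification of $\cD_{\bD}$ with the projection $\sigma$-field; the paper's route is self-contained on that point and, as a by-product, produces the approximation of the identity by finite-dimensional maps, which is reusable elsewhere. Part b) is the same in both: you simply spell out the $\pi$-system verification that the paper leaves implicit when citing Theorem 3.3 of \cite{billingsley95}. Your closing observation that $1\in T$ is genuinely needed (since $\pi_1^{\bD}$ cannot be recovered as a left limit) is accurate and matches the classical case.
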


\noindent {\bf Proof:} a) Since $\pi_{t}^{\bD}$ is $\cD_{\bD}$-measurable, $\sigma\{\pi_t^{\bD};t\in T\} \subset \cD_{\bD}$. To prove the other inclusion, it suffices to show that the identity $i: \bD([0,1];\bD) \to \bD([0,1];\bD)$ given by $i(x)=x$ is $\sigma\{\pi_t^{\bD};t \in [0,1]\}/\cD_{\bD}$-measurable.
For this, we use the same argument as in the proof of Theorem 12.5.(iii) of \cite{billingsley99}. For any $\sigma=\{t_i\}_{i=0,\ldots,k}$ such that $0=t_0<t_1<\ldots<t_k=1$, we define the map $A_{\sigma}: \bD([0,1];\bD) \to \bD([0,1];\bD)$ by $A_{\sigma}(x)=\sum_{i=1}^{k} x(t_{i-1}) 1_{[t_{i-1},t_i)}+x(1) 1_{\{1\}}(t)$.
Similarly to Lemma 3 (page 127) of \cite{billingsley99}, it can be proved that
\begin{equation}
\label{lemma3-p127}
\mbox{$\max_{1\leq i\leq k}(t_i-t_{i-1}) \leq \delta$ implies that $d_{\bD}(A_{\sigma}(x),x)\leq \delta \vee w_{\bD}'(x,\delta)$.}
\end{equation}

For any $\sigma$ as above, we consider also the map $V_{\sigma}:\bD^{k+1} \to \bD([0,1];\bD)$ given by
$V_{\sigma}(\alpha)=\sum_{i=1}^{k} \alpha_{i-1}1_{[t_{i-1},t_i)}(t)+\alpha_k 1_{\{1\}}(t)$, for $\alpha=(\alpha_0,\ldots,\alpha_k) \in \bD^{k+1}$.

The function $V_{\sigma}:\bD^{k+1} \to \bD([0,1];\bD)$ is $\rho_{\bD}$-continuous (hence $d_{\bD}$-continuous), where $\bD^{k+1}$ is endowed with the product topology: if $\alpha^n,\alpha \in \bD^k$ are such that $\alpha_i^n \stackrel{J_1}{\to} \alpha_i$ as $n \to \infty$, for $i=0,\ldots,k$, then
$$\rho_{\bD}(V_{\sigma}(\alpha^n),V_{\sigma}(\alpha))=\sup_{t \in [0,1]} d_{J_1}^0\big(V_m(\alpha^n)(t),V_m(\alpha)(t) \big)=\max_{0\leq i\leq k} d_{J_1}^0(\alpha_i^n,\alpha_i)\to 0.$$
It follows that $V_{\sigma}$ is $\cD^{k+1}/\bD_{\bD}$-measurable. If $t_i \in T$ for all $i$, then $A_{\sigma}$ is $\sigma\{\pi_t^{\bD};t \in T\}/\cD_{\bD}$-measurable, since $A_{\sigma}=V_{\sigma} \circ \pi_{t_0,\ldots,t_k}^{\bD}$ and $\pi_{t_0,\ldots,t_k}^{\bD}$ is $\sigma\{\pi_t^{\bD};t \in T\}/\cD_{\bD}^{k+1}$-measurable.

For any $m \geq 1$, choose $\sigma_m=\{t_i^m\}_{i=0,\ldots,k_m}$ such that $t_i^m \in T$ and $\max_{i}(t_i^m-t_{i-1}^m)<1/m$. By \eqref{limit-w'-zero} and \eqref{lemma3-p127}, it follows that $d_{\bD}\big(A_{\sigma_m}(x),x \big) \to 0$ as $m \to \infty$. This proves that the identity map $i$ is the pointwise limit (with respect to $d_{\bD}$) of the sequence $(A_{\sigma_m})_{m\geq 1}$ of $\sigma\{\pi_t^{\bD};t \in T\}/\cD_{\bD}$-measurable maps. Since $\bD_{\bD}$ is the Borel $\sigma$-field corresponding to $d_{\bD}$, it the map $i$ is also $\sigma\{\pi_t^{\bD};t \in T\}/\cD_{\bD}$-measurable.

b) This follows by Theorem 3.3 of \cite{billingsley95}, since $\cD_{f,T}^{\bD}$ is a $\pi$-system generating $\cD_{\bD}$. $\Box$

\vspace{3mm}
The characterization of tightness of probability measures on $\bD([0,1];\bD)$ given in Section \ref{section-tight}
relies on certain events involving the functions $w_{\bD}'(\cdot,\delta)$ and $w_{\bD}''(\cdot,\delta)$.
Measurability of these functions is essential for this purpose. Before establishing this, we need the following simple result (which is valid in any metric space).

\begin{lemma}
\label{cont-Phi}
The map $\Phi: \bD \times \bD \to [0,\infty)$ given by $\Phi(x,y)=d_{J_1}^0(x,y)$ is continuous with respect to the product of $J_1$-topologies on $\bD \times \bD$.
\end{lemma}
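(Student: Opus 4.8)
The plan is to show that $\Phi$ is continuous by proving the elementary inequality
$$\bigl| d_{J_1}^0(x,y) - d_{J_1}^0(x',y') \bigr| \leq d_{J_1}^0(x,x') + d_{J_1}^0(y,y') \quad \text{for all } x,x',y,y' \in \bD,$$
which is just the statement that any metric is $1$-Lipschitz in each argument, applied twice via the triangle inequality. Concretely, I would write $d_{J_1}^0(x,y) \leq d_{J_1}^0(x,x') + d_{J_1}^0(x',y') + d_{J_1}^0(y',y)$ and the symmetric inequality with the roles of the primed and unprimed pairs swapped; subtracting gives the displayed bound. Since the topology induced on $\bD$ by $d_{J_1}^0$ is the $J_1$-topology, if $(x_n,y_n) \to (x,y)$ in the product of $J_1$-topologies then $d_{J_1}^0(x_n,x) \to 0$ and $d_{J_1}^0(y_n,y) \to 0$, so the inequality forces $\Phi(x_n,y_n) = d_{J_1}^0(x_n,y_n) \to d_{J_1}^0(x,y) = \Phi(x,y)$.

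A minor point worth addressing explicitly is that convergence in the product topology on $\bD \times \bD$ is equivalent to coordinatewise $J_1$-convergence, and that it suffices to check sequential continuity here because $(\bD, d_{J_1}^0)$ is metrizable, hence so is $\bD \times \bD$ with the product topology. One can even avoid sequences entirely: the map $\Psi: \bD \times \bD \to [0,\infty)$, $\Psi(x,y) = d_{J_1}^0(x,x') + d_{J_1}^0(y,y')$ controls $|\Phi(x,y) - \Phi(x',y')|$, so $\Phi$ is in fact (Lipschitz-)continuous with respect to the sum metric on the product, which generates the product topology.

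There is essentially no obstacle here; the only thing to be careful about is not to confuse $d_{J_1}^0$ with $d_{J_1}$ (they are only equivalent, not equal), and to phrase everything in terms of the complete metric $d_{J_1}^0$ as stated. The triangle inequality is valid for $d_{J_1}^0$ because it is a genuine metric on $\bD$ by Theorem \ref{bD-CSMS}'s one-dimensional analogue (cited in the excerpt from \cite{billingsley99}), so the argument is complete once the two triangle inequalities are written down and combined.
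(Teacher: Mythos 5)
Your argument is correct and is essentially the same as the paper's: the paper also establishes $|d_{J_1}^0(x_n,y_n)-d_{J_1}^0(x,y)|\leq d_{J_1}^0(x_n,x)+d_{J_1}^0(y_n,y)$ via the triangle inequality (splitting through the intermediate quantity $d_{J_1}^0(x,y_n)$) and then invokes coordinatewise $J_1$-convergence. No gaps.
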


\noindent {\bf Proof:} If $x_n \stackrel{J_1}{\to} x$ and $y_n \stackrel{J_1}{\to}y$, then $d_{J_1}^0(x_n,y_n) \to d_{J_1}^0(x,y)$ since
\begin{align*}
|d_{J_1}^0(x_n,y_n) - d_{J_1}^0(x,y)| & \leq |d_{J_1}^0(x_n,y_n) - d_{J_1}^0(x,y_n)|+
|d_{J_1}^0(x,y_n) - d_{J_1}^0(x,y)| \\
&\leq d_{J_1}^0(x_n,x)+d_{J_1}^0(y_n,y).
\end{align*}
$\Box$

\begin{lemma}
The functions $w_{\bD}'(\cdot,\delta)$ and $w_{\bD}''(\cdot,\delta)$ are $\cD_{\bD}$-measurable.
\end{lemma}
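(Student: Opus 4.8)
The plan is to handle the two functions by different routes. For $w_{\bD}'(\cdot,\delta)$, I would use Lemma~\ref{upper-semicont}: since $w'_{\bD}(\cdot,\delta)$ is upper semi-continuous on $\bD([0,1];\bD)$ equipped with $d_{\bD}$, the set $\{x:w_{\bD}'(x,\delta)<c\}$ is open — hence a member of $\cD_{\bD}$ — for every $c\in\bR$; since the intervals $(-\infty,c)$ generate $\mathcal{B}(\bR)$, this already shows that $w_{\bD}'(\cdot,\delta)$ is $\cD_{\bD}$-measurable, with no further work.

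For $w_{\bD}''(\cdot,\delta)$, the idea is to exhibit it as a countable supremum of $\cD_{\bD}$-measurable functions. First, for each fixed triple $t_1\le t\le t_2$ in $[0,1]$, the map
$$x\longmapsto d_{J_1}^0\big(x(t),x(t_1)\big)\wedge d_{J_1}^0\big(x(t_2),x(t)\big)=\Phi\big(\pi_t^{\bD}(x),\pi_{t_1}^{\bD}(x)\big)\wedge\Phi\big(\pi_{t_2}^{\bD}(x),\pi_t^{\bD}(x)\big)$$
is $\cD_{\bD}$-measurable: each projection $\pi_s^{\bD}$ is $\cD_{\bD}/\cD$-measurable (Lemma 2.3 of \cite{whitt80}), $\Phi$ is continuous by Lemma~\ref{cont-Phi} and hence $\cD\otimes\cD$-measurable (the Borel $\sigma$-field of $(\bD,J_1)\times(\bD,J_1)$ equals $\cD\otimes\cD$ because $\bD$ is separable), and $\wedge$ is continuous. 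It then remains to establish the identity
$$w_{\bD}''(x,\delta)=\sup\Big\{d_{J_1}^0\big(x(s),x(s_1)\big)\wedge d_{J_1}^0\big(x(s_2),x(s)\big):\ s_1,s,s_2\in\mathbb{Q}\cap[0,1],\ s_1<s<s_2,\ s_2-s_1<\delta\Big\}$$
for every $x\in\bD([0,1];\bD)$; its right-hand side is a countable supremum of the measurable maps above, hence $\cD_{\bD}$-measurable, and the claim follows.

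To prove this identity, the inequality ``$\ge$'' is immediate. For ``$\le$'', fix $t_1\le t\le t_2$ with $t_2-t_1\le\delta$; if $t_1=t$ or $t=t_2$ the corresponding term is $0$, so assume $t_1<t<t_2$. Pick $\e\in(0,t-t_1)$. Using right-continuity of $x$ with respect to $J_1$ at the point $t_1<1$, together with the continuity of $\Phi$ (Lemma~\ref{cont-Phi}) and of $\wedge$, one gets
$$d_{J_1}^0\big(x(t),x(t_1+\e)\big)\wedge d_{J_1}^0\big(x(t_2),x(t)\big)\ \longrightarrow\ d_{J_1}^0\big(x(t),x(t_1)\big)\wedge d_{J_1}^0\big(x(t_2),x(t)\big)\quad\text{as }\e\downarrow0,$$
while now $t_2-(t_1+\e)<\delta$. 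For fixed such $\e$, one then approximates $t_1+\e$, $t$ and, if $t_2<1$, also $t_2$ from the right by rationals $s_1\downarrow t_1+\e$, $s\downarrow t$, $s_2\downarrow t_2$ (and one keeps $s_2=1$ when $t_2=1$, which is legitimate since $1\in\mathbb{Q}$ and no approximation is available or needed there); for these rationals close enough one has $s_1<s<s_2$ and $s_2-s_1<\delta$, and, again by continuity of $\Phi$ and $\wedge$, $d_{J_1}^0(x(s),x(s_1))\wedge d_{J_1}^0(x(s_2),x(s))\to d_{J_1}^0(x(t),x(t_1+\e))\wedge d_{J_1}^0(x(t_2),x(t))$. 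Thus the fixed term is a limit of members of the supremum on the right-hand side, and taking the supremum over all $t_1,t,t_2$ completes the argument. The step that requires care — essentially the only obstacle — is this double passage to the limit: approximating all three indices directly from the right would only give $s_2-s_1<\delta+\e$. The preliminary upward shift of the left endpoint $t_1$ — which is exactly where right-continuity of $x$ is used — is what restores the constraint $s_2-s_1<\delta$; the right endpoint $t_2=1$, at which no right-continuity is available, is accommodated by retaining $s_2=1$.
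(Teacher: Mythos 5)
Your proof is correct and follows essentially the same route as the paper: upper semicontinuity (Lemma~\ref{upper-semicont}) gives measurability of $w_{\bD}'(\cdot,\delta)$ via open sublevel sets, and $w_{\bD}''(\cdot,\delta)$ is written as a countable supremum over rational triples of the $\cD_{\bD}$-measurable maps $x\mapsto d_{J_1}^0(x(t),x(t_1))\wedge d_{J_1}^0(x(t_2),x(t))$ built from Lemma~\ref{cont-Phi} and the projections. The only difference is that you carefully justify the reduction to rational triples (the preliminary $\e$-shift of the left endpoint, using right-continuity, so that the constraint $s_2-s_1<\delta$ survives the right-approximation), a step the paper simply asserts with ``we may take $t_1,t,t_2$ to be rational numbers.''
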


\noindent {\bf Proof:} The measurability of $w_{\bD}'(\cdot,\delta)$ follows by Lemma \ref{upper-semicont}. For
$w_{\bD}''(\cdot,\delta)$, note that in the definition \eqref{def-wD-second} of $w_{\bD}''(x,\delta)$, we may take $t_1,t,t_2$ to be rational numbers. By Lemma \ref{cont-Phi}, $\Phi$ is $\cD \times \cD$-measurable, and so the map $\Phi \circ \pi_{t,t_1}^{\bD}$ given by $x \mapsto d_{J_1}^0(x(t),x(t_1))$ is $\cD_{\bD}$-measurable, for any $t_1,t \in [0,1]$. Therefore, the map
$x \mapsto d_{J_1}^0(x(t),x(t_1)) \wedge d_{J_1}^0(x(t_2),x(t))$ is $\cD_{\bD}$-measurable, for any rational numbers $t_1,t,t_2 \in [0,1]$ with $t_1\leq t \leq t_2$. The conclusion follows since the supremum of a countable collection of measurable functions is measurable. $\Box$

\vspace{3mm}

Finally, we recall the definition of a random element in $\bD([0,1];\bD)$.

\begin{definition}
{\rm Let $(\Omega,\cF,P)$ be a probability space. A map $X: \Omega \to \bD([0,1];\bD)$ is called a {\em random element} in $\bD([0,1];\bD)$ if $X$ is $\cF/\cD_{\bD}$-measurable, i.e. $X(t)$ is $\cF/\cD$-measurable for any $t \in [0,1]$.
}
\end{definition}

\section{Weak convergence and Tightness}
\label{section-tight}

In this section, we study the weak convergence and tightness of probability measures on the space $\big(\bD([0,1];\bD),\cD_{\bD}\big)$, following the discussion contained in Section 13 of \cite{billingsley99} for probability measures on $(\bD,\cD)$. We provide some of the details which are missing from \cite{billingsley99}), since they are more delicate and require special attention in our situation.

Recall that if $(P_n)_{n \geq 1}$ and $P$ are probability measures on $\big(\bD([0,1];\bD),\cD_{\bD}\big)$, we say that $(P_n)_{n \geq 1}$ {\em converges weakly} to $P$ if $\int f dP_n \to \int f dP$ for any $d_{\bD}$-continuous bounded function $f:\bD([0,1];\bD) \to \bR$. In this case, we write $P_n \stackrel{w}{\to} P$. Since $\bD([0,1];\bD)$ is separable, there is a distance on the set of probability measures on $\big(\bD([0,1];\bD),\cD_{\bD}\big)$ (called the {\em Prohorov distance}), which gives rise to the topology of weak convergence (see page 72 of \cite{billingsley99}).

If $(X_n)_{n\geq 1}$ and $X$ are random elements in $\bD([0,1];\bD)$ (possibly defined on different probability spaces) with respective laws denoted by $(P_n)_{n\geq 1}$ and $P$, we say that $(X_n)_{n\geq 1}$ {\em converges in distribution} to $X$ if $P_n \stackrel{w}{\to}P$. In this case, we write $X_n \stackrel{d}{\to}X$.

For any probability measure $P$ on $\big(\bD([0,1];\bD),\cD_{\bD}\big)$, we let $T_P$ be the set of $t \in [0,1]$ for which the projection $\pi_t^{\bD}$ is $d_{\bD}$-continuous a.s. with respect to $P$. Note that $0,1 \in T_P$. If $t \in (0,1)$, then $t \in T_P$ if and only if $P(J_t)=0$, where $J_t=\{x \in \bD([0,1];\bD); t \in {\rm Disc}(x)\}$.

Using the same argument as in the classical case (page 238 of \cite{billingsley99}), it can be shown that $P(J_t)>0$ is possible for at most countably many $t \in (0,1)$. Hence, the complement of $T_P$ in $[0,1]$ is countable. The following result follows by the continuous mapping theorem.

\begin{lemma}
\label{projection-conv}
Let $(P_n)_{n \geq 1}$ and $P$ be probability measures on $\big(\bD([0,1];\bD),\cD_{\bD}\big)$ such that $P_n \stackrel{w}{\to} P$. Then $P_n \circ (\pi_{t_1,\ldots,t_k}^{\bD})^{-1} \stackrel{w}{\to} P \circ (\pi_{t_1,\ldots,t_k}^{\bD})^{-1}$ for any $t_1,\ldots,t_k \in T_P$.
\end{lemma}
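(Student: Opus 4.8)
The plan is to deduce this directly from the mapping theorem for weak convergence (Theorem 2.7 of \cite{billingsley99}), applied to the map
$$h:=\pi_{t_1,\ldots,t_k}^{\bD}:\bD([0,1];\bD)\to \bD^k,$$
where $\bD^k$ is equipped with the product of the $J_1$-topologies. We already know that $h$ is $\cD_{\bD}/\cD^k$-measurable, since each coordinate $\pi_{t_i}^{\bD}$ is $\cD_{\bD}/\cD$-measurable and $\cD^k$ is the product $\sigma$-field. So the only substantial point is to show that the set $D_h$ of points of discontinuity of $h$ is $P$-null.

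First I would locate $D_h$. Suppose $d_{\bD}(x_n,x)\to 0$ and $x$ is continuous (with respect to $J_1$) at each of the points among $t_1,\ldots,t_k$ lying in $(0,1)$. Then by Lemma \ref{properties-dD}.a) we have $x_n(t_i)\stackrel{J_1}{\to}x(t_i)$ for each such $i$, while for $t_i\in\{0,1\}$ the same convergence holds because $\pi_0^{\bD}$ and $\pi_1^{\bD}$ are $d_{\bD}$-continuous everywhere. Since convergence in $\bD^k$ is coordinatewise, this gives $h(x_n)\to h(x)$ in $\bD^k$. Hence
$$D_h\subseteq \bigcup_{i=1}^k \{x\in\bD([0,1];\bD):t_i\in{\rm Disc}(x)\}=\bigcup_{i=1}^k J_{t_i},$$
with the understanding that $J_0=J_1=\emptyset$ (consistent with $0,1\in T_P$).

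Now, because each $t_i\in T_P$, we have $P(J_{t_i})=0$, so $P(D_h)\leq \sum_{i=1}^k P(J_{t_i})=0$ (the set $D_h$ is Borel, being the discontinuity set of a map between metric spaces). The mapping theorem then yields $P_n\circ h^{-1}\stackrel{w}{\to}P\circ h^{-1}$, which is exactly the assertion of the lemma.

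I do not expect a genuine obstacle here: the argument is the standard continuous-mapping reduction, and its only non-formal ingredient — the inclusion $D_h\subseteq \bigcup_{i=1}^k J_{t_i}$ — is an immediate consequence of the continuity properties of the one-dimensional projections $\pi_t^{\bD}$ established earlier in Section 2.
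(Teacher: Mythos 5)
Your proof is correct and follows exactly the route the paper takes: the paper simply states that the lemma ``follows by the continuous mapping theorem,'' and your argument supplies the standard details (measurability of $\pi_{t_1,\ldots,t_k}^{\bD}$, the inclusion of its discontinuity set in $\bigcup_i J_{t_i}$ via the continuity properties of the one-dimensional projections, and $P(J_{t_i})=0$ for $t_i\in T_P$). Nothing further is needed.
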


We recall the following definitions.

\begin{definition}
{\rm A family $\Pi$ of probability measures on $\big(\bD([0,1];\bD),\cD_{\bD}\big)$ is {\em tight} if for every $\eta>0$, there exists a $d_{\bD}$-compact set $K$ in $\bD([0,1];\bD)$ such that $P(K)\geq 1-\eta$ for all $P \in \Pi$.

}
\end{definition}

\begin{definition}
{\rm A family $\Pi$ of probability measures on $\big(\bD([0,1];\bD),\cD_{\bD}\big)$ is {\em relatively compact} if for every sequence $(P_n)_{n\geq 1}$ in $\Pi$, there exists a subsequence $(P_{n_k})_{k\geq 1}$ which converges weakly to a probability measure $Q$ (which is not necessarily an element of $\Pi$).
}
\end{definition}

The following result follows by Prohorov's theorem, since
$\bD([0,1];\bD)$ is separable and complete (see Theorems 5.1 and 5.2 of \cite{billingsley99}).

\begin{theorem}
\label{prohorov-th}
A family $\Pi$ of probability measures on $\big(\bD([0,1];\bD),\cD_{\bD}\big)$ is tight if and only if it is relatively compact.
\end{theorem}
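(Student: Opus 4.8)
The plan is to deduce this directly from Prohorov's theorem. The only nontrivial ingredient is Theorem~\ref{bD-CSMS}, which guarantees that $\bD([0,1];\bD)$, equipped with the equivalent metrics $d_{\bD}$ and $d_{\bD}^0$, is a separable complete metric space; in other words, it is a Polish space. Since separability also ensures that the topology of weak convergence on the set of probability measures on $\big(\bD([0,1];\bD),\cD_{\bD}\big)$ is metrizable by the Prohorov distance, both notions appearing in the statement (tightness and relative compactness) are meaningful and the abstract theory applies verbatim.

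For the implication ``tight $\Rightarrow$ relatively compact'', I would invoke the direct half of Prohorov's theorem (Theorem 5.1 of \cite{billingsley99}), which is valid in an arbitrary metric space and does not use completeness. Given a sequence $(P_n)_{n\geq 1}$ in the tight family $\Pi$, one chooses for each $k\geq 1$ a $d_{\bD}$-compact set $K_k$ with $P_n(K_k)\geq 1-1/k$ for all $n$, extracts by a diagonal procedure a subsequence along which $\int f\,dP_n$ converges for every $f$ in a fixed countable convergence-determining class of bounded $d_{\bD}$-continuous functions, and identifies the resulting functional as integration against a genuine probability measure $Q$, the uniform tightness ruling out any escape of mass.

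For the converse implication ``relatively compact $\Rightarrow$ tight'', I would invoke the converse half of Prohorov's theorem (Theorem 5.2 of \cite{billingsley99}), which is where completeness and separability are genuinely needed. Fix $\eta>0$. Using separability, cover $\bD([0,1];\bD)$ for each $k$ by countably many open balls of radius $1/k$; relative compactness of $\Pi$ forces, for each $k$, a finite sub-collection whose union $A_k$ satisfies $P(A_k)>1-\eta 2^{-k}$ for all $P\in\Pi$ (otherwise one could build a sequence in $\Pi$ with no weakly convergent subsequence). The set $K=\overline{\bigcap_{k\geq 1}A_k}$ is then totally bounded, hence compact by completeness under $d_{\bD}^0$, and $P(K)\geq 1-\eta$ for every $P\in\Pi$.

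The point is that there is no obstacle specific to $\bD([0,1];\bD)$: the whole difficulty has already been absorbed into Theorem~\ref{bD-CSMS} (Polishness of the space), and once that is in hand the conclusion is exactly Prohorov's theorem. The only thing requiring care is to phrase the two halves over the correct metric — using $d_{\bD}^0$ where completeness is invoked and the equivalence of $d_{\bD}$ and $d_{\bD}^0$ to transfer the compactness and weak-convergence statements between the two.
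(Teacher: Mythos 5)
Your proposal is correct and follows exactly the paper's route: the paper likewise disposes of this theorem by citing Prohorov's theorem (Theorems 5.1 and 5.2 of \cite{billingsley99}) together with Theorem~\ref{bD-CSMS}, which supplies separability and completeness under $d_{\bD}^0$. Your additional care in distinguishing which half needs completeness (and hence $d_{\bD}^0$ rather than $d_{\bD}$) is consistent with, though more explicit than, what the paper writes.
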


The next result is an important tool for proving weak convergence in $\bD([0,1];\bD)$. Its proof is the same as in the classical case (see Theorem 13.1 of \cite{billingsley99}). We include it for the sake of completeness.

\begin{theorem}
\label{analogue-th-13-1}
Let $(P_n)_{n \geq 1}$ and $P$ be probability measures on $\big(\bD([0,1];\bD),\cD_{\bD}\big)$ such that
\begin{equation}
\label{fin-dim-conv}
P_n \circ (\pi_{t_1,\ldots,t_k}^{\bD})^{-1} \stackrel{w}{\to} P \circ (\pi_{t_1,\ldots,t_k}^{\bD})^{-1}  \ \mbox{in} \ \bD^k,
\ \mbox{for any} \ t_1,\ldots,t_k \in T_P
\end{equation}
and $(P_n)_{n\geq 1}$ is tight.
Then $P_n \stackrel{w}{\to} P$.
\end{theorem}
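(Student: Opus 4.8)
The plan is to run the classical subsequence argument, exactly as in the proof of Theorem 13.1 of \cite{billingsley99}. Since $(P_n)_{n\geq 1}$ is tight, Theorem \ref{prohorov-th} shows it is relatively compact, so every subsequence of $(P_n)_{n\geq 1}$ contains a further subsequence converging weakly to some probability measure $Q$ on $\big(\bD([0,1];\bD),\cD_{\bD}\big)$. The substance of the proof is to show that every such subsequential limit $Q$ must equal $P$; granting this, the standard fact that a sequence in a metrizable space converges to $P$ as soon as each of its subsequences has a sub-subsequence converging to $P$ finishes the argument, the relevant metric here being the Prohorov distance which, as noted in Section \ref{section-tight}, metrizes weak convergence on $\bD([0,1];\bD)$.

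To identify $Q$ with $P$, I would first record the bookkeeping on continuity sets: the complements of $T_P$ and $T_Q$ in $[0,1]$ are both countable (as observed just before Lemma \ref{projection-conv}), so $T:=T_P\cap T_Q$ is dense in $[0,1]$, and since $0,1\in T_P$ and $0,1\in T_Q$ we have $1\in T$. Now fix $t_1,\ldots,t_k\in T$. On one hand, hypothesis \eqref{fin-dim-conv} gives $P_n\circ(\pi_{t_1,\ldots,t_k}^{\bD})^{-1}\stackrel{w}{\to}P\circ(\pi_{t_1,\ldots,t_k}^{\bD})^{-1}$ in $\bD^k$ along the whole sequence, hence along the chosen subsequence. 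On the other hand, applying Lemma \ref{projection-conv} to the subsequence converging weakly to $Q$ gives $P_n\circ(\pi_{t_1,\ldots,t_k}^{\bD})^{-1}\stackrel{w}{\to}Q\circ(\pi_{t_1,\ldots,t_k}^{\bD})^{-1}$ along that subsequence, using $t_1,\ldots,t_k\in T_Q$. Uniqueness of weak limits on $\bD^k$ then yields $P\circ(\pi_{t_1,\ldots,t_k}^{\bD})^{-1}=Q\circ(\pi_{t_1,\ldots,t_k}^{\bD})^{-1}$, i.e. $P(A)=Q(A)$ for every $A\in\cD_{f,T}^{\bD}$. Since $1\in T$ and $T$ is dense in $[0,1]$, Theorem \ref{separating-th}(b) shows that $\cD_{f,T}^{\bD}$ is a separating class, so $P=Q$.

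Combining the two parts, every weakly convergent subsequence of $(P_n)_{n\geq 1}$ has limit $P$, and together with the relative compactness supplied by Theorem \ref{prohorov-th} this gives $P_n\stackrel{w}{\to}P$. I do not expect a genuine obstacle here, since the argument is a transcription of the classical one; the only points needing care are the bookkeeping with $T_P$ and $T_Q$ (to ensure the separating class is indexed by a dense set containing $1$, so that Theorem \ref{separating-th} applies) and the routine appeal to metrizability of weak convergence to pass from subsequential limits to convergence of the full sequence.
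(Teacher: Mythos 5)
Your proposal is correct and follows essentially the same route as the paper's proof: relative compactness via Theorem \ref{prohorov-th}, identification of every subsequential limit $Q$ with $P$ through the finite-dimensional marginals indexed by $T_P\cap T_Q$ and the separating-class Theorem \ref{separating-th}, and the standard subsequence principle to conclude. The only cosmetic difference is that you justify the subsequence principle via metrizability of weak convergence, whereas the paper cites it as a known fact.
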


\noindent {\bf Proof:} It is enough to prove that for any subsequence $(n_k)_{k \geq 1}$, there exists a further sub-subsequence $(k_l)_{l\geq 1}$ such that $P_{n_{k_l}} \stackrel{w}{\to}P$ as $l \to \infty$ (see e.g. Appendix 5.1.2 of \cite{kallenberg83}).

Let $(n_k)_{k\geq 1}$ be an arbitrary subsequence. By Theorem \ref{prohorov-th}, $(P_n)_{n\geq 1}$ is relatively compact. Hence, there exists a sub-subsequence $(k_l)_{l \geq 1}$ such that $P_{n_{k_l}} \stackrel{w}{\to}Q$ as $l\to \infty$, for some probability measure $Q$ on $\big(\bD([0,1];\bD),\cD_{\bD}\big)$. By hypothesis,
$P_{n_{k_l}} \circ (\pi_{t_1,\ldots,t_k}^{\bD})^{-1} \stackrel{w}{\to} P \circ (\pi_{t_1,\ldots,t_k}^{\bD})^{-1}$ as $l\to \infty$, for any $t_1,\ldots,t_k \in T_P$.
By Lemma \ref{projection-conv}, $P_{n_{k_l}} \circ (\pi_{t_1,\ldots,t_k}^{\bD})^{-1} \stackrel{w}{\to} Q \circ (\pi_{t_1,\ldots,t_k}^{\bD})^{-1}$ as $l\to \infty$, for any $t_1,\ldots,t_k \in T_Q$. Uniqueness of the limit implies that:
$$P \circ (\pi_{t_1,\ldots,t_k}^{\bD})^{-1}=Q \circ (\pi_{t_1,\ldots,t_k}^{\bD})^{-1} \quad \mbox{for all} \
t_1,\ldots,t_k \in T_P \cap T_Q.$$
The set $T=T_{P}\cap T_{Q}$ contains $0$ and $1$, and is dense in $[0,1]$ (since its complement in $[0,1]$ is countable). By Theorem \ref{separating-th}, $\cD_{f,T}$ is a separating class of $\cD_{\bD}$, and hence $P=Q$. $\Box$

\vspace{3mm}

We continue now with a discussion about tightness. The next result gives a criterion for tightness, being the analogue of Theorem 13.2 of \cite{billingsley99} for the space $\bD([0,1];\bD)$. This result has been used in the recent article \cite{balan-saidani18} for the construction of the $\bD$-valued $\alpha$-stable L\'evy motion with $\alpha>1$ (see the proof of Theorem 3.14 of \cite{balan-saidani18}).
Conditions {\em (i)} and {\em (iii)} of this result are similar to (13.4) and (13.5) of \cite{billingsley99}, but {\em (ii)} is a new condition, due to the space variable $s$ of an element in $\bD([0,1];\bD)$.
Recall that $w'\big(x(t),\delta\big)$ is given by \eqref{def-w-prime}, whereas $w_{\bD}'(x,\delta)$ is given by \eqref{def-wD-prime}, for any $x \in \bD([0,1];\bD)$ and $t \in [0,1]$.

\begin{theorem}
\label{analogue-th-13-2}
A sequence $(P_n)_{n\geq 1}$ of probability measures on $\big(\bD([0,1];\bD),\cD_{\bD}\big)$ is tight if and only if it satisfies the following three conditions:\\
(i) We have:
\begin{equation}
\label{analogue-13-4}
\lim_{a \to \infty} \limsup_{n\to \infty} P_n\big(\{x;\, \|x\|_{\bD} \geq a\}\big)=0.
\end{equation}
(ii) For any $\e>0$,
\begin{equation}
\label{new-cond-tight}
\lim_{\delta \to 0}\limsup_{n\to \infty}P_n\big(\{x;\,w'\big(x(t),\delta\big) \geq \e \ \mbox{for some} \ t \in [0,1]\}\big)=0.
\end{equation}
(iii) For any $\e>0$,
\begin{equation}
\label{analogue-13-5}
\lim_{\delta \to 0}\limsup_{n\to \infty}P_n\big(\{x;\,w_{\bD}'(x,\delta) \geq \e \}\big)=0.
\end{equation}
\end{theorem}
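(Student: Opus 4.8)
The plan is to mimic closely the proof of Theorem 13.2 of \cite{billingsley99}, using Prohorov's theorem (Theorem \ref{prohorov-th}) to reduce tightness to the existence of compact sets of large measure, and then matching these compact sets with the characterization of relative compactness in Theorem \ref{analogue-th-12-3}. The three conditions (i)--(iii) are, respectively, the ``stochastic'' versions of conditions (i), (ii), (iii) in Theorem \ref{analogue-th-12-3}, so the correspondence is quite direct once the measurability issues (already settled in Section 2) are in hand.

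\medskip
\noindent\textbf{Necessity.} Suppose $(P_n)_{n\geq 1}$ is tight. Given $\eta>0$, pick a $d_{\bD}$-compact set $K$ with $P_n(K)\geq 1-\eta$ for all $n$. By Theorem \ref{analogue-th-12-3}, $K$ satisfies (i)--(iii) of that theorem. Then: for (i) here, choose $a$ large enough that $\sup_{x\in K}\|x\|_{\bD}<a$; since $\{x;\|x\|_{\bD}\geq a\}\subset K^c$, we get $P_n(\|x\|_{\bD}\geq a)\leq \eta$ for all $n$, and letting first $n\to\infty$ and then $a\to\infty$ with $\eta$ fixed, and finally $\eta\to 0$, gives \eqref{analogue-13-4}. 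For (ii) here, given $\e>0$, by condition (ii) of Theorem \ref{analogue-th-12-3} choose $\delta_0$ so that $\sup_{x\in K}\sup_t w'(x(t),\delta)<\e$ for $\delta<\delta_0$; then $\{x; w'(x(t),\delta)\geq\e \text{ for some }t\}\subset K^c$, so the $P_n$-probability is $\leq\eta$, and we conclude \eqref{new-cond-tight} as before. Condition (iii) here follows identically from condition (iii) of Theorem \ref{analogue-th-12-3} using upper semi-continuity / the bound $\sup_{x\in K}w'_{\bD}(x,\delta)<\e$.

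\medskip
\noindent\textbf{Sufficiency.} Suppose (i)--(iii) hold. Fix $\eta>0$. By (i), choose $a$ and $N_0$ so that $P_n(\|x\|_{\bD}\geq a)\leq\eta$ for $n\geq N_0$; after possibly enlarging $a$, this holds for all $n$ (only finitely many extra measures to accommodate, using tightness of a single measure on a Polish space). By (ii), for each $k\geq 1$ choose $\delta_k>0$ and then enlarge/shrink so that $P_n\big(\{x; w'(x(t),\delta_k)\geq 1/k \text{ for some }t\}\big)\leq \eta/2^{k+1}$ for all $n$; similarly by (iii) choose $\delta_k'$ with $P_n\big(\{x; w'_{\bD}(x,\delta_k')\geq 1/k\}\big)\leq\eta/2^{k+1}$ for all $n$. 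Let
$$
K=\overline{\{x;\|x\|_{\bD}\leq a\}\ \cap\ \bigcap_{k\geq 1}\{x; w'(x(t),\delta_k)<1/k \ \forall t\}\ \cap\ \bigcap_{k\geq 1}\{x; w'_{\bD}(x,\delta_k')\leq 1/k\}}.
$$
By Lemma \ref{dD-cont} and Lemma \ref{upper-semicont}, the closure does not destroy the defining bounds (the norm is $d_{\bD}$-continuous, $w'_{\bD}(\cdot,\delta)$ is upper semi-continuous, and $\sup_t w'(x(t),\delta)$ is upper semi-continuous by the same argument applied pointwise in $t$ together with Theorem \ref{analogue-th-12-3}), so $K$ still satisfies conditions (i)--(iii) of Theorem \ref{analogue-th-12-3} and is therefore $d_{\bD}$-compact. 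Finally $P_n(K^c)\leq \eta + \sum_{k}\eta/2^{k+1}+\sum_k \eta/2^{k+1}=2\eta$ for all $n$, so $(P_n)$ is tight.

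\medskip
\noindent\textbf{Main obstacle.} The delicate point is the treatment of the closure in the definition of $K$: one must be sure that each of the three defining conditions is preserved under $d_{\bD}$-closure, which requires upper semi-continuity of $x\mapsto \sup_{t\in[0,1]}w'(x(t),\delta)$ on $(\bD([0,1];\bD),d_{\bD})$. This is the analogue of the classical statement that the modulus $w'$ is upper semi-continuous on $\bD$, but here it involves a supremum over $t$ of moduli of the $\bD$-valued slices, and one must verify that if $d_{\bD}(x_n,x)\to 0$ then $\limsup_n \sup_t w'(x_n(t),\delta)\leq \sup_t w'(x(t),\delta)$; the argument is the same as for Lemma \ref{upper-semicont}, transferring the $\lambda_n$ reparametrizations from \eqref{conv-dD} and using upper semi-continuity of $w'$ on $\bD$ uniformly, but it must be spelled out. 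A secondary (purely bookkeeping) point is handling the finitely many initial measures $P_1,\dots,P_{N_0-1}$ when upgrading the $\limsup_n$ bounds in (i)--(iii) to bounds valid for every $n$, which uses that each individual $P_n$ is tight on the Polish space $\bD([0,1];\bD)$.
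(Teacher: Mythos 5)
Your proof follows the paper's argument almost line for line: necessity via Prohorov's theorem and the relative-compactness characterization of Theorem \ref{analogue-th-12-3}, and sufficiency by upgrading the $\limsup_n$ bounds to all $n$ (using tightness of each individual $P_i$ on the Polish space $\bD([0,1];\bD)$) and then intersecting the countably many good sets $B$, $B_k$, $C_k$. The one place where you diverge is the point you flag as the ``main obstacle'': you define $K$ as the closure of the intersection $A$ and then try to argue that the three defining bounds survive the passage to the closure, which leads you to require upper semi-continuity of $x\mapsto \sup_{t\in[0,1]}w'(x(t),\delta)$ --- a statement you do not prove. This detour is unnecessary. Theorem \ref{analogue-th-12-3} is a criterion for \emph{relative} compactness, so it suffices to verify conditions (i)--(iii) of that theorem for the set $A$ itself (where they hold by construction: $\sup_{x\in A}\|x\|_{\bD}<a$, $\sup_{x\in A}\sup_t w'(x(t),\delta_k)\leq 1/k$, $\sup_{x\in A}w'_{\bD}(x,\delta'_k)\leq 1/k$); relative compactness of $A$ then means precisely that $K=\overline{A}$ is compact, with no preservation-under-closure argument needed. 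This is exactly how the paper proceeds, and with that correction your proof is complete and correct.
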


\noindent{\bf Proof:} We use a similar argument as in the proof of Theorem 13.2 of \cite{billingsley99} (see also the proof of Theorem 7.3 of \cite{billingsley99}). Suppose that $(P_n)_{n\geq 1}$ is tight.
Let $\eta>0$ and $\e>0$ be arbitrary. We have to prove that there exist $a>0$, $\delta \in (0,1)$ and an integer $n_0 \geq 1$ such that for all $n \geq n_0$,
\begin{equation}
\label{tight-abc}
\left\{
\begin{array}{ll}
(a) & P_n\big(\{x;\, \|x\|_{\bD} \geq a\}\big) \leq \eta  \\
(b) & P_n\big(\{x;\,w'\big(x(t),\delta\big) \geq \e \ \mbox{for some} \ t \in [0,1]\}\big) \leq \eta \\
(c) & P_n\big(\{x;\,w_{\bD}'(x,\delta) \geq \e \big) \leq \eta.
\end{array}
\right.
\end{equation}
We will show that {\em (a)}-{\em (c)} hold with $n_0=1$. By Theorem \ref{prohorov-th}, $(P_n)_{n\geq 1}$ is relatively compact. Hence, there exists a compact set $K$ in $\bD([0,1];\bD)$ such that $P_n(K)\geq 1-\eta$ for all $n \geq 1$. The set $K$ is characterized using Theorem \ref{analogue-th-12-3}. More precisely, we know that:
\begin{equation}
\label{compact-cond}
\left\{
\begin{array}{ll}
(a') & \sup_{x \in K}\|x\|_{\bD}<\infty  \\
(b') & \lim_{\delta \to 0}\sup_{x \in K}\sup_{t \in [0,1]}w'\big(x(t),\delta\big)=0\\
(c') & \lim_{\delta \to 0}\sup_{x\in K} w_{\bD}'(x,\delta)=0
\end{array}
\right.
\end{equation}
Due to $(a')$, we can choose $a > \sup_{x \in K}\|x\|_{\bD}$ arbitrary. Then $K \subset \{x; \|x\|_{\bD} < a\}$ and so, $$P_n \big(\{x;\, \|x\|_{\bD} \geq a\}\big) \leq P_n(K^c)\leq \eta \quad \mbox{for all} \ n \geq 1.$$
By $(b')$, there exists $\delta \in (0,1)$ such that $w'(x(t),\delta)<\e$ for all $x \in K,t \in [0,1]$. Hence, $K \subset \{x;\,w'(x(t),\delta)<\e  \ \mbox{for all} \ t \in [0,1]\}$, and so
$$P_n\big( \{x;\,w'\big(x(t),\delta\big)<\e  \ \mbox{for some} \ t \in [0,1]\}\big) \leq P_n(K^c) \leq \eta \quad \mbox{for all} \ n \geq 1.$$
By $(c')$, there exists $\delta \in (0,1)$ such that $w_{\bD}'(x,\delta)<\e$ for all $x \in K$. Hence, $K \subset \{x;\,w_{\bD}'(x,\delta)<\e\}$, and so
$$P_n\big( \{x;\,w'\big(x,\delta\big)<\e  \}\big) \leq P_n(K^c) \leq \eta \quad \mbox{for all} \ n \geq 1.$$

Suppose next that conditions {\em (i)}-{\em (iii)} hold. Let $\eta>0$ and $\e>0$ be arbitrary. Then there exist $a'>0$, $\delta' \in (0,1)$ and an integer $n_0\geq 1$ such that \eqref{tight-abc} holds for all $n\geq n_0$
(with $a'$ and $\delta'$ replacing $a$ and $\delta$).
We first prove that \eqref{tight-abc} actually holds for all $n \geq 1$, for some values $a$ and $\delta$ which will be given below. Fix $i \in \{1,\ldots,n_0-1\}$. Since $\bD([0,1];\bD)$ is separable and complete, the single probability measure $P_i$ is tight, and therefore it satisfies conditions {\em (i)}-{\em (iii)}. Hence, there exists $a_i>0$ and $\delta_i \in (0,1)$ such that
$$\left\{
\begin{array}{ll}
& P_i\big(\{x;\, \|x\|_{\bD} \geq a_i\}\big) \leq \eta  \\
& P_i\big(\{x;\,w'\big(x(t),\delta_i\big) \geq \e \ \mbox{for some} \ t \in [0,1]\}\big) \leq \eta \\
& P_i\big(\{x;\,w_{\bD}'(x,\delta_i) \geq \e \big) \leq \eta.
\end{array}
\right.$$
Then \eqref{tight-abc} holds for all $n\geq 1$, with $a=\max\{a',\max_{i \leq n_0-1}a_i\}$ and $\delta=\min\{\delta',\min_{i\leq n_0-1}\delta_i\}$.

Let $B=\{x;\|x\|_{\bD} < a\}$. Then $P_n(B) \geq 1-\eta$ for all $n\geq 1$. By parts {\em (b)} and {\em (c)} of \eqref{tight-abc} with $\e=1/k$ and $\eta$ replaced by $\eta/2^k$, there exists $\delta_k \in (0,1)$ such that for all $n\geq 1$,
$$P_n(B_k) \geq 1-\frac{\eta}{2^k} \quad \mbox{and} \quad P_n(C_k) \geq 1-\frac{\eta}{2^k},$$ where $B_k=\{x;  \sup_{t \in [0,1]}w'(x(t),\delta_k)<1/k \}$ and $C_k=\{x;w_{\bD}'(x,\delta_k)<1/k\}$. Let $A=B \cap \big(\cap_{k\geq 1}B_k\big) \cap \big(\cap_{k\geq 1} C_k\big)$ and $K=\overline{A}$. For any $n\geq 1$, $P_n(K) \geq P_n(A) \geq 1-3\eta$, since
$$P_n(A^c) \leq   P_n(B^c) +\sum_{k\geq 1}P_n(B_k^c)+\sum_{k\geq 1}P_n(C_k^c) \leq \eta+\sum_{k\geq 1}\frac{\eta}{2^k}+\sum_{k\geq 1}\frac{\eta}{2^k}=3\eta.$$
We show that $K$ is compact in $\bD([0,1];\bD)$. By Theorem \ref{analogue-th-12-3}, this is equivalent to showing that $K$ satisfies \eqref{compact-cond}. Since $\|x\|_{\bD}< a$ for any $x \in B$ and $A \subset B$, we have $\sup_{x \in A}\|x\|_{\bD} <a$. This shows that $(a')$ holds. Note that for any $k\geq 1$, $\sup_{x\in A}\sup_{t \in [0,1]}w'(x(t),\delta_k)<1/k$ (since $A \subset B_k$), and so $(b')$ holds. Finally, for any $k\geq 1$, $\sup_{x \in A}w_{\bD}'(x,\delta_k)<1/k$ (since $A \subset C_k$), and hence $(c')$ holds. This proves that $(P_n)_{n\geq 1}$ is tight. $\Box$

\vspace{3mm}

The following result gives a replacement for condition {\em (i)} in Theorem \ref{analogue-th-13-2}. This condition is the analogue of (13.6) of \cite{billingsley99}.

\begin{corollary}
\label{analogue-corol-p140}
Condition (i) of Theorem \ref{analogue-th-13-2} can be replaced by the following condition:\\
(i') for each $t$ in a dense subset $T$ of $[0,1]$ which contains $1$, we have:
\begin{equation}
\label{analogue-13-6}
\lim_{a \to \infty} \limsup_{n\to \infty} P_n\big(\{x;\|x(t)\|\geq a\}\big)=0.
\end{equation}
\end{corollary}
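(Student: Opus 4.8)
The plan is to prove that, assuming conditions (ii) and (iii) of Theorem \ref{analogue-th-13-2} hold, condition (i) is equivalent to condition (i'); since by that theorem tightness of $(P_n)_{n\geq 1}$ is the same as (i)+(ii)+(iii), this is precisely the statement that (i) may be replaced by (i'). The implication (i) $\Rightarrow$ (i') is trivial (and uses neither (ii) nor (iii)): as $\|x(t)\|\leq \|x\|_{\bD}$ for every $t\in[0,1]$, the inclusion $\{x;\|x(t)\|\geq a\}\subseteq\{x;\|x\|_{\bD}\geq a\}$ shows that \eqref{analogue-13-4} implies \eqref{analogue-13-6} for every $t\in[0,1]$. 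The substance of the corollary is therefore the reverse implication, for which I would use only condition (iii).

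The key point is a deterministic estimate: on the event $\{w_{\bD}'(x,\delta)<1\}$, the quantity $\|x\|_{\bD}$ is controlled by the values $\|x(u)\|$ at finitely many fixed points $u$ taken from $T$. To set this up, fix $\eta>0$; by (iii) applied with $\e=1$, choose $\delta\in(0,1)$ and $n_1\geq 1$ with $P_n\big(\{x;w_{\bD}'(x,\delta)\geq 1\}\big)\leq \eta$ for all $n\geq n_1$. Using that $T$ is dense in $[0,1]$ and $1\in T$, fix once and for all a finite set $U=\{u_1<\ldots<u_N\}\subset T$ with $u_1<\delta$, $u_N=1$, and $u_{j+1}-u_j<\delta$ for $j=1,\ldots,N-1$. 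The role of this net is the elementary geometric fact that any half-open interval $[t_{i-1},t_i)$ arising from a $\delta$-sparse set $\{t_i\}_{0\leq i\leq v}$ has length exceeding $\delta$ and hence contains at least one point of $U$; this is a two-line verification, splitting according to whether $t_{i-1}<u_1$ or $t_{i-1}\in[u_j,u_{j+1})$ for some $j$. Granting this, suppose $w_{\bD}'(x,\delta)<1$ and pick a $\delta$-sparse set $\{t_i\}$ with $w_{\bD}(x,[t_{i-1},t_i))<1$ for all $i$. For $t\in[t_{i-1},t_i)$, choosing $u_{j(i)}\in U\cap[t_{i-1},t_i)$ and using \eqref{d-equal-norm} and the triangle inequality in $(\bD,d_{J_1}^0)$,
$$\|x(t)\|=d_{J_1}^0\big(x(t),0\big)\leq d_{J_1}^0\big(x(t),x(u_{j(i)})\big)+\|x(u_{j(i)})\|\leq w_{\bD}\big(x,[t_{i-1},t_i)\big)+\max_{u\in U}\|x(u)\|<1+\max_{u\in U}\|x(u)\|;$$
since $[0,1)=\bigcup_{i=1}^v[t_{i-1},t_i)$ and $1=u_N\in U$, this gives $\|x\|_{\bD}\leq \max_{u\in U}\|x(u)\|+1$. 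Hence, for every $a>1$,
$$\{x;\|x\|_{\bD}\geq a\}\subseteq \{x;w_{\bD}'(x,\delta)\geq 1\}\cup\bigcup_{u\in U}\{x;\|x(u)\|\geq a-1\},$$
and all of these sets lie in $\cD_{\bD}$ (the first because $w_{\bD}'(\cdot,\delta)$ is $\cD_{\bD}$-measurable, the others because $\pi_u^{\bD}$ is measurable and $\|\cdot\|=d_{J_1}^0(\cdot,0)$ is $d_{J_1}^0$-continuous on $\bD$).

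To conclude, for $n\geq n_1$ I would use the above inclusion to write $P_n(\|x\|_{\bD}\geq a)\leq \eta+\sum_{u\in U}P_n(\|x(u)\|\geq a-1)$, take $\limsup_{n\to\infty}$ and then $\limsup_{a\to\infty}$; since $U$ is finite and, for each $u\in U$, $\limsup_{n\to\infty}P_n(\|x(u)\|\geq a-1)\to 0$ as $a\to\infty$ by \eqref{analogue-13-6}, this yields $\limsup_{a\to\infty}\limsup_{n\to\infty}P_n(\|x\|_{\bD}\geq a)\leq \eta$, and letting $\eta\downarrow 0$ gives \eqref{analogue-13-4}. I expect the only genuine obstacle to be the construction of the fixed net $U$ and the verification that it meets every block of an arbitrary $\delta$-sparse partition; once that combinatorial fact is secured, the rest is the triangle inequality for $d_{J_1}^0$ together with a routine interchange of limits over a finite sum, exactly as in the passage from (13.4) to (13.6) in \cite{billingsley99}.
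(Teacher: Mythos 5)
Your proposal is correct and follows essentially the same route as the paper's proof: use condition (iii) with $\e=1$ to control $w_{\bD}'(\cdot,\delta)$, pick a finite mesh of points from $T$ with spacing less than $\delta$ so that every block of a $\delta$-sparse partition contains a mesh point, and then bound $\|x\|_{\bD}$ via the triangle inequality for $d_{J_1}^0$ and the identity $\|x(t)\|=d_{J_1}^0(x(t),0)$, finishing with a union bound over the finite mesh. The only (cosmetic) difference is that you work on the event $\{w_{\bD}'(x,\delta)<1\}$ and state a set inclusion, whereas the paper records the deterministic inequality $\|x\|_{\bD}\leq w_{\bD}'(x,\delta)+1+m(x)$; your choice of $u_1<\delta$ rather than $u_1=0$ also neatly avoids assuming $0\in T$.
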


\noindent {\bf Proof:} Suppose that condition $(i)$ of Theorem \ref{analogue-th-13-2} holds.
Then $(i')$ clearly holds, since $\{x;\|x(t)\|\geq a\} \subset \{x;\|x\|_{\bD}\geq a\}$ for any $t \in T$.

Suppose next that conditions $(i')$ and $(iii)$ hold. We prove that $(i)$ holds, using a similar argument as in the Corollary on page 140 of \cite{billingsley99}. Let $\eta>0$ be arbitrary. By condition $(iii)$, there exist $\delta \in (0,1)$ and an integer $n_1 \geq 1$ such that
\begin{equation}
\label{Pn-wD}
P_n\big(\{x; w_{\bD}'(x,\delta)\geq 1\}\big) \leq \eta \quad \mbox{for all} \ n \geq n_1.
\end{equation}
Let $\{t_i\}_{i=1,\ldots,v}$ be a $\delta$-sparse set with $0=t_0<t_1<\ldots<t_v=1$ such that $w_{\bD}(x,[t_{i-1},t_i)) \leq w_{\bD}'(x,\delta)+1$ for all $i=1,\ldots,v$. Choose points $0=s_0<s_1<\ldots <s_k=1$ such that $s_j \in T$ and $s_j-s_{j-1}<\delta$ for all $k=1,\ldots,k$. Let $m(x)=\max_{1 \leq j\leq k}\|x(s_j)\|$. By \eqref{analogue-13-6},
$\lim_{a \to \infty} \limsup_{n\to \infty} P_n\big(\{x;m(x)\geq a\}\big)=0$.
So, there exist $a>0$ and $n_2 \geq 1$ such that
\begin{equation}
\label{Pn-mx}
P_n\big(\{x; m(x)\geq a\}\big)\leq \eta \quad \mbox{for all $n\geq n_2$}.
\end{equation}
We claim that for any $x \in \bD([0,1];\bD)$,
\begin{equation}
\label{norm-x-m-ineq}
\|x\|_{\bD} \leq w_{\bD}'(x,\delta)+1+m(x).
\end{equation}
To see this, note that since $\{t_i\}_{i}$ is $\delta$-sparse, each interval $[t_{i-1},t_i)$ contains at least one point $s_{j}$, that we call $s_{j_i}$. For any $i=1,\ldots,v$ and for any $t \in [t_{i-1},t_i)$,
$$\|x(t)\|=d_{J_1}^0\big(x(t),0\big) \leq d_{J_1}^0\big(x(t),x(s_{j_i})\big)+ d_{J_1}^0\big(x(s_{j_i}),0\big)=
 d_{J_1}^0\big(x(t),x(s_{j_i})\big)+ \|x(s_{j_i})\|.$$
Hence,
$$\sup_{t \in [t_{i-1},t_i)}\|x(t)\| \leq w_{\bD}(x,[t_{i-1},t_i))+\|x(s_{j_i})\| \leq w_{\bD}'(x,\delta)+1+m(x).$$
Relation \eqref{norm-x-m-ineq} follows since $\|x\|_{\bD} =\max \{ \max_{1 \leq i\leq v} \sup_{t \in [t_{i-1},t_i)} \|x(t)\|,\|x(1)\| \}$.

Let $n_0=\max(n_1,n_2)$. From \eqref{Pn-wD}, \eqref{Pn-mx} and \eqref{norm-x-m-ineq}, we infer that
$$P_n\big(\{x;\|x\|_{\bD}\geq a+2\}\big) \leq P_n\big(\{x;w_{\bD}'(x,\delta)+m(x)\geq a+1\}\big)\leq 2
\eta \quad \mbox{for all $n\geq n_0$}.$$
This concludes the proof of $(i)$. $\Box$

\vspace{3mm}

The following result is the analogue of relation (13.8) of \cite{billingsley99} (or Theorem 15.3 of \cite{billingsley68}), and it plays a crucial role in article \cite{balan-saidani18} (see Theorem 2.4 of \cite{balan-saidani18}).

\begin{theorem}
\label{analogue-13-8}
A sequence $(P_n)_{n\geq 1}$ of probability measures on $\big(\bD([0,1];\bD),\cD_{\bD}\big)$ is tight if and only if it satisfies condition (i) of Theorem \ref{analogue-th-13-2} and the following two conditions:\\
$(ii')$ For any $\e>0$,
$$
\left\{
\begin{array}{ll}
(a) & \lim_{\delta \to 0}\limsup_{n \to \infty}P_n(\{x;\,w''(x(t),\delta) \geq \e \ \mbox{for some} \ t \in [0,1]\})=0; \\
(b) & \lim_{\delta \to 0}\limsup_{n \to \infty}P_n(\{x;\,|x(t,\delta)-x(t,0)| \geq  \e \ \mbox{for some} \ t \in [0,1]\})=0; \\
(c) & \lim_{\delta \to 0}\limsup_{n \to \infty}P_n(\{x;\,|x(t,1-)-x(t,1-\delta)| \geq  \e \ \mbox{for some} \ t \in [0,1]\})=0.
\end{array}
\right.$$
$(iii')$  For any $\e>0$,
$$
\left\{
\begin{array}{ll}
(a) & \lim_{\delta \to 0}\limsup_{n \to \infty}P_n(\{x;\,w_{\bD}''(x,\delta) \geq \e \})=0; \\
(b) & \lim_{\delta \to 0}\limsup_{n \to \infty}P_n(\{x;\,d_{J_1}^{0}\big(x(\delta),x(0)\big) \geq \e \})=0; \\
(c) & \lim_{\delta \to 0}\limsup_{n \to \infty}P_n(\{x;\,d_{J_1}^{0}\big(x(1-),x(1-\delta)\big) \geq \e \})=0.
\end{array}
\right.$$
\end{theorem}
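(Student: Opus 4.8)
The plan is to deduce this theorem from Theorem~\ref{analogue-th-13-2} by showing that, under condition~{\em (i)}, the pair of conditions $(ii)$ and $(iii)$ of Theorem~\ref{analogue-th-13-2} is equivalent to the pair $(ii')$ and $(iii')$ stated here. The bridge between the two formulations is Theorem~\ref{analogue-th-12-4}, which plays exactly the role for Theorem~\ref{analogue-th-12-3} that the present statement should play for Theorem~\ref{analogue-th-13-2}. Concretely, the quantitative inequalities \eqref{analogue-12-31} and \eqref{analogue-12-32}, together with the classical inequalities (12.31)--(12.32) of \cite{billingsley99} applied to each slice $x(t)\in\bD$, are the engine: they let one pass back and forth between the $w'$-type modulus and the $w''$-type modulus at the level of a single function $x$, hence also at the level of the events whose $P_n$-probabilities must be controlled.

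First I would prove the direction ``$(ii),(iii)\Rightarrow(ii'),(iii')$'', which is the easy one. For $(iii')$, inequality~\eqref{analogue-12-31} gives, for every $x$ and every $\delta\in(0,1/2)$,
\[
w_{\bD}''(x,\delta)\vee d_{J_1}^0\big(x(\delta),x(0)\big)\vee d_{J_1}^0\big(x(1-),x(1-\delta)\big)\leq w_{\bD}'(x,2\delta),
\]
so the event in each of $(iii')(a)$--$(c)$ is contained in $\{x;\,w_{\bD}'(x,2\delta)\geq\e\}$, and \eqref{analogue-13-5} immediately yields the three required limits. For $(ii')$, I would apply the classical inequality (12.31) of \cite{billingsley99} to the function $x(t)\in\bD$ uniformly in $t\in[0,1]$: this shows that $\sup_{t}w''(x(t),\delta)$, $\sup_t|x(t,\delta)-x(t,0)|$ and $\sup_t|x(t,1-)-x(t,1-\delta)|$ are all bounded by $\sup_t w'(x(t),2\delta)$, so each event in $(ii')(a)$--$(c)$ lies inside $\{x;\,w'(x(t),2\delta)\geq\e\ \text{for some}\ t\}$, and \eqref{new-cond-tight} finishes the job.

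The converse direction ``$(i),(ii'),(iii')\Rightarrow(ii),(iii)$'' is where the work lies, and it uses the reverse inequalities. For $(iii)$, inequality~\eqref{analogue-12-32} gives
\[
w_{\bD}'(x,\delta/2)\leq 12\big\{w_{\bD}''(x,\delta)+d_{J_1}^0\big(x(\delta),x(0)\big)+d_{J_1}^0\big(x(1-),x(1-\delta)\big)\big\},
\]
so $\{x;\,w_{\bD}'(x,\delta/2)\geq\e\}$ is contained in the union of the three events $\{w_{\bD}''(x,\delta)\geq\e/36\}$, $\{d_{J_1}^0(x(\delta),x(0))\geq\e/36\}$ and $\{d_{J_1}^0(x(1-),x(1-\delta))\geq\e/36\}$; applying $\limsup_n$ and then $\lim_{\delta\to0}$ and invoking $(iii')(a)$--$(c)$ gives \eqref{analogue-13-5}. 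For $(ii)$, I would apply the classical inequality (12.32) of \cite{billingsley99} to each slice $x(t)\in\bD$, uniformly in $t$: this bounds $\sup_t w'(x(t),\delta/2)$ by $12$ times the sum of $\sup_t w''(x(t),\delta)$, $\sup_t|x(t,\delta)-x(t,0)|$ and $\sup_t|x(t,1-)-x(t,1-\delta)|$, so $\{x;\,w'(x(t),\delta/2)\geq\e\ \text{for some}\ t\}$ is covered by the three events appearing in $(ii')$, and the same double-limit argument gives \eqref{new-cond-tight}. The role of $(i)$ here is only to supply the remaining hypothesis of Theorem~\ref{analogue-th-13-2}; once $(i)$, $(ii)$ and $(iii)$ all hold, that theorem delivers tightness, and conversely tightness gives $(i)$ directly and $(ii'),(iii')$ via the first direction.

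The main obstacle I anticipate is measurability and the precise handling of the ``for some $t\in[0,1]$'' events, together with checking that the slicewise classical inequalities can indeed be taken with a supremum over $t$ pulled inside (that is, that the constant $12$ in (12.32) of \cite{billingsley99} is uniform in the function and hence survives the $\sup_t$). Both points are genuinely routine: the relevant functions were shown to be $\cD_{\bD}$-measurable in the lemmas preceding Section~\ref{section-tight} (using Lemma~\ref{cont-Phi} and the reduction to rational $t_1,t,t_2$), and the suprema over $t$ can be replaced by suprema over a countable dense set by right-continuity in the $J_1$ metric, so the events are measurable and the inequalities pass through termwise. I would therefore present the argument compactly, citing \eqref{analogue-12-31}, \eqref{analogue-12-32}, (12.31) and (12.32) of \cite{billingsley99}, and Theorem~\ref{analogue-th-13-2}, and omitting the bookkeeping details exactly as the proof of Theorem~\ref{analogue-th-12-4} does.
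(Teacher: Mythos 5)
Your proposal is correct and follows essentially the same route as the paper: the paper's proof of this theorem is precisely the reduction to Theorem \ref{analogue-th-13-2} via the equivalence of $(ii')$ with $(ii)$ (using (12.31)--(12.32) of \cite{billingsley99} applied slicewise) and of $(iii')$ with $(iii)$ (using \eqref{analogue-12-31} and \eqref{analogue-12-32}). You merely make the event inclusions and constants explicit, which the paper leaves implicit.
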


\begin{proof}
This follows directly from Theorem \ref{analogue-th-13-2}. To see this, note that $(ii')$ is equivalent to $(ii)$ of Theorem \ref{analogue-th-13-2}, due to inequalities (12.31) and (12.32) of \cite{billingsley99}), whereas $(iii')$ is equivalent to $(iii)$ of Theorem \ref{analogue-th-13-2}, due to inequalities \eqref{analogue-12-31} and \eqref{analogue-12-32}.
\end{proof}

The  following result is the analogue of Theorem 13.3 of \cite{billingsley99}.

\begin{theorem}
\label{analogue-th-13-3}
Let $(P_n)_{n\geq 1}$ and $P$ be probability measures on $\bD([0,1];\bD)$ such that \eqref{fin-dim-conv} holds,
$(P_n)_{n\geq 1}$ satisfies parts $(ii')$ and $(iii'.a)$ of Theorem \ref{analogue-13-8}, and $P$ satisfies
\begin{equation}
\label{analogue-13-9}
\lim_{\delta \to 0} P\big(\{x; \, d_{J_1}^0\big(x(1),x(1-\delta)\big)\geq \e \} \big)=0 \quad \mbox{for all} \ \e>0.
\end{equation}
Then $P_n \stackrel{w}{\to} P$.
\end{theorem}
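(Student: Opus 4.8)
The plan is to derive the conclusion from the tightness criterion: by Theorem~\ref{analogue-th-13-1}, once $(P_n)_{n\geq 1}$ is known to be tight, the finite-dimensional convergence \eqref{fin-dim-conv} forces $P_n\stackrel{w}{\to}P$. So everything reduces to proving that $(P_n)_{n\geq 1}$ is tight, which by Theorem~\ref{analogue-th-13-2} means verifying its conditions $(i)$, $(ii)$ and $(iii)$. Condition $(ii)$ comes for free: $(P_n)_{n\geq 1}$ satisfies part $(ii')$ of Theorem~\ref{analogue-13-8}, and $(ii')$ implies $(ii)$ by inequalities (12.31)--(12.32) of \cite{billingsley99}. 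Thus the work is to establish $(iii)$ and then $(i)$.

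To get $(iii)$, fix $\e>0$. Since $w_{\bD}'(x,\cdot)$ is non-decreasing, it is enough to produce a sequence $\delta_k\downarrow 0$ with $\lim_k\limsup_n P_n(\{x:w_{\bD}'(x,\delta_k)\geq\e\})=0$, and since \eqref{fin-dim-conv} is only available at times of $T_P$, I would choose $\delta_k\downarrow 0$ with $2\delta_k\in T_P$ and $1-2\delta_k\in T_P$ (possible, as the complement of $T_P$ is countable). Applying \eqref{analogue-12-32} with $\delta=2\delta_k$ bounds $w_{\bD}'(x,\delta_k)$ by a constant times $w_{\bD}''(x,2\delta_k)+d_{J_1}^0(x(2\delta_k),x(0))+d_{J_1}^0(x(1-),x(1-2\delta_k))$. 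The first summand is handled by hypothesis $(iii'.a)$ of Theorem~\ref{analogue-13-8}. For the second, \eqref{fin-dim-conv} at $\{0,2\delta_k\}$ gives $(x_n(0),x_n(2\delta_k))\stackrel{d}{\to}(x(0),x(2\delta_k))$ in $\bD^2$, so by Lemma~\ref{cont-Phi} and the continuous mapping theorem $d_{J_1}^0(x_n(2\delta_k),x_n(0))\stackrel{d}{\to}d_{J_1}^0(x(2\delta_k),x(0))$; Portmanteau then gives $\limsup_n P_n(d_{J_1}^0(x(2\delta_k),x(0))\geq\e/36)\leq P(d_{J_1}^0(x(2\delta_k),x(0))\geq\e/36)$, which tends to $0$ as $k\to\infty$ since $P$-a.e.\ $x$ is right-continuous at $0$ in $J_1$ (bounded convergence on the indicator).

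The third summand, $d_{J_1}^0(x(1-),x(1-2\delta_k))$, is the delicate one, because $x(1-)$ is a left limit and not a projection value, so \eqref{fin-dim-conv} cannot be applied to it directly. I would neutralise it with the elementary inequality
$$d_{J_1}^0\big(x(1-),x(1-\delta)\big)\leq w_{\bD}''(x,\delta)+2\,d_{J_1}^0\big(x(1),x(1-\delta)\big),\qquad x\in\bD([0,1];\bD),\ \delta\in(0,1).$$
To see this, for $1-\delta\leq t<1$ the definition \eqref{def-wD-second} of $w_{\bD}''$ applied to the triple $(1-\delta,t,1)$ gives $w_{\bD}''(x,\delta)\geq d_{J_1}^0(x(t),x(1-\delta))\wedge d_{J_1}^0(x(1),x(t))$; splitting into the two cases and using the triangle inequality in $(\bD,d_{J_1}^0)$ yields $d_{J_1}^0(x(1),x(t))\leq w_{\bD}''(x,\delta)+d_{J_1}^0(x(1),x(1-\delta))$, hence $d_{J_1}^0(x(t),x(1-\delta))\leq w_{\bD}''(x,\delta)+2\,d_{J_1}^0(x(1),x(1-\delta))$, and letting $t\uparrow 1$ (so $x(t)\stackrel{J_1}{\to}x(1-)$, with Lemma~\ref{cont-Phi} giving convergence of the distances) gives the claim. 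Applied with $\delta=2\delta_k$, the third summand is then dominated by $w_{\bD}''(x,2\delta_k)$ — again handled by $(iii'.a)$ — and by $d_{J_1}^0(x(1),x(1-2\delta_k))$, for which \eqref{fin-dim-conv} at $\{1-2\delta_k,1\}$, Lemma~\ref{cont-Phi}, Portmanteau and the hypothesis \eqref{analogue-13-9} on $P$ give $\lim_k\limsup_n P_n(d_{J_1}^0(x(1),x(1-2\delta_k))\geq\e/144)=0$. Combining the three estimates gives $(iii)$.

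Finally, for $(i)$: by \eqref{d-equal-norm} the map $y\mapsto\|y\|=d_{J_1}^0(y,0)$ is $J_1$-continuous on $\bD$, so $\{y:\|y\|\geq a\}$ is $J_1$-closed, and for each $t\in T_P$, \eqref{fin-dim-conv} and Portmanteau give $\limsup_n P_n(\|x(t)\|\geq a)\leq P(\|x(t)\|\geq a)\to 0$ as $a\to\infty$; since $1\in T_P$ and $T_P$ is dense, this is condition $(i')$ of Corollary~\ref{analogue-corol-p140}, which together with the already-proved $(iii)$ yields $(i)$. Now $(i)$, $(ii)$, $(iii)$ hold, so Theorem~\ref{analogue-th-13-2} gives tightness of $(P_n)_{n\geq 1}$ and Theorem~\ref{analogue-th-13-1} gives $P_n\stackrel{w}{\to}P$. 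I expect the main obstacle to be exactly the right-endpoint term: the left limit $x(1-)$ is invisible to finite-dimensional distributions, and the whole point is that it can be traded for $w_{\bD}''$ (controlled by hypothesis) plus the two-projection quantity $d_{J_1}^0(x(1),x(1-\delta))$, which is where the assumption \eqref{analogue-13-9} on the limit $P$ (no fixed $J_1$-discontinuity at time $1$) enters; a lesser nuisance is that \eqref{fin-dim-conv} lives only on $T_P$, forcing the passage through a sequence $\delta_k$ and the monotonicity of $w_{\bD}'(x,\cdot)$.
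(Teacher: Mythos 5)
Your proposal is correct, and its skeleton coincides with the paper's: reduce to tightness via Theorem~\ref{analogue-th-13-1}, then verify the tightness criterion, using \eqref{fin-dim-conv} together with Lemma~\ref{cont-Phi} and the Portmanteau theorem to control the projection-based quantities, restricting the relevant $\delta$'s to $T_P$. Two points differ in execution. First, for condition $(i')$ the paper argues through Prohorov's theorem (each marginal sequence $P_n\circ(\pi_t^{\bD})^{-1}$ is weakly convergent, hence tight, hence supported up to $\eta$ on a $J_1$-compact set $K$ with $\sup_{y\in K}\|y\|<\infty$), whereas you use the $J_1$-closedness of $\{y:\|y\|\geq a\}$ (via \eqref{d-equal-norm}) and Portmanteau directly; both are fine, yours is marginally more economical. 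Second, and more substantively, for the right-endpoint term the paper only records that $P\big(d_{J_1}^0(x(1),x(1-))>0\big)=0$ and then says ``the rest of the argument is the same as for part (b)''; taken literally this is incomplete, because under $P_n$ the quantity $d_{J_1}^0\big(x(1-),x(1-\delta)\big)$ involves the left limit $x(1-)$, which is not a projection and hence not reachable by \eqref{fin-dim-conv}. Your inequality $d_{J_1}^0\big(x(1-),x(1-\delta)\big)\leq w_{\bD}''(x,\delta)+2\,d_{J_1}^0\big(x(1),x(1-\delta)\big)$, proved by applying \eqref{def-wD-second} to the triple $(1-\delta,t,1)$ and letting $t\uparrow 1$ with Lemma~\ref{cont-Phi}, is exactly the missing bridge: it trades the left limit for $w_{\bD}''$ (controlled by hypothesis $(iii'.a)$) plus the two-projection quantity handled by \eqref{fin-dim-conv} and \eqref{analogue-13-9}. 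So your writeup is not only correct but supplies a step the paper leaves implicit; the rest (deducing $(iii)$ from \eqref{analogue-12-32} along a sequence $\delta_k\downarrow 0$ with $2\delta_k,1-2\delta_k\in T_P$, and invoking the monotonicity of $w_{\bD}'(x,\cdot)$) matches the paper's route of verifying $(iii')$ and using its equivalence with $(iii)$.
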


\noindent {\bf Proof:} By Theorem 13.1, it is enough to prove that $(P_n)_{n\geq 1}$ is tight. For this, we use Theorem \ref{analogue-13-8}. We first check condition $(i')$ given by Corollary \ref{analogue-corol-p140}, with $T=T_P$. Let $t \in T_P$ be arbitrary. The sequence $\{P_n \circ (\pi_t^{\bD})^{-1}\}_{n\geq 1}$ is relatively compact in $\bD$ being weakly convergent. By Prohorov theorem, this sequence is tight. Hence, for any $\eta>0$, there exists a compact set $K$ in $\bD$ such that $[P_n \circ (\pi_t^{\bD})^{-1}](K^c) \leq \eta$ for all $n \geq 1$. By Theorem 12.3 of \cite{billingsley99}, $M:=\sup_{y \in K}\|y\|<\infty$. For any $a>M$,  $\{y\in \bD; \|y\|\geq a\} \subset K^c$ and
$$P_n\big(\{x; \|x(t)\|\geq a \}\big)
\leq [P_n \circ (\pi_t^{\bD})^{-1}](K^c) \leq \eta \quad \mbox{for all} \ n\geq 1.$$

Next, we check that part (b) of $(iii')$ holds. Let $\e>0$ and $\eta>0$ be arbitrary. By the right continuity of elements in $\bD([0,1];\bD)$,
$P\big(\{x; d_{J_1}^0\big(x(\delta),x(0) \big)\geq \e \}\big) \to 0$ as $\delta \to 0$. Choose $\delta \in T_P$ small such that $P\big(\{x; d_{J_1}^0\big(x(\delta),x(0) \big) \}\big)<\eta$. By \eqref{fin-dim-conv}, $P_n \circ (\pi_{0,\delta}^{\bD})^{-1} \stackrel{w}{\to} P \circ (\pi_{0,\delta}^{\bD})^{-1}$ in $\bD^2$.
By Lemma \ref{cont-Phi}, the set $A=\{(y_1,y_2) \in \bD^2; d_{J_1}^0(y_1,y_2) \geq \e\}$ is closed in $\bD^2$ with respect to the product of $J_1$-topologies. By Portmanteau theorem, it follows that
$$\limsup_{n \to \infty }P_n\big(\{x; d_{J_1}^0\big(x(\delta),x(0) \big) \}\big) \leq P\big(\{x; d_{J_1}^0\big(x(\delta),x(0) \big) \}\big)<\eta.$$

We prove that part (c) of $(iii')$ holds. By the left continuity of elements in $\bD([0,1];\bD)$,
$P\big(\{x; d_{J_1}^0\big(x(1-),x(1-\delta) \big) \geq \e \}\big) \to 0$ as $\delta \to 0$, for any $\e>0$. By \eqref{analogue-13-9}, it follows that
$P\big(\{x; d_{J_1}^0\big(x(1),x(1-) \big) \geq \e \}\big) =0$, for any $\e>0$. Hence,
$P\big(\{x; d_{J_1}^0\big(x(1),x(1-) \big)>0 \}\big) =0$.
The rest of the argument is the same as for part (b). $\Box$

\vspace{3mm}

The previous theorem can also be stated in terms of random elements, as follows.

\begin{theorem}
\label{analogue-th-13-3-distr}
Let $(X_n)_{n\geq 1}$ and $X$ be random elements in $\bD([0,1];\bD)$ defined on the same probability space. Let
$T_X=\{t \in [0,1];P(X(t)=X(t-))=1\}$.  Suppose that:\\
a) $\big(X_n(t_1),\ldots,X_n(t_k) \big) \stackrel{d}{\to} \big(X(t_1),\ldots,X(t_k) \big)$ in $\bD^k$,
for any $t_1,\ldots,t_k \in T_X$;\\
b) $d_{J_1}^0\big(X(1),X(1-\delta)\big) \stackrel{P}{\to}0$ as $\delta \to 0$;\\
c) for any $\e>0$,
\begin{align*}
\left\{
\begin{array}{ll}
& \lim_{\delta \to 0} \limsup_{n\to \infty} P\big(\{w''\big(X_n(t),\delta \big)\geq \e \ \mbox{for some} \ t \in [0,1] \}\big)=0, \\
&  \lim_{\delta \to 0}\limsup_{n \to \infty}P(|X_n(t,\delta)-X_n(t,0)| \geq  \e \ \mbox{for some} \ t \in [0,1])=0,\\
& \lim_{\delta \to 0}\limsup_{n \to \infty}P(|X_n(t,1-)-X_n(t,1-\delta)| \geq  \e \ \mbox{for some} \ t \in [0,1])=0;
\end{array}
\right.
\end{align*}
d) for any $\e>0$,
\begin{equation}
\label{w-second-cond}
\lim_{\delta \to 0}\limsup_{n\to \infty}P(w_{\bD}''(X_n,\delta)\geq \e)=0 \quad \mbox{for all} \ \e>0.
\end{equation}
Then $X_n \stackrel{d}{\to}X$ in $\bD([0,1];\bD)$ equipped with $d_{\bD}$.
\end{theorem}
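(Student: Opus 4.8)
The plan is to deduce Theorem~\ref{analogue-th-13-3-distr} from the measure-theoretic version, Theorem~\ref{analogue-th-13-3}, by translating each hypothesis about the random elements $X_n, X$ into the corresponding statement about their laws $P_n, P$ on $\big(\bD([0,1];\bD),\cD_{\bD}\big)$. First I would observe that since $X_n$ and $X$ are genuine random elements in $\bD([0,1];\bD)$, their laws $P_n, P$ are well-defined probability measures on $(\bD([0,1];\bD),\cD_{\bD})$; the finite-dimensional projections $\pi_{t_1,\ldots,t_k}^{\bD}$ are $\cD_{\bD}/\cD^k$-measurable (as recalled after Theorem~\ref{bD-CSMS}), so $P_n\circ(\pi_{t_1,\ldots,t_k}^{\bD})^{-1}$ is the law of $(X_n(t_1),\ldots,X_n(t_k))$ in $\bD^k$, and similarly for $P$.

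Next I would identify $T_P$ with $T_X$. By definition $t\in T_P$ iff $\pi_t^{\bD}$ is $d_{\bD}$-continuous a.s.\ with respect to $P$; by part b) of the preceding Lemma (continuity properties of $\pi_t^{\bD}$), for $t\in(0,1)$ this holds iff $P\big(\{x; t\in{\rm Disc}(x)\}\big)=0$, i.e.\ iff $P(x(t-)=x(t))=1$, which is exactly $P(X(t-)=X(t))=1$; and $0,1$ belong to both sets automatically. Hence $T_P=T_X$, and hypothesis a) says precisely that $P_n\circ(\pi_{t_1,\ldots,t_k}^{\bD})^{-1}\stackrel{w}{\to}P\circ(\pi_{t_1,\ldots,t_k}^{\bD})^{-1}$ in $\bD^k$ for all $t_1,\ldots,t_k\in T_P$, which is \eqref{fin-dim-conv}. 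Hypothesis b), $d_{J_1}^0(X(1),X(1-\delta))\stackrel{P}{\to}0$ as $\delta\to 0$, is convergence in probability, which is equivalent to the statement that for every $\e>0$, $P\big(\{x;d_{J_1}^0(x(1),x(1-\delta))\geq\e\}\big)\to 0$ as $\delta\to 0$; this is exactly \eqref{analogue-13-9}. Hypotheses c) and d) are, term by term, the statements that $(P_n)_{n\geq 1}$ satisfies parts $(ii')$ and $(iii'.a)$ of Theorem~\ref{analogue-13-8}, since the events appearing in c) and d) are the $\cD_{\bD}$-measurable sets whose $P_n$-probabilities are being controlled (measurability of the relevant sets, in particular those built from $w_{\bD}''(\cdot,\delta)$, $w''(x(t),\delta)$, and $d_{J_1}^0(x(t_1),x(t_2))$, is guaranteed by the measurability lemmas in Section~2 together with Lemma~\ref{cont-Phi}).

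With all four hypotheses of Theorem~\ref{analogue-th-13-3} verified for $(P_n)_{n\geq 1}$ and $P$, that theorem yields $P_n\stackrel{w}{\to}P$, which by definition of convergence in distribution means $X_n\stackrel{d}{\to}X$ in $\bD([0,1];\bD)$ equipped with $d_{\bD}$. This completes the proof.

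The only point requiring a little care — the ``main obstacle'', though it is minor — is the bookkeeping of which events in c) require sup over $t\in[0,1]$ (hence a genuinely $\cD_{\bD}$-measurable set rather than a projection), and the verification that the set $\{x;w''(x(t),\delta)\geq\e\ \text{for some}\ t\in[0,1]\}$ and its analogues are measurable: this follows because $w''(\cdot,\delta)$ is measurable on $\bD$ (it can be computed as a sup over rationals of continuous functionals), $\pi_t^{\bD}$ is measurable, and the ``for some $t\in[0,1]$'' can again be reduced to a countable sup over rational $t$ using the right-continuity and left-limit structure of elements of $\bD([0,1];\bD)$ in the $J_1$ topology. Since Theorem~\ref{analogue-13-8} already presupposes these sets are measurable, no new work is needed here, and the proof is essentially a dictionary translation.
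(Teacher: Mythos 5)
Your proposal is correct and matches the paper's intent exactly: the paper offers no separate proof of Theorem~\ref{analogue-th-13-3-distr}, presenting it simply as Theorem~\ref{analogue-th-13-3} ``stated in terms of random elements,'' and your dictionary translation (identifying $T_P$ with $T_X$, matching hypothesis b) with \eqref{analogue-13-9}, and c), d) with parts $(ii')$ and $(iii'.a)$ of Theorem~\ref{analogue-13-8}) is precisely that restatement, with the measurability bookkeeping made explicit.
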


\begin{remark}
{\rm Hypothesis {\em c)} of Theorem \ref{analogue-th-13-3-distr} may be difficult to verify in practice. In the proof of Theorem 3.14 of \cite{balan-saidani18}, this hypothesis is verified by showing that
\begin{equation}
\label{inf-sup-cond}
\inf_{n_0 \geq 1}\sup_{n\geq n_0}P(\|X_n-X_{n_0}\|_{\bD} \geq \e)=0 \quad \mbox{for all} \ \e>0.
\end{equation}
Since for any $n_0 \geq 1$, the single probability measure $P \circ X_{n_0}^{-1}$ is tight in $\bD([0,1];\bD)$, part $(ii')$ of Theorem \ref{analogue-13-8} gives:
\begin{align*}
\left\{
\begin{array}{ll}
& \lim_{\delta \to 0}  P\big(\{w''\big(X_{n_0}(t),\delta \big)\geq \e \ \mbox{for some} \ t \in [0,1] \}\big)=0, \\
&  \lim_{\delta \to 0}P(|X_{n_0}(t,\delta)-X_{n_0}(t,0)| \geq  \e \ \mbox{for some} \ t \in [0,1])=0,\\
& \lim_{\delta \to 0}P(|X_{n_0}(t,1-)-X_{n_0}(t,1-\delta)| \geq  \e \ \mbox{for some} \ t \in [0,1])=0;
\end{array}
\right.
\end{align*}
Hypothesis {\em c)} then follows from \eqref{inf-sup-cond}, using the following inequalities:
\begin{align*}
w''\big(X_n(t),\delta\big) & \leq w''\big(X_{n_0}(t),\delta\big)+2 \|X_{n}-X_{n_0}\|_{\bD} \\
|X_{n}(t,\delta)-X_{n}(t,0)| & \leq |X_{n_0}(t,\delta)-X_{n_0}(t,0)|+2 \|X_{n}-X_{n_0}\|_{\bD} \\
|X_{n}(t,1-)-X_{n}(t,1-\delta)| & \leq |X_{n_0}(t,1-)-X_{n_0}(t,1-\delta)| +2 \|X_{n}-X_{n_0}\|_{\bD}.
\end{align*}
}
\end{remark}

\section{Criteria for existence and convergence}

In this section, we give a criterion for weak convergence of random elements in $\bD([0,1];\bD)$, and a criterion for the existence of a process with sample paths in $\bD([0,1];\bD)$ based on its finite-dimensional distributions. Both these results rely on some maximal inequalities which are of independent interest.

The first two results are analogue of Theorems 10.3 and 10.4 of \cite{billingsley99}, stated in terms of the Skorohod distance $d_{J_1}^0$.

\begin{theorem}
\label{analogue-th-10-3}
Let $T$ be a Borel set in $[0,1]$ and $\{X(t)\}_{t \in T}$ a collection of random elements in $\bD$
defined on the same probability space $(\Omega,\cF,P)$ such that the map $T \ni t \mapsto X(\omega,t)$ is right-continuous with respect to $J_1$, for any $\omega \in \Omega$. (If $T$ is finite, this imposes no restriction.)  For any $r,s,t \in T$ with $r\leq s \leq t$, let
\begin{equation}
\label{def-m-rst}
m_{rst}^{J_1}=d_{J_1}^0\big(X(r),X(s)\big) \wedge d_{J_1}^0\big(X(s),X(t)\big)
\end{equation}
and
$L_{J_1}(X)=\sup_{r,s,t \in T;\, r\leq s \leq t} m_{rst}^{J_1}$. Suppose that there exist $\alpha>1/2$, $\beta \geq 0$ and a finite measure $\mu$ on $T$ such that for any $\lambda>0$ and for any $r,s,t \in T$ with $r\leq s \leq t$,
\begin{equation}
\label{analogue-10-15}
P(m_{rst}^{J_1} \geq \lambda) \leq \frac{1}{\lambda^{4\beta}} \{\mu(T \cap (r,t])\}^{2\alpha}.
\end{equation}
Then there exists a constant $K$ depending on $\alpha$ and $\beta$ such that for any $\lambda>0$,
\begin{equation}
\label{analogue-10-16}
P(L_{J_1}(X)>\lambda) \leq \frac{K}{\lambda^{4\beta}}\mu^{2\alpha}(T).
\end{equation}
\end{theorem}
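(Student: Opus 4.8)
\emph{Strategy.} The statement is the $d_{J_1}^0$-version of Theorem 10.3 of \cite{billingsley99}, and my plan is to follow that proof. The point that makes this legitimate is that the classical argument manipulates the quantities $|X(r)-X(s)|$ only through the triangle inequality and the trivial bound $u\wedge v\le u$; it never uses the linear structure of $\bR$. Since $(\bD,d_{J_1}^0)$ is a metric space, $d_{J_1}^0$ satisfies the triangle inequality, so every step transcribes verbatim with $|X(r)-X(s)|$ replaced by $d_{J_1}^0(X(r),X(s))$ and $m_{rst}$ by $m_{rst}^{J_1}$. I would proceed in two stages: first reduce to the case that $T$ is finite, then treat the finite case by induction on $|T|$ using a bisection of the measure $\mu$.

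\emph{Reduction to finite $T$.} I would fix a countable set $T_0\subseteq T$ that is dense in $T$, together with the (countably many) points of $T$ that are isolated from the right in $T$, so that each $t\in T$ either lies in $T_0$ or is the limit of a strictly decreasing sequence in $T_0$. Writing $T_0=\bigcup_k T_k$ as an increasing union of finite sets, I would approximate an arbitrary triple $r<s<t$ of $T$ from the right by triples in $T_0$; by the right-continuity of $\omega\mapsto X(\omega,\cdot)$ with respect to $J_1$ and the continuity of $d_{J_1}^0$ (Lemma \ref{cont-Phi}), the associated values of $m^{J_1}$ converge to $m_{rst}^{J_1}$. Hence $L_{J_1}(X)=\sup_k L_{J_1}^{T_k}(X)$ almost surely (which also shows $L_{J_1}(X)$ is measurable, being a countable supremum), the events $\{L_{J_1}^{T_k}(X)>\lambda\}$ increase to $\{L_{J_1}(X)>\lambda\}$, and since $\mu(T_k\cap(r,t])\le\mu(T\cap(r,t])$ and $\mu(T_k)\le\mu(T)$, hypothesis \eqref{analogue-10-15} holds for each $T_k$ and \eqref{analogue-10-16} for $T$ follows from \eqref{analogue-10-16} for the $T_k$'s by monotone convergence.

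\emph{The finite case.} Write $T=\{u_0<\dots<u_N\}$ and induct on $N$. For $N\le1$ there is no triple with distinct coordinates, so $L_{J_1}(X)=0$ and \eqref{analogue-10-16} is trivial for any $K\ge1$. For $N\ge2$, with $C=\mu(T\cap(u_0,u_N])$, I would choose a splitting index $u_h$ distributing the $\mu$-mass as evenly as possible between $\{u_0,\dots,u_h\}$ and $\{u_h,\dots,u_N\}$ (handling, as in \cite{billingsley99}, the degenerate case in which a single point carries more than half of $C$). Every triple $r\le s\le t$ of $T$ is contained in one of the two halves or straddles $u_h$; for a straddling triple I would use the triangle inequality for $d_{J_1}^0$, exactly as in the proof of Theorem 10.3 of \cite{billingsley99}, to bound $m_{rst}^{J_1}$ in terms of the two sub-problems. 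Combining this decomposition of $\{L_{J_1}(X)>\lambda\}$ with the induction hypothesis for the two halves and with the superadditivity $p^{2\alpha}+q^{2\alpha}\le(p+q)^{2\alpha}$ — valid precisely because $2\alpha>1$ — one obtains, after chasing constants, $P(L_{J_1}(X)>\lambda)\le K\lambda^{-4\beta}C^{2\alpha}$ for a suitable $K=K(\alpha,\beta)$; since $C\le\mu(T)$ this gives \eqref{analogue-10-16}.

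\emph{The main obstacle.} The real work lies entirely in the finite induction: correctly organizing the straddling-triple case, making the right choice of split point (including the heavy-point degeneracy), and verifying that the recursion closes with a finite constant — this is where the hypothesis $\alpha>1/2$ is used, through $p^{2\alpha}+q^{2\alpha}\le(p+q)^{2\alpha}$ and the geometric decay it produces along the bisection tree. By contrast the passage from $\bR$ to $\bD$ is inexpensive: all that is needed of $d_{J_1}^0$ is the triangle inequality and joint continuity (Lemma \ref{cont-Phi}), and these are exactly the properties the classical proof relies on.
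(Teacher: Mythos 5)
Your overall strategy is sound, but it follows a genuinely different route through Billingsley's Section 10 than the paper does. The paper's proof treats first the special case $T=[0,1]$ with $\mu$ Lebesgue measure via the dyadic construction: with $D_k=\{i/2^k\}$, it sets $B_k$ (the maximum of $m^{J_1}$ over triples in $D_k$) and $A_k$ (the maximum over adjacent dyadic triples), uses the combinatorial inequality $B_k\le 2(A_1+\cdots+A_k)$, shows $L_{J_1}(X)=\lim_k B_k$ by right-continuity together with Lemma \ref{cont-Phi}, and then sums the tail estimates for the $A_k$ (this is where $\alpha>1/2$ enters, through the convergence of $\sum_k 2^{-k(2\alpha-1)}$); general $(T,\mu)$ is then reduced to this special case as in Billingsley. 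You instead reduce directly to finite $T$ and run the median-split induction of Billingsley's Theorem 10.1, where $\alpha>1/2$ enters through the superadditivity of $x\mapsto x^{2\alpha}$. Both routes use right-continuity plus Lemma \ref{cont-Phi} in the same way (to pass from a countable family of triples to all of $T$), and both defer comparable combinatorial work to Billingsley; yours has the advantage of avoiding the separate reduction of general $(T,\mu)$ to the Lebesgue case, while the paper's tracks its source more literally.

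One step of your reduction is stated incorrectly, though it is easily repaired. To apply the finite case to $T_k=\{u_0<\cdots<u_N\}\subset T$ you need a measure $\nu_k$ on $T_k$ satisfying $\mu(T\cap(r,t])\le\nu_k(T_k\cap(r,t])$ for all $r,t\in T_k$ (so that \eqref{analogue-10-15} transfers to $T_k$) together with $\nu_k(T_k)\le\mu(T)$. The inequality you invoke, $\mu(T_k\cap(r,t])\le\mu(T\cap(r,t])$, points the wrong way for this purpose: if $\mu$ charges no point of the finite set $T_k$ (for instance $\mu$ Lebesgue), then the restriction of $\mu$ to $T_k$ is the zero measure and \eqref{analogue-10-15} fails for it. The standard fix is to push the mass of each gap onto its right endpoint, $\nu_k(\{u_i\})=\mu(T\cap(u_{i-1},u_i])$, which gives $\nu_k(T_k\cap(u_i,u_j])=\mu(T\cap(u_i,u_j])$ and $\nu_k(T_k)\le\mu(T)$; with this choice your monotone-convergence argument closes correctly.
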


\noindent {\bf Proof:} We follow the same idea as in the proof of Theorem 10.3 of \cite{billingsley99}, replacing increments of the form $|X(t)-X(s)|$ by $d_{J_1}^0(X(t),X(s))$.

{\em Case 1.} $T=[0,1]$ and $\mu$ is the Lebesgue measure. Let $D_k=\{i/2^k;0\leq i\leq 2^k\}$. Define
$B_k$ be the maximum of all $m_{t_1 t_2 t_3}^{J_1}$ for all $t_1,t_2,t_3 \in D_k$ with $t_1 \leq t_2 \leq t_3$ and
$A_k$ be the maximum of $m_{t_1 t_2 t_3}^{J_1}$ with $t_1=(i-1)/2^{k}$, $t_2=i/2^{k}$ and $t_3=(i+1)/2^{k}$, for $i=1,\ldots,2^k-1$. It can be proved that $B_k \leq 2(A_1+\ldots+A_k)$ for any $k\geq 1$. Note that $B_k \leq B_{k+1}$ for all $k\geq 1$. We claim that:
\begin{equation}
\label{LJ1=limit}
L_{J_1}(X)=\lim_{k \to \infty}B_k.
\end{equation}
To see this, let $\e>0$ be arbitrary. Let $t_1,t_2,t_3 \in T$ be such that $t_1 \leq t_2 \leq t_3$. For each $k\geq 1$, there exist $t_1^k,t_2^k,t_3^k \in D_k$ with $t_1^k \leq t_2^k \leq t_3^k$ such that $t_i^k \downarrow t_i$ as $k \to \infty$, for $i=1,2,3$. Since $t \mapsto X(t)$ is right-continuous with respect to $J_1$,
$X(t_i^k) \stackrel{J_1}{\to} X(t_i)$ as $k \to \infty$, for $i=1,2,3$. By Lemma \ref{cont-Phi},
$a_k=d_{J_1}^0\big(X(t_1^k),X(t_2^k) \big) \to a=d_{J_1}^0\big(X(t_1),X(t_2) \big)$ as $k \to \infty$ and
$b_k=d_{J_1}^0\big(X(t_2^k),X(t_3^k) \big) \to b=d_{J_1}^0\big(X(t_2),X(t_3) \big)$ as $k \to \infty$.
Hence, there exists $k_{\e}$ such that $a_{k_e} \geq a-\e$ and $b_{k_{\e}} \geq b-\e$. So, $a \wedge b \leq a_{k_e} \wedge b_{k_e} +\e \leq B_{k_e}+\e$. Since $t_1,t_2,t_3$ were arbitrary, we obtain that $L_{J_1}(X) \leq B_{k_e}+\e$.

From \eqref{LJ1=limit}, it follows that $L_{J_1}(X) \leq 2\sum_{k\geq 1}A_k$. From this, we deduce relation \eqref{analogue-10-16} using \eqref{analogue-10-15} to estimate the tail probability of $A_k$ (see page 110 of \cite{billingsley99}).

The other cases follow as in the proof of Theorem 10.3 of \cite{billingsley99}. $\Box$

\vspace{3mm}


\begin{corollary}
\label{analogue-th-10-4}
If condition \eqref{analogue-10-15} of Theorem \ref{analogue-th-10-3} only holds for $t-r<2\delta$, then
$$P(L_{J_1}(X,\delta)>\lambda) \leq \frac{2K}{\lambda^{4\beta}}\mu(T) \sup_{0\leq t \leq 1-2\delta}\mu^{2\alpha-1}\big(T\cap [t,t+2\delta]\big),$$
where $L_{J_1}(X,\delta)$ is the supremum of $m_{rst}^{J_1}$ for all $r,s,t \in T$ with $r\leq s \leq t$ and $t-r<\delta$, and $m_{rst}^{J_1}$ is given by \eqref{def-m-rst}. In particular, if $T=[0,1]$, then $L_{J_1}(X,\delta)=w_{\bD}''(X,\delta)$.
\end{corollary}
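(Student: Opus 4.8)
The plan is to obtain this corollary from Theorem~\ref{analogue-th-10-3} by the usual localization (block-covering) argument, following the passage from Theorem 10.3 to Theorem 10.4 in \cite{billingsley99}. Fix $\delta\in(0,1/2)$ and cover $[0,1]$ by the finitely many half-open blocks $I_k:=[k\delta,(k+2)\delta)\cap[0,1]$, $k\geq 0$. Two elementary facts drive the proof. First, any $r\leq s\leq t$ in $T$ with $t-r\leq\delta$ satisfies $r\in[k\delta,(k+1)\delta)$ for a unique $k$, and then $t\leq r+\delta<(k+2)\delta$, so the whole triple lies in $T\cap I_k$; consequently $L_{J_1}(X,\delta)\leq\max_k L_{J_1}\big(X|_{T\cap I_k}\big)$, where $X|_{T\cap I_k}$ denotes the restriction of the process to the index set $T\cap I_k$. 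Second, each point of $[0,1]$ belongs to at most two of the $I_k$, so $\sum_k\mu(T\cap I_k)=\int_T\sum_k 1_{I_k}\,d\mu\leq 2\mu(T)$.

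Next I would apply Theorem~\ref{analogue-th-10-3} separately on each block, with $T$ replaced by $T\cap I_k$ and $\mu$ by its restriction to $T\cap I_k$. This is legitimate: the restricted process remains right-continuous with respect to $J_1$ on $T\cap I_k$, the restricted measure is finite, and for $r,s,t\in T\cap I_k$ with $r\leq s\leq t$ one has $(r,t]\subset I_k$ and $t-r<2\delta$, so the hypothesis \eqref{analogue-10-15} is available and reads $P(m^{J_1}_{rst}\geq\lambda)\leq\lambda^{-4\beta}\{\mu(T\cap(r,t])\}^{2\alpha}$. Since $\alpha>1/2$, Theorem~\ref{analogue-th-10-3} gives, with $K=K(\alpha,\beta)$ the constant of that theorem,
\[
P\big(L_{J_1}(X|_{T\cap I_k})>\lambda\big)\leq\frac{K}{\lambda^{4\beta}}\,\mu^{2\alpha}(T\cap I_k)=\frac{K}{\lambda^{4\beta}}\,\mu(T\cap I_k)\,\mu^{2\alpha-1}(T\cap I_k).
\]
Because $2\alpha-1>0$ and each $I_k$ is contained in an interval $[t,t+2\delta]$ with $0\leq t\leq 1-2\delta$ (for the last, incomplete block $I_k=[k\delta,1]$ one uses that $1\in I_k$ forces $k\delta>1-2\delta$), we may bound $\mu^{2\alpha-1}(T\cap I_k)\leq\sup_{0\leq t\leq1-2\delta}\mu^{2\alpha-1}(T\cap[t,t+2\delta])=:M_\delta$.

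Putting the pieces together, a union bound over the finitely many $k$ gives
\[
P\big(L_{J_1}(X,\delta)>\lambda\big)\leq\sum_k P\big(L_{J_1}(X|_{T\cap I_k})>\lambda\big)\leq\frac{K\,M_\delta}{\lambda^{4\beta}}\sum_k\mu(T\cap I_k)\leq\frac{2K}{\lambda^{4\beta}}\,\mu(T)\,M_\delta,
\]
which is the asserted inequality. For $T=[0,1]$ one reads off $L_{J_1}(X,\delta)=w_{\bD}''(X,\delta)$ directly by comparing the definition of $L_{J_1}(X,\delta)$ with \eqref{def-wD-second}. I do not anticipate a genuine obstacle here; the only points requiring a little care are the bookkeeping at the right endpoint (the incomplete block) and the measurability of each $L_{J_1}(X|_{T\cap I_k})$, which is inherited from the corresponding discussion inside the proof of Theorem~\ref{analogue-th-10-3} (its representation as the increasing limit of $B_k$ over dyadic triples).
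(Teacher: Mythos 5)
Your argument is correct and is precisely the one the paper intends: the corollary is stated without proof as the analogue of Theorem 10.4 of \cite{billingsley99}, and the standard localization to overlapping blocks of length $2\delta$, each covered at most twice, combined with Theorem \ref{analogue-th-10-3} applied to the restricted process and restricted measure on each block, is exactly that argument. Your bookkeeping at the right endpoint and the factor $2$ from the double cover both check out, so there is nothing to add.
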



The following result gives a criterion for convergence in distribution in the space $\bD([0,1];\bD)$. being the analogue of Theorem 13.5 of \cite{billingsley99}. 

\begin{theorem}
\label{analogue-th-13-5}
Let $(X_n)_{n\geq 1}$ and $X$ be random elements in $\bD([0,1];\bD)$ defined on the same probability space, such that hypotheses a),b),c) of Theorem \ref{analogue-th-13-3-distr} hold. If there exist $\alpha >1/2$, $\beta \geq 0$ and a non-decreasing continuous function $F$ on $[0,1]$ such that for any $r,s,t \in [0,1]$ with $r \leq s \leq t$, for any $\lambda>0$ and for any $n\geq 1$,
$$P\big(d_{J_1}^0(X_n(r),X_n(s)) \wedge d_{J_1}^0(X_n(s),X_n(t)) \geq \lambda \big) \leq \frac{1}{\lambda^{4\beta}}[F(t)-F(r)]^{2\alpha},$$
then $X_n \stackrel{d}{\to} X$ in $\bD([0,1];\bD)$ equipped with $d_{\bD}$.
\end{theorem}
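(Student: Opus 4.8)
The plan is to derive this from Theorem \ref{analogue-th-13-3-distr} by verifying its remaining hypothesis, namely part d), which requires $\lim_{\delta \to 0}\limsup_{n\to\infty}P(w_{\bD}''(X_n,\delta)\geq \e)=0$ for every $\e>0$. Hypotheses a), b), c) are assumed directly, so the entire content of the proof is the passage from the pointwise (in $r\leq s\leq t$) moment-type bound on $d_{J_1}^0(X_n(r),X_n(s))\wedge d_{J_1}^0(X_n(s),X_n(t))$ to a uniform bound on the modulus $w_{\bD}''(X_n,\delta)$. This is exactly the role played by the maximal inequality in Corollary \ref{analogue-th-10-4}.

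Concretely, I would first fix $n$ and apply Theorem \ref{analogue-th-10-3} and Corollary \ref{analogue-th-10-4} with $T=[0,1]$, $\mu$ equal to the Lebesgue--Stieltjes measure $dF$ associated with the non-decreasing continuous function $F$ (so $\mu(T\cap(r,t]) = F(t)-F(r)$), and with the random element $t\mapsto X_n(t)$, which lies in $\bD([0,1];\bD)$ and is in particular right-continuous with respect to $J_1$. The hypothesis of the theorem gives precisely inequality \eqref{analogue-10-15} for this $\mu$, uniformly in $n$; and since $F$ is continuous, $\mu$ has no atoms. Corollary \ref{analogue-th-10-4} then yields, with $L_{J_1}(X_n,\delta)=w_{\bD}''(X_n,\delta)$,
\begin{equation*}
P\big(w_{\bD}''(X_n,\delta)>\lambda\big) \leq \frac{2K}{\lambda^{4\beta}}\,F(1)\,\sup_{0\leq t\leq 1-2\delta}\big[F(t+2\delta)-F(t)\big]^{2\alpha-1},
\end{equation*}
where $K=K(\alpha,\beta)$ is the constant from Theorem \ref{analogue-th-10-3}. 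This bound is uniform in $n$, so taking $\limsup_{n\to\infty}$ changes nothing.

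Next I would let $\delta\to 0$: since $F$ is continuous on the compact interval $[0,1]$, it is uniformly continuous, so $\sup_{0\leq t\leq 1-2\delta}[F(t+2\delta)-F(t)]\to 0$ as $\delta\to 0$, and because $2\alpha-1>0$ the right-hand side above tends to $0$. Hence for every $\e>0$, $\lim_{\delta\to 0}\limsup_{n\to\infty}P(w_{\bD}''(X_n,\delta)\geq \e)=0$, which is hypothesis d) of Theorem \ref{analogue-th-13-3-distr}. Together with the assumed hypotheses a), b), c), that theorem gives $X_n\stackrel{d}{\to}X$ in $\bD([0,1];\bD)$ equipped with $d_{\bD}$, completing the proof.

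The only genuinely delicate point is making sure Corollary \ref{analogue-th-10-4} applies with the non-Lebesgue measure $\mu=dF$; but Theorem \ref{analogue-th-10-3} is already stated for an arbitrary finite measure $\mu$ on $T$, and its ``other cases'' reduce the general finite-measure statement to Case 1 by a time change, so there is nothing new to prove here — the work has been front-loaded into those earlier results. I therefore expect this proof to be short: it is essentially a one-line invocation of the maximal inequality followed by the uniform continuity of $F$, and then an appeal to Theorem \ref{analogue-th-13-3-distr}.
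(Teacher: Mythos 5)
Your proof is correct and takes essentially the same route as the paper: the paper's own proof is a two-line invocation of Theorem \ref{analogue-th-13-3-distr} combined with Corollary \ref{analogue-th-10-4} applied with $T=[0,1]$, which is exactly your argument (you simply spell out the choice $\mu=dF$ and the use of uniform continuity of $F$ to send the bound to zero as $\delta\to 0$). The only cosmetic slip is writing $\mu(T)=F(1)$ rather than $F(1)-F(0)$, which does not affect anything.
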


\begin{proof}
We apply Theorem \ref{analogue-th-13-3-distr}. Hypothesis \eqref{w-second-cond} of this theorem is verified using Corollary \ref{analogue-th-10-4} with $T=[0,1]$.
\end{proof}

The goal of the remaining part of this section is to give a criterion for the existence of a process with sample paths in $\bD([0,1];\bD)$. For this, we first need to state a variant of Theorem \ref{analogue-th-10-3} using the uniform norm $\|\cdot\|$ on $\bD$ (instead of the Skorohod distance $d_{J_1}^0$).

 \begin{theorem}
 \label{variant-th-10-3}
Let $T$ be a Borel set in $[0,1]$ and $\{X(t)\}_{t \in T}$ a collection of random elements in $\bD$
defined on the same probability space $(\Omega,\cF,P)$ such that the map $T \ni t \mapsto X(\omega,t)$ is right-continuous with respect to the uniform norm on $\bD$, for any $\omega \in \Omega$. (If $T$ is finite, this imposes no restriction.)  For any $r,s,t \in T$ with $r\leq s \leq t$, let
\begin{equation}
\label{def-m-rst-u}m_{rst}^{u}=\|X(r)-X(s)\| \wedge\|X(s)-X(t)\|
\end{equation} and
$L_{u}(X)=\sup_{r,s,t \in T;\, r\leq s \leq t} m_{rst}^{u}$. Suppose that there exist $\alpha>1/2$, $\beta \geq 0$ and a finite measure $\mu$ on $T$ such that for any $\lambda>0$ and for any $r,s,t \in T$ with $r\leq s \leq t$,
\begin{equation}
\label{analogue-10-15-variant}
P(m_{rst}^{u} \geq \lambda) \leq \frac{1}{\lambda^{4\beta}} \{\mu(T \cap (r,t])\}^{2\alpha}.
\end{equation}
Then there exists a constant $K$ depending on $\alpha$ and $\beta$ such that for any $\lambda>0$,
\begin{equation}
\label{analogue-10-16-variant}
P(L_{u}(X)>\lambda) \leq \frac{K}{\lambda^{4\beta}}\mu^{2\alpha}(T).
\end{equation}
\end{theorem}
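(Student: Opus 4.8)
The plan is to reproduce the proof of Theorem \ref{analogue-th-10-3} almost word for word, replacing every increment $d_{J_1}^0\big(X(\cdot),X(\cdot)\big)$ by $\|X(\cdot)-X(\cdot)\|$. Inspection of that proof shows that the Skorohod metric $d_{J_1}^0$ entered through only two features: the triangle inequality (used to obtain the combinatorial bound $B_k\leq 2(A_1+\ldots+A_k)$) and the joint continuity of $(x,y)\mapsto d_{J_1}^0(x,y)$ on $\bD\times\bD$ (Lemma \ref{cont-Phi}, used to establish \eqref{LJ1=limit}). Both properties hold for the uniform norm: $\|\cdot\|$ is a genuine norm on $\bD$, and $(x,y)\mapsto\|x-y\|$ is continuous with respect to the product of uniform topologies since $\bigl|\,\|x_n-y_n\|-\|x-y\|\,\bigr|\leq\|x_n-x\|+\|y_n-y\|$. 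Here the hypothesis that $t\mapsto X(\omega,t)$ is right-continuous for the uniform norm plays the role that $J_1$-right-continuity played in Theorem \ref{analogue-th-10-3}.

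Concretely, in Case 1 ($T=[0,1]$, $\mu$ the Lebesgue measure), I would set $D_k=\{i/2^k:0\leq i\leq 2^k\}$, let $B_k$ be the maximum of $m_{t_1t_2t_3}^{u}$ over all $t_1\leq t_2\leq t_3$ in $D_k$, and let $A_k$ be the maximum of $m_{t_1t_2t_3}^{u}$ over the consecutive triples $t_1=(i-1)/2^k$, $t_2=i/2^k$, $t_3=(i+1)/2^k$ with $1\leq i\leq 2^k-1$, where $m_{rst}^{u}$ is as in \eqref{def-m-rst-u}. The triangle inequality for $\|\cdot\|$ gives $B_k\leq 2(A_1+\ldots+A_k)$ exactly as in \cite{billingsley99}. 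The sequence $(B_k)$ is non-decreasing, and $L_{u}(X)=\lim_{k\to\infty}B_k$: given $r\leq s\leq t$ in $[0,1]$, choose dyadic $t_i^k\downarrow t_i$; right-continuity in the uniform norm yields $\|X(t_i^k)-X(t_j^k)\|\to\|X(t_i)-X(t_j)\|$, so $m_{rst}^{u}\leq B_{k_{\e}}+\e$ for $k_{\e}$ large, whence $L_{u}(X)\leq B_{k_{\e}}+\e$, while the reverse inequality is immediate. Therefore $L_{u}(X)\leq 2\sum_{k\geq 1}A_k$, and estimating the tail of each $A_k$ via \eqref{analogue-10-15-variant} precisely as on page 110 of \cite{billingsley99} yields \eqref{analogue-10-16-variant}.

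The remaining cases (arbitrary Borel $T$ and arbitrary finite measure $\mu$) reduce to Case 1 by the same change-of-variables device as in the proof of Theorem 10.3 of \cite{billingsley99}: one pushes $\mu$ forward to the Lebesgue measure using the generalized inverse of its distribution function and relabels the random elements accordingly. I expect no genuine obstacle here; the only point worth spelling out is that no step of Billingsley's argument uses anything about the increments beyond the triangle inequality and joint continuity — and both of these the uniform norm enjoys. In fact one could merge Theorem \ref{analogue-th-10-3} and Theorem \ref{variant-th-10-3} into a single statement valid for any metric $d$ on $\bD$ such that $(x,y)\mapsto d(x,y)$ is continuous and with respect to which $t\mapsto X(\omega,t)$ is right-continuous, but stating the two versions separately is more convenient for the applications.
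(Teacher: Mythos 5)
Your proposal is correct and matches the intended argument: the paper states this theorem without a separate proof precisely because it is obtained from the proof of Theorem \ref{analogue-th-10-3} by replacing $d_{J_1}^0$ with the uniform norm, which is exactly what you do, and you correctly identify that the only properties of the increment functional used there are the triangle inequality and joint continuity (the analogue of Lemma \ref{cont-Phi}), both of which hold for $\|\cdot\|$, with uniform-norm right-continuity of $t\mapsto X(\omega,t)$ standing in for $J_1$-right-continuity in the dyadic approximation step.
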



\begin{corollary}
\label{analogue-th-10-variant}
If condition \eqref{analogue-10-15-variant} of Theorem \ref{variant-th-10-3} only holds for $t-r<2\delta$, then
$$P(L_{u}(X,\delta)>\lambda) \leq \frac{2K}{\lambda^{4\beta}}\mu(T) \sup_{0\leq t \leq 1-2\delta}\mu^{2\alpha-1}\big(T\cap [t,t+2\delta]\big),$$
where $L_{u}(X,\delta)$ is the supremum of $m_{rst}^{u}$ for all $r,s,t \in T$ with $r\leq s \leq t$ and $t-r<\delta$, and $m_{rst}^{u}$ is given by \eqref{def-m-rst-u}. In particular, if $T=[0,1]$, then $L_{u}(X,\delta)=w_{u}''(X,\delta)$, where
$$w_{u}''(x,\delta)=\sup_{t_1 \leq t \leq t_2,t_2-t_1 \leq \delta}\big(\|x(t)-x(t_1)\|\wedge \|x(t_2)-x(t)\|\big)$$
for any $x \in \bD([0,1];\bD)$.
\end{corollary}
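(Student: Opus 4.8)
The plan is to deduce Corollary \ref{analogue-th-10-variant} from Theorem \ref{variant-th-10-3} by exactly the same chaining/localization argument that passes from Theorem \ref{analogue-th-10-3} to Corollary \ref{analogue-th-10-4}; indeed, Theorem \ref{variant-th-10-3} is the verbatim analogue of Theorem \ref{analogue-th-10-3} with $d_{J_1}^0$ replaced throughout by the uniform norm $\|\cdot\|$, so all the probabilistic estimates transfer without change. First I would partition $[0,1]$ into a bounded number of overlapping blocks of length $2\delta$: for $j=0,1,\ldots$ set $I_j=[j\delta,(j+2)\delta]\cap[0,1]$, so that any triple $r\le s\le t$ with $t-r<\delta$ lies entirely inside some $I_j$, and the number of blocks is at most $\lceil 1/\delta\rceil$. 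On each block $I_j$ one applies Theorem \ref{variant-th-10-3} with $T$ replaced by $T\cap I_j$ and with the same $\alpha,\beta$ and the restricted measure $\mu(\cdot\cap I_j)$; hypothesis \eqref{analogue-10-15-variant} holds on this block precisely because it is assumed for $t-r<2\delta$ and $\mathrm{diam}(I_j)\le 2\delta$. This yields
$$
P\big(L_u(X\restriction_{I_j})>\lambda\big)\le \frac{K}{\lambda^{4\beta}}\,\mu^{2\alpha}(T\cap I_j)
\le \frac{K}{\lambda^{4\beta}}\,\mu(T\cap I_j)\,\sup_{0\le t\le 1-2\delta}\mu^{2\alpha-1}\big(T\cap[t,t+2\delta]\big).
$$

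Next I would sum over the blocks. Since $L_u(X,\delta)>\lambda$ forces $L_u(X\restriction_{I_j})>\lambda$ for at least one $j$, a union bound gives
$$
P\big(L_u(X,\delta)>\lambda\big)\le \sum_j P\big(L_u(X\restriction_{I_j})>\lambda\big)
\le \frac{K}{\lambda^{4\beta}}\Big(\sum_j \mu(T\cap I_j)\Big)\sup_{0\le t\le 1-2\delta}\mu^{2\alpha-1}\big(T\cap[t,t+2\delta]\big).
$$
Because consecutive blocks $I_j$ overlap in at most two layers, each point of $[0,1]$ is covered by at most two of them, so $\sum_j \mu(T\cap I_j)\le 2\mu(T)$. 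Substituting this bound produces exactly the claimed inequality
$$
P\big(L_u(X,\delta)>\lambda\big)\le \frac{2K}{\lambda^{4\beta}}\,\mu(T)\,\sup_{0\le t\le 1-2\delta}\mu^{2\alpha-1}\big(T\cap[t,t+2\delta]\big).
$$

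For the final assertion, when $T=[0,1]$ one simply unwinds the definitions: $L_u(X,\delta)$ is the supremum of $\|X(r)-X(s)\|\wedge\|X(s)-X(t)\|$ over all $r\le s\le t$ with $t-r<\delta$, which upon relabeling $r=t_1$, $s=t$, $t=t_2$ is precisely $w_u''(X,\delta)$ as defined in the statement; right-continuity of $t\mapsto X(\omega,t)$ in the uniform norm (which holds for an element of $\bD([0,1];\bD)$ since uniform convergence implies $J_1$-convergence — note this is the point requiring the uniform-norm variant rather than simply $\bD([0,1];\bD)$ membership) justifies reducing the supremum to rational times, so the event is measurable. I do not anticipate any genuine obstacle here: the only points needing care are the combinatorial overlap count giving the factor $2$, and checking that the localized hypothesis \eqref{analogue-10-15-variant} is available on each block of diameter $\le 2\delta$ — both are routine, mirroring the passage from Theorem \ref{analogue-th-10-3} to Corollary \ref{analogue-th-10-4}. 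If anything is delicate it is making sure the supremum over $0\le t\le 1-2\delta$ on the right-hand side genuinely dominates $\mu^{2\alpha-1}(T\cap I_j)$ for every block, including the last truncated one, which follows since $\mu$ is monotone under inclusion and every $I_j$ is contained in some interval $[t,t+2\delta]$ with $0\le t\le 1-2\delta$.
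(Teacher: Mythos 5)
Your argument is correct and is exactly the chaining/localization proof that the paper leaves implicit here (the corollary is stated without proof, as the direct analogue of Billingsley's Theorem~10.4, whose proof you reproduce): cover $[0,1]$ by overlapping length-$2\delta$ blocks, apply Theorem~\ref{variant-th-10-3} on each with the restricted measure, and sum. Two small blemishes, neither fatal: with closed blocks $I_j=[j\delta,(j+2)\delta]$ an atom of $\mu$ at a grid point $j\delta$ lies in three blocks, so to get the stated factor $2$ (rather than $3$) you should take the blocks half-open, $I_j=[j\delta,(j+2)\delta)$, which still captures every triple with $t-r<\delta$; and your parenthetical justification of right-continuity in the uniform norm has the implication backwards (uniform convergence implies $J_1$-convergence, so $\bD([0,1];\bD)$-membership does \emph{not} give uniform right-continuity --- rather, this is a standing hypothesis of Theorem~\ref{variant-th-10-3}, which is precisely why the uniform-norm variant is stated separately).
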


In the particular case when $T$ is a finite set, we obtain the following result, which is of independent interest.

\begin{theorem}
Let $\xi_1, \ldots, \xi_n$ be random elements in $\bD([0,1];\bD)$, $S_k=\xi_1+\ldots+\xi_k$ for $k=1,\ldots,n$, and $S_0=0$. Suppose that there exist $\alpha \geq 1/2$, $\beta>0$ and $u_i\geq 0 ,i=1,\ldots,n$ such that for any $\lambda>0$,
$$P\big(\|S_j-S_i\| \wedge \|S_k-S_j\| \geq \lambda \big) \leq \frac{1}{\lambda^{4\beta}} \Big(\sum_{j=i+1}^{k}u_j \Big)^{2\alpha}.$$
Then there exist a constant $K$ depending on $\alpha$ and $\beta$ such that for any $\lambda>0$,
$$P(M_n \geq \lambda) \leq \frac{K}{\lambda^{2\beta}} \Big(\sum_{i=1}^{n}u_i \Big)^{2\alpha},$$
where $M_n=\max_{0\leq i \leq j \leq k \leq n}\big(\|S_j-S_i\| \wedge \|S_k-S_j\| \big)$.
\end{theorem}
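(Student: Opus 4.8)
This is the finite-index-set case of Theorem~\ref{variant-th-10-3}, and I would obtain it from that theorem by realising the partial sums as a step function on a finite subset of $[0,1]$, with the finite measure $\mu$ chosen to have atoms of size $u_i$. (The proofs of Theorems~\ref{analogue-th-10-3} and~\ref{variant-th-10-3} use nothing about $\bD$ beyond the triangle inequality for the norm in question, so those statements remain valid with $\|\cdot\|_{\bD}$ in place of $\|\cdot\|$; it is therefore immaterial whether the $\xi_i$ are read as $\bD$-valued or as $\bD([0,1];\bD)$-valued.)

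First I would dispose of the trivial case $U:=\sum_{i=1}^n u_i=0$: letting $\lambda\downarrow0$ in the hypothesis forces $P\big(\|S_j-S_i\|\wedge\|S_k-S_j\|>0\big)=0$ for all $i\le j\le k$, so $M_n=0$ a.s. Assuming $U>0$, set $\tau_i=U^{-1}\sum_{l=1}^i u_l$ for $i=0,\dots,n$, so $0=\tau_0\le\tau_1\le\dots\le\tau_n=1$; if several $u_l$ vanish these points coincide, and one perturbs them to a strictly increasing sequence and lets the perturbation tend to $0$ at the end (this only enlarges the $u_l$ appearing in the bound, hence is harmless). Let $T=\{\tau_0,\dots,\tau_n\}\subset[0,1]$, define $X(\tau_i)=S_i$, and let $\mu$ be the finite measure on $T$ with $\mu(\{\tau_i\})=u_i$. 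Then $t\mapsto X(t)$ is right-continuous because $T$ is finite, $\mu\big(T\cap(\tau_i,\tau_k]\big)=\sum_{l=i+1}^k u_l$, and $L_u(X)=\sup_{r\le s\le t\in T}\big(\|X(r)-X(s)\|\wedge\|X(s)-X(t)\|\big)=M_n$; moreover the hypothesis of the theorem is exactly condition~\eqref{analogue-10-15-variant} for this $X$ and this $\mu$. Applying Theorem~\ref{variant-th-10-3} (valid here since $\alpha>1/2$, with its parameter $\beta$ matched to the exponent occurring in the hypothesis) then gives
$$P(M_n>\lambda)\le\frac{K}{\lambda^{2\beta}}\,\mu(T)^{2\alpha}=\frac{K}{\lambda^{2\beta}}\Big(\sum_{i=1}^n u_i\Big)^{2\alpha},\qquad K=K(\alpha,\beta),$$
which is the asserted estimate.

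The one point that genuinely needs separate treatment is the boundary value $\alpha=1/2$, which Theorem~\ref{variant-th-10-3} excludes. There I would rerun the bisection argument of Theorem~10.2 of \cite{billingsley99} directly: for the block maxima $M_{ik}:=\max_{i\le j\le k}\big(\|S_j-S_i\|\wedge\|S_k-S_j\|\big)$ one has $M_{ik}\le\max(M_{im},M_{mk})+\big(\|S_m-S_i\|\wedge\|S_k-S_m\|\big)$ for an index $m$ halving the $u$-mass of $(i,k]$; iterating down to singletons (where $M=0$), bounding each cross term by the hypothesis, and summing over scales, the only arithmetic input being the superadditivity inequality $\sum_b\big(\sum_{l\in b}u_l\big)^{2\alpha}\le\big(\sum_l u_l\big)^{2\alpha}$, which holds precisely because $2\alpha\ge1$ and is what keeps the final constant independent of $n$. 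Every increment estimate there uses only the triangle inequality, so it transfers without change to $\|\cdot\|_{\bD}$. Accordingly the main obstacle is not a new idea but the careful bookkeeping of constants in this last, boundary, case so as to end with a constant depending only on $\alpha$ and $\beta$; for $\alpha>1/2$ the result is a routine specialisation of the machinery already established.
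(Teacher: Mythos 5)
The paper gives no proof of this statement: it is presented as the specialization of Theorem~\ref{variant-th-10-3} (via Corollary~\ref{analogue-th-10-variant}) to a finite index set, and your main construction --- embedding $\{0,1,\dots,n\}$ into $[0,1]$ at the points $\tau_i=U^{-1}\sum_{l\le i}u_l$, taking $\mu(\{\tau_i\})=u_i$, and identifying $L_u(X)$ with $M_n$ --- is exactly that intended derivation, including the sensible handling of ties and of the degenerate case $U=0$. There are, however, two points where your write-up does not actually deliver the displayed conclusion. First, Theorem~\ref{variant-th-10-3} yields $P(L_u(X)>\lambda)\le K\lambda^{-4\beta}\mu(T)^{2\alpha}$, whereas your final display (and the statement being proved) has $\lambda^{-2\beta}$. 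For $\lambda<1$ the bound with $\lambda^{-4\beta}$ is strictly weaker, so the $\lambda^{-2\beta}$ estimate does not follow from the cited theorem; the exponent $2\beta$ in the statement is almost certainly a typo for $4\beta$ (compare \eqref{analogue-10-16-variant} and Theorem 10.2 of \cite{billingsley99}), but as written you have silently substituted one exponent for the other, which is a gap rather than a proof of the claim as stated.

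Second, your treatment of the boundary case $\alpha=1/2$ does not work. You rightly note that Theorem~\ref{variant-th-10-3} requires $\alpha>1/2$, but the bisection argument you sketch does not close the gap: the superadditivity $\sum_b\bigl(\sum_{l\in b}u_l\bigr)^{2\alpha}\le\bigl(\sum_l u_l\bigr)^{2\alpha}$ controls the contribution of a single scale, while the recursion runs over roughly $\log_2 n$ scales, and it is precisely the strict inequality $2\alpha>1$ that turns the sum over scales into a convergent geometric series with an $n$-independent constant. At $2\alpha=1$ the same bookkeeping produces a factor of order $\log_2 n$, and no constant $K(\alpha,\beta)$ emerges; this is exactly why Theorem 10.2 of \cite{billingsley99}, of which the present statement is the $\bD$-valued analogue, assumes $\alpha>1/2$. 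The condition ``$\alpha\ge 1/2$'' in the statement should therefore also be read as a typo for ``$\alpha>1/2$''; if you wish to retain $\alpha=1/2$ you would need a genuinely different argument, not the sketch given, and I do not believe the claim is true in that generality.
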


We are now ready to state the criterion for existence of a process with sample paths in $\bD([0,1];\bD)$.

\begin{theorem}
\label{analogue-th-13-6}
Let $\{X(t)\}_{t \in [0,1]}$ be a collection of random elements in $\bD$ defined on the same probability space $(\Omega,\cF,P)$ such that:\\
a) there exist $\alpha >1/2$, $\beta\geq 0$ and a non-decreasing continuous function $F$ on $[0,1]$ such that for any $t_1,t_2,t_3 \in [0,1]$ with $t_1 \leq t_2 \leq t_3$ and for any $\lambda>0$,
$$P\big(\|X(t)-X(t_1)\| \wedge \|X(t_2)-X(t)\| \geq \lambda \big) \leq \frac{1}{\lambda^{4\beta}} [F(t_2)-F(t_1)]^{2\alpha};$$
b) for any $\e>0$,
$$\lim_{\delta \to 0} \limsup_{n\to \infty} P\big(w'\big(X(i/2^{n}), \delta\big)\geq \e \ \mbox{for some} \ 0\leq i\leq 2^{-n} \big)=0;$$
c) for any $t \in [0,1)$ and for any sequence $(t_n)_{n \geq 1}$ in $[0,1]$ with $t_n \to t$ and $t_{n+1} \leq t_n$ for any $n\geq 1$, $d_{J_1}^0\big(X(t_n),X(t)\big) \stackrel{p}{\to}0$.

Then, there exists a collection $\{Y(t)\}_{t \in [0,1]}$ of random elements in $\bD$ defined on the another probability space $(\Omega',\cF',P')$, such that the map $t \mapsto Y(\omega',t)$ is in $\bD([0,1];\bD)$ for any $\omega' \in \Omega'$, and the vectors $(X(t_1),\ldots,X(t_k))$ and $(Y(t_1),\ldots,Y(t_k))$ have the same distribution in $\bD^k$, for any $t_1,\ldots,t_k \in [0,1]$ and for any $k\geq 1$.
\end{theorem}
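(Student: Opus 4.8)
The plan is to obtain $Y$ as a weak limit in $\bD([0,1];\bD)$ of dyadic step approximations of $X$, following Billingsley's scheme for $\bR$-valued processes. For $n\ge 1$ put $D_n=\{i2^{-n}:0\le i\le 2^n\}$, let $\phi_n(t)=\lfloor 2^n t\rfloor 2^{-n}$ for $t\in[0,1)$ and $\phi_n(1)=1$, and define
$X_n(\omega)=\sum_{i=1}^{2^n}X(\omega,(i-1)2^{-n})1_{[(i-1)2^{-n},i2^{-n})}+X(\omega,1)1_{\{1\}}$, so that $X_n(\omega,t)=X(\omega,\phi_n(t))$. Each path $X_n(\omega,\cdot)$ is a step function with values in $\bD$, hence lies in $\bD([0,1];\bD)$; moreover $X_n=V_{\sigma_n}\circ(X(0),X(2^{-n}),\dots,X(1))$, where $\sigma_n$ is the dyadic partition of order $n$ and $V_{\sigma_n}$ is the map from Theorem~\ref{separating-th}, so $X_n$ is $\cF/\cD_{\bD}$-measurable. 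Let $P_n$ be its law on $\bD([0,1];\bD)$.

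I would show that $(P_n)_{n\ge 1}$ is tight using Theorem~\ref{analogue-13-8}, with condition (i) replaced by (i') via Corollary~\ref{analogue-corol-p140}. Since $\phi_n(t)=t$, hence $X_n(t)=X(t)$, for every dyadic $t$ and all large $n$, condition (i') holds with $T$ the set of dyadic rationals. As $\{X_n(\omega,t):t\in[0,1]\}=\{X(\omega,i2^{-n}):0\le i\le 2^n\}$, condition (ii') --- equivalently (ii) of Theorem~\ref{analogue-th-13-2} --- is precisely hypothesis b). For (iii'.a), note $w_{\bD}''(X_n,\delta)\le w_u''(X_n,\delta)$ by \eqref{dJ1-ineq}, and since $\phi_n$ is nondecreasing with $\phi_n(t_2)-\phi_n(t_1)\le t_2-t_1+2^{-n}$, for $2^{-n}<\delta$ one has $w_u''(X_n,\delta)\le L_u(X,2\delta;D_n):=\sup\{\|X(b)-X(a)\|\wedge\|X(c)-X(b)\|:a\le b\le c\ \text{in}\ D_n,\ c-a<2\delta\}$; hypothesis a) is exactly condition \eqref{analogue-10-15-variant} for $T=D_n$ with $\mu$ the discrete measure putting mass $F(i2^{-n})-F((i-1)2^{-n})$ at $i2^{-n}$, so Corollary~\ref{analogue-th-10-variant} bounds $P(L_u(X,2\delta;D_n)>\lambda)$ by a constant multiple of $\lambda^{-4\beta}[F(1)-F(0)]\,\omega_F(4\delta+2^{-n})^{2\alpha-1}$, where $\omega_F$ is the modulus of continuity of $F$; since $2\alpha-1>0$, letting $n\to\infty$ and then $\delta\to 0$ gives (iii'.a). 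For the boundary conditions (iii'.b)--(iii'.c) one uses $X_n(0)=X(0)$, $X_n(\delta)=X(\phi_n(\delta))$ with $\phi_n(\delta)\in\{0\}\cup(0,\delta]$, and $X_n(1-)=X(1-2^{-n})$, $X_n(1-\delta)=X(\phi_n(1-\delta))$ with $\phi_n(1-\delta)$ and $1-2^{-n}$ lying in $[1-2\delta,1)$ at distance $\le\delta$ for $2^{-n}<\delta$; the required bounds follow from hypothesis c) (which, via a monotone-subsequence argument, yields $\sup_{0<s\le\delta}P(d_{J_1}^0(X(s),X(0))\ge\e)\to 0$ as $\delta\to 0$) together with hypothesis a) and Corollary~\ref{analogue-th-10-variant}, which forbid the jumps of $X$ from accumulating near $0$ or near $1$.

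By the previous step and Theorem~\ref{prohorov-th}, some subsequence $(P_{n_k})$ converges weakly to a probability measure $Q$ on $(\bD([0,1];\bD),\cD_{\bD})$. Take $(\Omega',\cF',P')=(\bD([0,1];\bD),\cD_{\bD},Q)$ and $Y(\omega',t)=\pi_t^{\bD}(\omega')=\omega'(t)$, so that $t\mapsto Y(\omega',t)$ belongs to $\bD([0,1];\bD)$ by construction. Let $D'=D\cap T_Q$, where $D$ is the set of dyadic rationals; $D'$ contains $0$ and $1$, is dense, and has countable complement in $[0,1]$. For $t_1,\dots,t_k\in D'$ and $k$ large, $(X_{n_k}(t_1),\dots,X_{n_k}(t_k))=(X(t_1),\dots,X(t_k))$, so by Lemma~\ref{projection-conv} the common weak limit of $P_{n_k}\circ(\pi^{\bD}_{t_1,\dots,t_k})^{-1}$ gives $(Y(t_1),\dots,Y(t_k))\stackrel{d}{=}(X(t_1),\dots,X(t_k))$ for all $t_i\in D'$. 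Finally, for arbitrary $t_1<\dots<t_k$ in $[0,1]$ choose distinct $t_i^{(m)}\in D'$ with $t_i^{(m)}\downarrow t_i$ (taking $t_i^{(m)}=t_i$ when $t_i\in D'$, in particular for $t_i=1$): hypothesis c) gives $(X(t_1^{(m)}),\dots,X(t_k^{(m)}))\stackrel{d}{\to}(X(t_1),\dots,X(t_k))$ in $\bD^k$, while right-continuity in $J_1$ of the paths of $Y$ gives $(Y(t_1^{(m)}),\dots,Y(t_k^{(m)}))\to(Y(t_1),\dots,Y(t_k))$ a.s.\ in $\bD^k$; passing to the limit in the equality of laws valid for each $m$ yields $(Y(t_1),\dots,Y(t_k))\stackrel{d}{=}(X(t_1),\dots,X(t_k))$, which is the assertion.

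The main obstacle is the tightness of $(P_n)$, and within it the boundary conditions (iii'.b)--(iii'.c) of Theorem~\ref{analogue-13-8}: hypothesis a) controls only the \emph{minimum} of two consecutive increments of $X$, so extracting from it (and from c)) the control --- uniform in $n$ and in the double limit $\lim_{\delta\to 0}\limsup_n$ --- of the single increment $d_{J_1}^0(X(\phi_n(1-\delta)),X(1-2^{-n}))$ is delicate. What makes it work is that this increment lives in the shrinking window $[1-2\delta,1)$ and that the maximal inequality (Corollary~\ref{analogue-th-10-variant}) prevents the jumps of $X$ from clustering there; the analogous statement at $t=0$ is the easy one, and the remaining steps are routine adaptations of the $\bR$-valued arguments.
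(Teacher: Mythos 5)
Your overall strategy coincides with the paper's: approximate $X$ by the dyadic step processes $X_n(t)=X(\lfloor 2^nt\rfloor 2^{-n})$, prove tightness of their laws via Theorem \ref{analogue-13-8} (with the maximal inequality of Corollary \ref{analogue-th-10-variant} handling $(iii'.a)$ and hypothesis b) handling $(ii')$), extract a weak limit $Q$ by Prohorov, realize $Y$ as the coordinate process under $Q$, and identify the finite-dimensional distributions first on a dense set of dyadic points in $T_Q$ and then everywhere using hypothesis c) together with the right-continuity of the paths of $Y$. All of that matches the paper's \emph{Case 1} and is sound.

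The genuine gap is at the right endpoint, i.e.\ condition $(iii'.c)$ of Theorem \ref{analogue-13-8}. You must control $d_{J_1}^0\big(X_n(1-),X_n(1-\delta)\big)=d_{J_1}^0\big(X(1-2^{-n}),X(\phi_n(1-\delta))\big)$, a \emph{single} increment of $X$ between two points of $[1-2\delta,1)$. Hypothesis c) is of no help here (it concerns only right limits at points of $[0,1)$, and says nothing about the behaviour of $X(s)$ as $s\uparrow 1$), and hypothesis a) together with Corollary \ref{analogue-th-10-variant} bounds only the \emph{minimum} of two adjacent increments: it rules out two large oscillations in a short window but does not by itself bound one increment, so the assertion that these tools ``forbid the jumps of $X$ from accumulating near $1$'' is exactly the statement that needs proof, and none is given. (Your treatment of $(iii'.b)$ via a monotone-subsequence argument from c) at $t=0$ is fine; the two endpoints are not symmetric precisely because c) is a one-sided hypothesis.) The paper avoids this difficulty entirely by a preliminary reduction you are missing: it first treats the case where $P(X(h)=X(0))=P(X(1-h)=X(1))=1$ for all small $h$ (condition \eqref{cond-C}), under which $X_n(\delta)=X_n(0)$ and $X_n(1-)=X_n(1-\delta)=X(1)$ a.s.\ and both boundary conditions are trivial, and then deduces the general case by the deterministic time change $\widetilde X(t)=X(f(t))$ that flattens the process on $[0,\delta_0)$ and $(1-\delta_0,1]$, applying Case 1 to $\widetilde X$ and setting $Y(s)=\widetilde Y(\delta_0+(1-2\delta_0)s)$. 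To complete your proof you should either incorporate this two-case reduction or supply an actual argument (not available from the cited corollaries alone) that $\sup_{1-2\delta\le a\le b<1}$ of the relevant increment tends to $0$ in probability.
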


\noindent {\bf Proof:} We argue as in the proof of Theorem 13.6 of \cite{billingsley99}. We consider two cases.

{\em Case 1.} Suppose that there exists $\delta_0 \in (0,1/2)$ such that for all $h \in (0,\delta_0)$,
\begin{equation}
\label{cond-C}
P(X(0)=X(h))=1 \quad \mbox{and} \quad P(X(1)=X(1-h))=1.
\end{equation}

Let $T=\cup_{n\geq 1}T_n$, where $T_n=\{t_i^n;i=0,1,\ldots,2^n\}$ and $t_i^n=i/2^n$. For any $n\geq 1$, we define \begin{equation}
\label{def-Xn}
X_n(t)=X(t_i^n) \quad \mbox{for all}  \ t \in [t_i^n, t_{i+1}^n)
\end{equation}
for $i=0,1,\ldots,2^n-1$, and $X_n(1)=X(1)$. Note that
\begin{equation}
\label{t-in-Ak}
t \in T_{k} \ \mbox{implies that} \ X_n(t)=X(t) \ \mbox{for all}  \ n \geq k.
\end{equation}

We will prove that
\begin{equation}
\label{Xn-tight}
(P \circ X_n^{-1})_{n\geq 1} \ \mbox{is tight in} \ \bD([0,1];\bD).
\end{equation}
By Prohorov's theorem, it will follow that $(P \circ X_n^{-1})_{n\geq 1}$ is relatively compact in
$\bD([0,1];\bD)$. Hence, there exist a subsequence $(n_k)_{k\geq 1}$ and a probability measure $Q$ on
$(\bD([0,1];\bD), \cD_{\bD})$ such that $P_{n_k} \stackrel{w}{\to} Q$. Let $\{Y(t)\}_{t \in [0,1]}$ be a collection of random elements in $\bD$ with law $Q$, defined on the another probability space $(\Omega',\cF',P')$. For instance, we may take $(\Omega',\cF',P')=\big(\bD([0,1];\bD), \cD_{\bD},Q \big)$ and $Y(t)=\pi_{t}^{\bD}$ for all $t \in [0,1]$. Then  $(X(t_1),\ldots,X(t_k))$ and $(Y(t_1),\ldots,Y(t_k))$ have the same distribution in $\bD^k$, for any $t_1,\ldots,t_k \in A$, and the same thing remains true for arbitrary points $t_1,\ldots,t_k$ in $[0,1]$ due to hypothesis {\em c)} and the right continuity of the sample paths of $\{Y(t)\}_{t \in [0,1]}$ with respect to $J_1$, since for each $t_i \in [0,1),i=1,\ldots,k$ there exists a sequence $(t_i^m)_{m \geq 1} \subset A$ such that $t_i^m \downarrow t_i$ as $m \to \infty$.

It remains to prove \eqref{Xn-tight}. For this, we apply Theorem \ref{analogue-13-8} to $P_n=P \circ X_n^{-1}$. Condition $(ii')$ of this theorem is equivalent to condition $(ii)$ of Theorem \ref{analogue-th-13-2}, which is the same as our hypothesis {\em b)} (using definition \eqref{def-Xn} of $X_n(t)$).

We begin by checking condition $(iii')$ of Theorem \ref{analogue-13-8}. Let $\e>0$ and $\eta>0$ be arbitrary. We prove that there exist $\delta \in (0,1)$ and an integer $n_0 \geq 1$ such that for all $n \geq n_0$,
\begin{equation}
\label{cond-threeparts}
\left\{
\begin{array}{ll}
(a) & P(w_{\bD}''(X_n,\delta) \geq \e ) \leq \eta \\
(b) & P(d_{J_1}^{0}\big(X_n(\delta),X_n(0)\big) \geq \e ) \leq \eta \\
(c) & P(d_{J_1}^{0}\big(X_n(1-),X_n(1-\delta)\big) \geq \e ) \leq \eta.
\end{array}
\right.
\end{equation}
For part b), let $\delta<\delta_0$ and $n\geq 1$ be arbitrary. Choose $k$ such that $k/2^n \leq \delta <(k+1)/2^n$. Then $X_n(\delta)=X(k/2^n)=X(0)$ a.s. and $X_n(0)=X(0)$. Hence $d_{J_1}^{0}\big(X_n(\delta),X_n(0)\big)=0$ a.s.
For part c), let $\delta<\delta_0/2$ and $n\geq n_0$ where $n_0$ is such that $2^{-n_0}\leq \delta_0/2$. Choose $l$ such that $l/2^n \leq 1-\delta <(l+1)/2^n$.
Then $X_n(1-\delta)=X(l/2^n)=X(1)$ a.s. since $1-l/2^n=\delta+(1-\delta-l/2^n)<\delta+1/2^n<\delta_0$.
Since $\delta<\delta_0$ is arbitrary, this also shows that $X_n(1-)=X(1)$ a.s. for any $n\geq n_0$.
Hence, $d_{J_1}^{0}\big(X_n(1-),X_n(1-\delta)\big)=0$ a.s.

To prove part (a) of \eqref{cond-threeparts}, it suffices to show that $P(w_{u}''(X_n,\delta) \geq \e ) \leq \eta$ since $w_{\bD}''(x,\delta)\leq w_{u}''(x,\delta)$ for any $x \in \bD([0,1];\bD)$. This can be proved exactly as on page 144 of \cite{billingsley99}, by applying Corollary
\ref{analogue-th-10-variant} to the discrete-time process $\{Y_n(t)\}_{t \in T_n}$ given by $Y_n(t)=X_n(t)=X(t)$, and the measure $\mu_n$ on $T_n$ given by $\mu_n(\{t_i^n\})=F(t_i^n)-F(t_{i-1}^n)$. The process $Y_n$ satisfies hypothesis \eqref{analogue-10-15-variant} of Corollary \ref{analogue-th-10-variant}, due to our hypothesis {\em a)}.
Note that $w_u''(X_n,\delta) \leq L(Y_n,2\delta)$.

Finally, we prove that condition $(i)$ of Theorem \ref{analogue-th-13-2} holds. Let $\eta>0$ be arbitrary. We will prove that there exist $a>0$ and an integer $n_0 \geq 1$ such that
$$P(\|X_n\|_{\bD} \geq a) \leq 2\eta \quad \mbox{for all} \ n\geq n_0.$$
Let $\e>0$ be arbitrary. Choose $\delta \in (0,1)$ and $n_0 \geq 1$ such that part (a) of \eqref{cond-threeparts} holds. Choose $k\geq 1$ such that $2^{-k}\leq \delta$. We claim that for all $n\geq k$,
\begin{equation}
\label{bound-norm-Xn}
\|X_n\|_{\bD} \leq w_{\bD}''(X_n,\delta)+\max_{i\leq 2^{k}}\|X(i/2^k)\|.
\end{equation}
To see this, note that clearly $\|X_n(1)\| \leq \max_{i\leq 2^{k}}\|X(i/2^k)\|$. Let $t \in [0,1)$ be arbitrary.
Say $i/2^k \leq t <(i+1)/2^k$. We have two situations: $d_{J_1}^0\big(X_n(t),X_n(i/2^k) \big)$ is either smaller or larger than $d_{J_1}^0\big(X_n(t),X_n((i+1)/2^k) \big)$. We consider only the case when it is smaller, the other case being similar. By \eqref{d-equal-norm} and the triangle inequality in $\bD$, we have
$$\|X_n(t)\| \leq d_{J_1}^0\big(X_n(t),X_n(i/2^k) \big)+\|X_n(i/2^k)\| \leq w_{\bD}''(X_n,\delta)+\max_{i\leq 2^{k}}\|X_n(i/2^k)\|.$$
From \eqref{bound-norm-Xn} and part (a) of \eqref{cond-threeparts}, it follows that for all $n\geq n_0$,
\begin{align*}
P(\|X_n\|_{\bD}>a)&\leq P(w_{\bD}''(X_n,\delta)+\max_{i\leq 2^{k}}\|X(i/2^k)\|>a,w_{\bD}''(X_n,\delta) < \e)+P(w_{\bD}''(X_n,\delta)\geq \e)\\
&\leq  P(\max_{i\leq 2^{k}}\|X(i/2^k)\|>a-\e)+P(w_{\bD}''(X_n,\delta)\geq \e)\leq 2\eta,
\end{align*}
for all $a>a_0$ and some $a_0>\e$ large enough, since $\lim_{A \to \infty}P(\max_{i\leq 2^{k}}\|X(i/2^k)\|>A)=0$.

{\em Case 2.} In the absence of condition \eqref{cond-C}, let $\delta_0 \in (0,1/2)$ be arbitrary. For any $t \in [0,1]$, define $\widetilde{X}(t)=X(f(t))$ where
$$f(t)=\left\{
\begin{array}{ll} 0 & \mbox{if $t \in [0,\delta_0)$} \\
(t-\delta_0)/(1-2\delta_0) & \mbox{if $t \in [\delta_0,1-\delta_0]$} \\
1 & \mbox{if $t \in (1-\delta_0,1]$}
\end{array} \right.$$
Since the map $\phi:[\delta_0,1-\delta_0] \to [0,1]$ given by $\phi(t)=(t-\delta_0)/(1-2\delta_0)$ is a bijection, $X(s)=\widetilde{X}(\delta_0+(1-2\delta_0)s)$ for all $s \in [0,1]$. The process $\widetilde{X}$ satisfies hypotheses {\em a)}, {\em b)}, {\em c)} of the theorem, and also condition \eqref{cond-C}. Therefore, by {\em Case 1}, there exists a collection $\{\widetilde{Y}(t)\}_{t \in [0,1]}$ of random elements in $\bD$ defined on the another probability space $(\Omega',\cF',P')$, such that the map $t \mapsto \widetilde{Y}(\omega',t)$ is in $\bD([0,1];\bD)$ for any $\omega' \in \Omega'$, and the vectors $(\widetilde{X}(t_1),\ldots,\widetilde{X}(t_k))$ and $(\widetilde{Y}(t_1),\ldots,\widetilde{Y}(t_k))$ have the same distribution in $\bD^k$, for any $t_1,\ldots,t_k \in [0,1]$ and for any $k\geq 1$. We define $Y(s)=\widetilde{Y}(\delta_0+(1-2\delta_0)s)$ for all $s \in [0,1]$.
$\Box$

\end{document}